\documentclass{article}

\usepackage{amsthm}
\usepackage{amssymb}
\usepackage{amsfonts}
\usepackage{enumerate}
\usepackage{amsmath}
\usepackage{mathabx}
\usepackage[utf8]{inputenc}
\usepackage{tikz}

  \oddsidemargin 0.5in
  \evensidemargin 0.5in
  \topmargin -0.5in
  \textwidth5.5in
  \textheight 8.5in

\renewcommand{\l}{\lambda}
\renewcommand{\d}{\delta}

\newtheorem{thm}{Theorem}
\newtheorem{prop}[thm]{Proposition}

\newtheorem{lemma}[thm]{Lemma}

\newtheorem{question}[thm]{Question}

\theoremstyle{definition}
\newtheorem{example}[thm]{Example}

\title{Phase Transition for a Family of Complex-driven Loewner Hulls }
\author{Joan Lind and Jeffrey Utley}

\begin{document}

\maketitle

\begin{abstract}
Building on H. Tran's study of Loewner hulls generated by complex-valued driving functions, which showed the existence of a phase transition \cite{T},  
we answer the question of whether the phase transition for complex-driven hulls matches the phase transition for real-driven hulls.
This is accomplished through a detailed study of the Loewner hulls generated by driving functions $c\sqrt{1-t}$ and $c\sqrt{\tau + t}$ for $c \in \mathbb{C}$ and $\tau \geq 0$.
This family also provides examples of new geometric behavior that is possible for complex-driven hulls but prohibited for real-driven hulls.
\end{abstract}

\vspace{0.1in}

\noindent {\bf Keywords:} Loewner evolution $\cdot$ Loewner hulls

\vspace{0.1in}

\noindent {\bf 2020 Mathematics Subject Classification:} 30C35

\tableofcontents

\section{Introduction and results}
The Loewner differential equation forms the deterministic foundation for Schramm-Loewner evolution (SLE), which was
introduced by O.~Schramm in 2000 \cite{SLEintro} and led to breakthroughs in probability and statistical physics.  
Due to the significant role it plays in SLE, the  Loewner equation became a focus of study in its own right.  
Roughly, the Loewner equation provides a correspondence between certain families of 2-dimensional sets, such as curves in the plane, and real-valued functions.
The real-valued function (called the {\it driving function}) uniquely determines the growing family of sets (called {\it hulls}).

In \cite{T}, H.~Tran began the first study of the Loewner hulls generated by complex-valued driving functions.  
While many of the familiar properties from the real-valued setting carry over to the complex-valued setting, 
one immediately encounters differences.  
In particular, the Loewner maps $g_t$ no longer map into a standard domain,
but rather there are left hulls $L_t$ and right hulls $R_t$ so that
$$g_t : \mathbb{C} \setminus L_t \to \mathbb{C} \setminus R_t.$$

One of Tran's main results (stated below) is the complex-valued version of the result that the Loewner hulls generated by real-valued driving functions with small Lip$(1/2)$ norm are quasi-arcs \cite{MR, L}, 
which implies that they are simple curves.
Note that the Lip$(1/2)$ norm of $\l$ is the smallest $c$ such that 
$$ |\l(t) - \l(s)| \leq c |t-s|^{1/2}$$
for all $t,s$ in the domain of $\l$.

\begin{thm}[\cite{T}] \label{HuyTheorem}
There exists $\sigma >0$ so that when
 $\l:[0,T] \to \mathbb{C}$ has Lip$(1/2)$ norm less than $\sigma$,
then  $L_t = \gamma[-t,t]$ for a quasi-arc $\gamma: [-T, T] \to \mathbb{C}$.
Moreover, 
\begin{equation}\label{Lcurve}
\gamma(t) = \lim_{y \to 0^+} g_t^{-1}(\l(t)+iy) \;\; \text{ and } \;\; \gamma(-t) = \lim_{y \to 0^-} g_t^{-1}(\l(t)+iy).
\end{equation}
\end{thm}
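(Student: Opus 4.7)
My plan is to mimic the real-driven proof of Marshall–Rohde / Lind, adapted so that the absence of a real-boundary symmetry is handled by a two-sided argument. In the real case, one fixes a time $t$, runs the reverse Loewner flow started from a point near the driving function, and uses smallness of the Lip$(1/2)$ norm together with a Grönwall estimate on $\log f_s'$ to conclude both that a boundary limit exists and that the resulting curve is a quasi-arc. Here I would set up the analogous machinery on $\mathbb{C}$, but would run the flow on both sides of $\l(t)$ (producing $\gamma(t)$ on the $+iy$ side and $\gamma(-t)$ on the $-iy$ side), then glue.

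For existence of the limits in (\ref{Lcurve}), I would fix $t \in [0,T]$ and consider the reverse flow
$$\partial_s f_s(z) = \frac{-2}{f_s(z) - \l(t-s)}, \qquad f_0(z)=z,$$
whose time-$t$ value recovers $g_t^{-1}$ up to the obvious shift. Differentiating in $z$ gives an ODE for $f_s'(z)$ of the form $\partial_s \log f_s' = 2/(f_s-\l(t-s))^2$. Starting from $z = \l(t)+iy$ and using $\|\l\|_{1/2}<\sigma$ to control $|f_s-\l(t-s)|$ from below by a multiple of $\sqrt{s+y^2}$ (separately in the upper and lower half-planes), a standard Grönwall estimate yields a uniform bound on $|f_s'|$, which in turn forces $f_t(\l(t)+iy)$ to be Cauchy as $y \to 0^+$. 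The same argument handles $y \to 0^-$. Continuity of $\gamma$ in $t$ follows from essentially the same estimates applied to differences.

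To verify the quasi-arc property I would establish the three-point condition directly. For $-t \le s_1 < s_2 < s_3 \le t$, write each $\gamma(s_i)$ as the image under a common $g_t^{-1}$ of a suitable boundary point, and use the conformal distortion estimates for $g_t^{-1}$ inherited from the $|f_s'|$ bound above to show that $|\gamma(s_2)-\gamma(s_1)|+|\gamma(s_3)-\gamma(s_2)| \le K |\gamma(s_3)-\gamma(s_1)|$ with $K$ depending only on $\sigma$. Finally, $L_t = \gamma[-t,t]$ would follow from showing that this continuous curve has exactly the capacity-theoretic characterization of the hull generated by $\l|_{[0,t]}$.

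The main obstacle, and the point where the real-case proof does not transfer verbatim, is the coordination of the two sides. In the real setting one uses Schwarz reflection across $\mathbb{R}$ so that $f_t$ extends conformally across the relevant boundary, and the upper-half-plane estimate automatically controls the whole geometry. For complex $\l$, the driving function wanders off the real line and the left hull sits inside $\mathbb{C}$ with no canonical axis of symmetry: the $+iy$ and $-iy$ branches of the reverse flow produce genuinely different tips $\gamma(t)$ and $\gamma(-t)$, and one must ensure the estimates on the two branches match at $y=0$ well enough that the combined map $\gamma:[-T,T]\to \mathbb{C}$ is continuous at $0$ and satisfies a uniform three-point condition across the join. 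This forces the lower bound on $|f_s-\l(t-s)|$ to be carried out carefully on both half-planes and is where the specific threshold $\sigma$ enters.
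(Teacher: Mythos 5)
This statement is quoted from Tran's paper \cite{T}; the paper under review does not prove it, so there is no ``paper's own proof'' to compare against. I will therefore assess your sketch against what is actually required to carry out the Marshall--Rohde/Lind argument in the complex setting, which is indeed the general strategy Tran follows.

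Your outline is correct in spirit (reverse flow $f_s$, bound $|f_s'|$ via $\partial_s \log f_s' = 2/(f_s - \l(t-s))^2$, extract boundary limits, verify a three-point condition, run the construction on both sides of $\l(t)$ and glue). However, the step you treat as routine --- ``using $\|\l\|_{1/2}<\sigma$ to control $|f_s-\l(t-s)|$ from below by a multiple of $\sqrt{s+y^2}$ (separately in the upper and lower half-planes)'' --- is exactly where the real-valued proof breaks and a genuinely new idea is needed. In the real case this lower bound comes from the fact that $y_s := \mathrm{Im}\,f_s(z)$ equals $\mathrm{Im}\bigl(f_s(z)-\l(t-s)\bigr)$, and $y_s$ evolves by $\partial_s y_s^2 = 4y_s^2/|f_s - \l(t-s)|^2 > 0$, giving a monotone quantity that can be pushed down below $|f_s - \l(t-s)|$; the smallness of $\|\l\|_{1/2}$ then controls $|\mathrm{Re}\,f_s - \l(t-s)|$ relative to $y_s$. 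Once $\l$ is complex, $\mathrm{Im}\,f_s$ and $\mathrm{Im}(f_s - \l)$ differ by the wandering quantity $\mathrm{Im}\,\l(t-s)$, the trajectory $f_s(\l(t)\pm iy)$ need not stay in any half-plane, and $\mathrm{Im}\,f_s$ is simply not monotone. One can try tracking $|h_s|^2$ with $h_s = f_s - \l(t-s)$ directly, but $\partial_s |h_s|^2$ contains the term $-4\cos\bigl(2\arg h_s\bigr)$, which has no sign, so no Gr\"onwall estimate falls out for free. You do flag that the two sides must be coordinated, but the framing as an ``upper versus lower half-plane'' issue misses that the basic lower bound itself fails without a replacement for the $\mathrm{Im}\,f_s$ monotonicity. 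This is precisely where the admissible threshold $\sigma$ is determined and where Tran's argument departs from the real-valued template; filling it in is the substance of the theorem, not a technicality to be deferred.

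Two smaller remarks: the claim that $f_t$ solving the reverse IVP ``recovers $g_t^{-1}$ up to the obvious shift'' should be stated carefully, since in the complex setting the centering happens in $\mathbb{C}$ and one must keep track of which hull ($L_t$ or $R_t$) one lands in; and ``$L_t = \gamma[-t,t]$'' does not follow from a capacity characterization alone in the complex case --- one needs the left-hull/right-hull formalism of Section 2 to even state what the hull is.
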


The optimal value of $\sigma$ in this theorem is not known.  Tran's work shows that $ \sigma \geq 1/3 $, and we also know that $\sigma \leq 4$ from the real-valued case \cite{L}.   This suggests the following question:

\begin{question} \label{Q}
Is $\sigma = 4$, matching the optimal norm in the real-valued driving function case?
\end{question}

We show that the answer to this question is `no' and $\sigma < 3.723$.  
This is accomplished through a detailed study of the hulls generated by $c\sqrt{1-t}$ for $c \in \mathbb{C}$, which  
 extends the real-valued case computed in \cite{KNK}.

\begin{thm}\label{familyTHM1}
Let $c \in \mathbb{C} \setminus\{\pm 4\}$, let $\l(t) = c\sqrt{1-t}$ be the Loewner driving function,  
and define $\alpha = \frac{1}{2}\left[ 1 - \frac{c}{\sqrt{c^2-16}}\right]$.
\begin{enumerate}
    \item[(i)] When Re$(\alpha) > 0$, the left hull $L_t$ is a simple curve for all $t \leq 1$ (defined as in \eqref{Lcurve}).
   When $c \notin i\mathbb{R}$, the curve spirals infinitely around its two endpoints $\gamma(1), \gamma(-1)$ as $t \to 1$.
    \item[(ii)] When Re$(\alpha) < 0$, the left hull $L_t$ is a simple curve for $t<1$ (defined as in \eqref{Lcurve}), but $L_1$ consists of a simple closed curve and its interior.
    \item[(iii)]  When Re$(\alpha) = 0$, there is a time $t_c < 1$ so that the left hull $L_t$ is a simple curve for $t<t_c$ (defined as in \eqref{Lcurve}),
    $L_t$ is a non-simple curve for $t_c \leq t <1$,
    and $L_1$ consists of a curve and its interior.
\end{enumerate}

\end{thm}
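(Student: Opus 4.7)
The plan is to exploit the self-similarity of $\lambda(t) = c\sqrt{1-t}$ to reduce the complex Loewner PDE to an autonomous complex ODE whose phase portrait controls all three cases. Setting $s = -\log(1-t)$ and
$$F(s,z) \;=\; \frac{g_t(z) - \lambda(t)}{\sqrt{1-t}},$$
a short computation using $\lambda'(t) = -c/(2\sqrt{1-t})$ yields
$$\partial_s F \;=\; \frac{F^2 + cF + 4}{2F} \;=\; \frac{(F - F_+)(F - F_-)}{2F}, \qquad F(0,z) = z - c,$$
with fixed points $F_\pm = (-c \pm \sqrt{c^2-16})/2$. A direct linearization gives eigenvalues $1/(2\alpha)$ at $F_+$ and $1/(2(1-\alpha))$ at $F_-$; with the theorem's branch, the stability of each fixed point is controlled by the sign of $\text{Re}(\alpha)$. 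Integrating the ODE yields the conserved quantity
$$(F(s)-F_+)^{\alpha}\,(F(s)-F_-)^{1-\alpha} \;=\; e^{s/2}\,(F(0)-F_+)^{\alpha}\,(F(0)-F_-)^{1-\alpha},$$
which gives an explicit parametrization of every trajectory.

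Near the singularity $F=0$, multiplying the ODE by $2F$ gives $\partial_s F^2 \to 4$ as $F \to 0$, so any trajectory reaching the singular point does so in finite time, approaching along $\pm i$. Using \eqref{Lcurve} and solving $g_t(z) = \lambda(t) + iy$ via $F(s,z) = iy/\sqrt{1-t}\to 0^{+i}$, I would identify the two branches of the trace curve with backward flows from the singular point:
$$\gamma(t) = c + \Psi_s(0^{+i}), \qquad \gamma(-t) = c + \Psi_s(0^{-i}),$$
where $\Psi_s$ is the time-$s$ backward flow. The conservation law specializes at $F=0$ to
$$(-F_+)^{\alpha}(-F_-)^{1-\alpha} \;=\; e^{s/2}\,\bigl(\gamma(\pm t) - c - F_+\bigr)^{\alpha}\,\bigl(\gamma(\pm t) - c - F_-\bigr)^{1-\alpha},$$
an implicit equation for $\gamma(\pm t)$ where the two branches correspond to the two admissible determinations of the complex power.

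In case (i) ($\text{Re}(\alpha) > 0$), both $F_\pm$ are backward-attracting, so each branch $\Psi_s(0^{\pm i})$ converges to one of them, giving the endpoints $\gamma(\pm 1) = (c \pm \sqrt{c^2-16})/2$. Non-self-intersection for finite $s$ and disjointness of the two branches would be obtained by a monotonicity argument using the explicit invariant, together with the contracting nature of the linearization at each endpoint. The spiraling statement then reduces to $\text{Im}(1/(2\alpha)) \neq 0$, which fails exactly when $\alpha \in \mathbb{R}$, i.e.\ when $c \in i\mathbb{R}$. In case (ii) ($\text{Re}(\alpha) < 0$), only one of the fixed points is backward-stable, so both branches converge to the same point and close up; the interior of $L_1$ is filled because the complement of the closed backward curve is precisely the basin of the forward sink, so every interior $z$ has its forward trajectory hit $F=0$ in finite time.

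Case (iii) sits on the boundary $\text{Re}(\alpha) = 0$, where the eigenvalues become purely imaginary and the linearized backward flow near $F_\pm$ is rotational; I would identify $t_c$ as the first time the backward trajectory from $0^{+i}$ meets that from $0^{-i}$, using the implicit equation above, and then continue as in case (ii) to obtain the filled closure at $t=1$. The chief obstacle throughout is the \emph{global} phase-portrait analysis: ruling out self-intersection up to the correct time in case (i), identifying $t_c$ exactly in case (iii), and characterizing the filled interior in case (ii) all require quantitative control of the winding and angular motion of the backward trajectory that goes well beyond the local linearization at $F_\pm$.
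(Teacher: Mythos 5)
Your reduction to the autonomous ODE $\partial_s F = (F^2 + cF + 4)/(2F)$ cleanly re-derives the paper's implicit solution (Lemma \ref{ImplicitSolutionLemma}); the conserved quantity is \eqref{TipEquation} in disguise (with $F_+ = -B$, $F_- = -A$, $F(0)=z-c$), and the eigenvalue $1/(2\alpha)$ correctly packages the spiraling rate. But the gaps you flag at the end are where essentially all of the proof lives, and the plan does not supply them. The ODE describes only the \emph{candidate} trace; it does not show $L_t = \gamma[-t,t]$, nor that $\gamma$ is injective before time $1$. The paper proves this (Proposition \ref{simplecurve1}) by combining Tran's quasi-arc theorem with the concatenation property and a contradiction argument: if the trace closed a loop, Brownian scaling plus substituting $z_r = c$ into \eqref{TipEquation} yields $\log\sqrt{1-r} = 2\pi i(p\alpha + q\beta)$ with winding numbers $p,q\in\{-1,0,1\}$, whose imaginary part forces $p=q=0$ when $\mathrm{Re}(\alpha)\neq 0$ and whose real part then gives $r = 0$. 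Your ``monotonicity argument using the explicit invariant'' is not spelled out, and linearization at $F_\pm$ controls nothing near the singular point $0$ from which both branches emanate, which is exactly where a self-intersection would have to occur.

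Moreover, in case (i) \emph{both} fixed points are backward-attracting (since $\mathrm{Re}(\beta)\ge 1/2$ always by Lemma \ref{parameterlemma}, $F_-$ is always a backward sink), so you still need the two branches $\Psi_s(0^{\pm i})$ to converge to \emph{different} fixed points; a priori both could spiral into the same one, collapsing case (i) into case (ii). The paper settles this with a holomorphic-motion argument (Lemma \ref{CurveAtTime1}) showing $\gamma_c(\pm 1)$ is continuous in $c$, then importing the known answer from the real case $c\in(0,4)$ \cite{KNK}; nothing in your sketch determines which branch goes where. Your claim that $\mathrm{Int}(\gamma)$ is the forward basin of $F_+$ also needs justification --- trajectories could in principle escape to $\infty$, which is another fixed point of the ODE --- and you never rule out $L_1$ acquiring points beyond $\gamma[-1,1]$ and its interior. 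The paper handles that via Lemma \ref{accessiblepoints} (at most two $1$-accessible points and a simply connected complement), proved through the $c\sqrt{t}$ duality; this is the step that upgrades ``the trace is such-and-such curve'' to ``$L_1$ \emph{equals} such-and-such set,'' and its absence is the most fundamental gap in the proposal.
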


\begin{figure}
\centering
\includegraphics[scale=0.6]{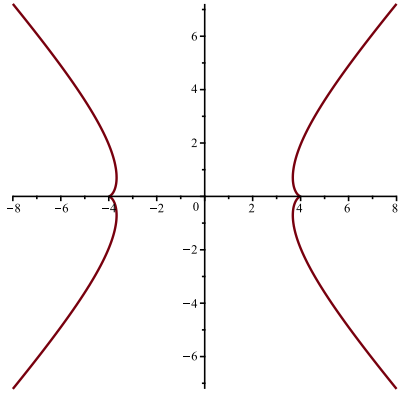}
\caption{
The values of $c \in \mathbb{C} \setminus \{ \pm 4 \}$ satisfying Re$(\alpha) =0$. 
}\label{ReAlphaIs0}
\end{figure}

In Figure \ref{ReAlphaIs0}, we show the values of $c\in \mathbb{C} \setminus \{ \pm 4 \}$ with Re$(\alpha) =0$.  Since the minimal value of $|c|$ on these curves is approximately 3.722, Theorem \ref{familyTHM1} provides our answer to Question \ref{Q}.
Figure \ref{Examples} shows examples of the Re$(\alpha) >0$ case and the Re$(\alpha) <0$ case from Theorem \ref{familyTHM1}.
In both images, the blue portion of the curve corresponds to $\gamma(t)$ for $t>0$, and the red portion corresponds to $\gamma(t)$ for $t<0$.  
On the left, $L_1$ is a simple curve  that spirals around its endpoints $A,B$ when $c=3.31+1.15i$.  
On the right, we see that $L_1$ is not simple curve for $c=5+2i$.  Rather as $t \to 1$, we have that $\gamma(t), \gamma(-t)$ both approach $B$ creating a simple loop.  The interior of this loop is also part of the time-1 hull.

\begin{figure}
\centering
\includegraphics[scale=0.53]{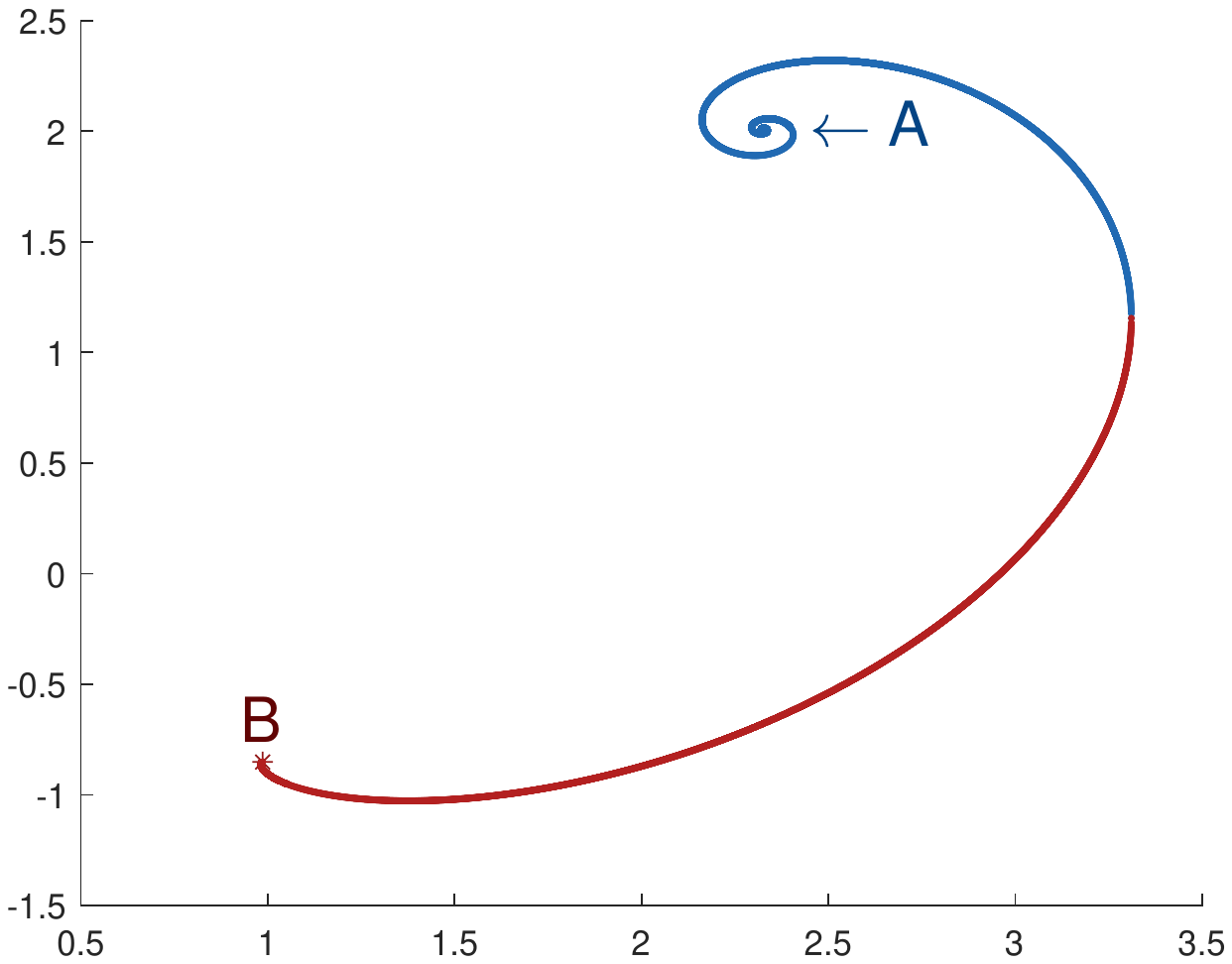}
\hfill
\includegraphics[scale=0.53]{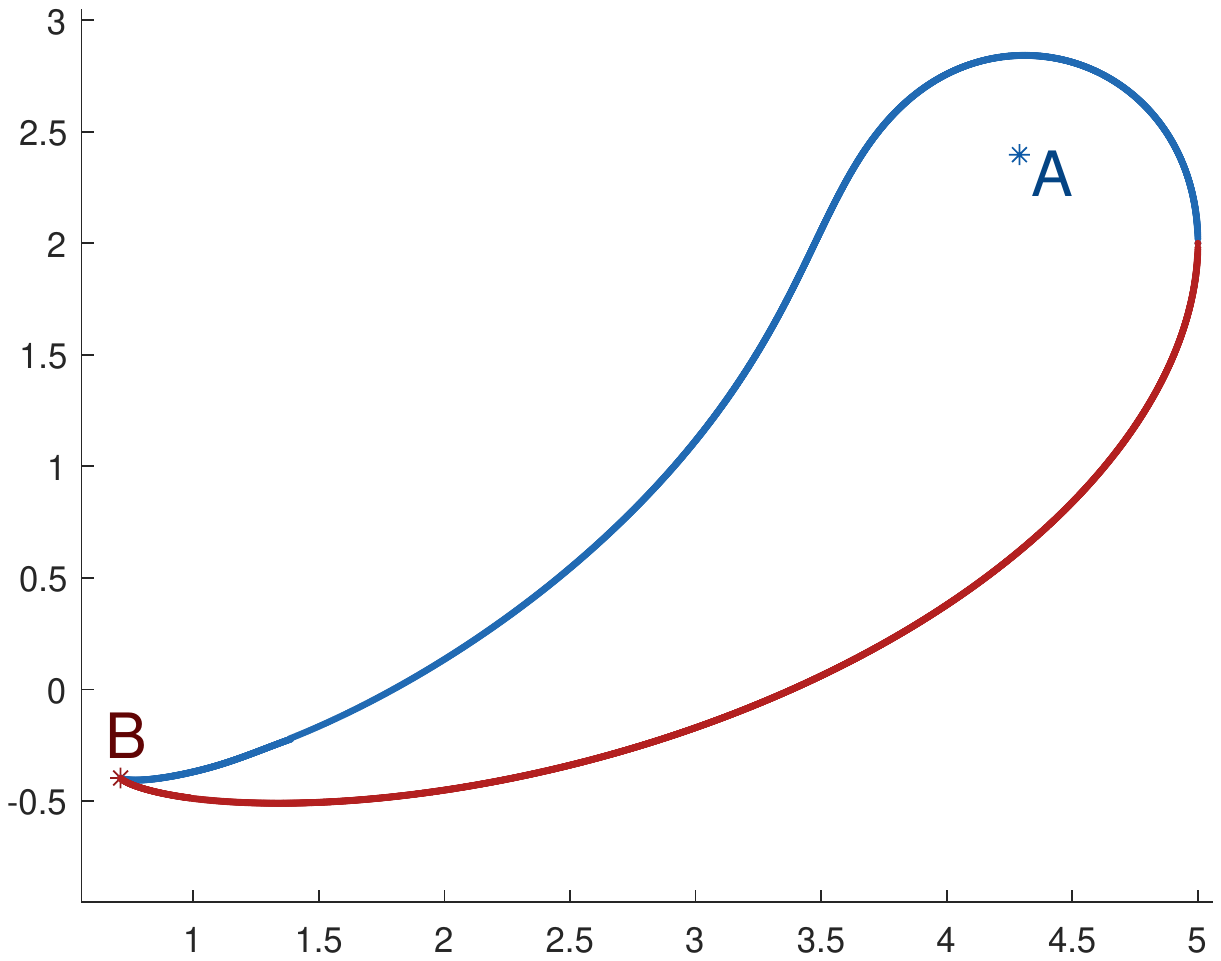}
\caption{Left: The left hull $L_1$ generated by $(3.31+1.15i)\sqrt{1-t}$ is a simple curve spiraling around its endpoints $A,B$.
Right: The left hull $L_1$ generated by $(5+2i)\sqrt{1-t}$ is the simple closed curve and its interior.
}\label{Examples}
\end{figure}

The third case of Theorem \ref{familyTHM1} is intriguing, because this behavior  is not possible in the real-valued driving function setting.  
Roughly, the left hull begins as a simple curve with two growing ends, and then at some time $t_c<1$, one end hits back on itself at $c$, stopping its growth, while the other end continues to grow for $t \in (t_c, 1]$.  In the range $[t_c, 1)$ the domain of the Loewner map is no longer connected, but contains two components.  
See Figure \ref{ReAlphaZeroExample}.

We also analyze the left hulls generated by driving functions $c\sqrt{\tau+t}$ for $c \in \mathbb{C}$ and $\tau\geq0$.

\begin{thm}\label{familyTHM2}
Let $c \in \mathbb{C} \setminus\{\pm 4i\}$, let $\tau \geq 0$, let $\l(t) = c\sqrt{\tau+t}$ be the Loewner driving function,  
and define $\delta = \frac{1}{2}\left[1-\frac{c}{\sqrt{c^2+16}}\right]$.
\begin{enumerate}
    \item[(i)] When $\tau =0$ and Re$(\delta) > 0$, the left hull $L_t$ is the union of two line segments emanating from the origin.
    \item[(ii)] When $\tau =0$ and Re$(\delta) \leq 0$, the left hull $L_t$ is one line segment emanating from the origin.
    \item[(iii)] When $\tau >0$ and Re$(\delta) \neq 0$, the left hull $L_t$ is a simple curve (defined as in \eqref{Lcurve}).
    \item[(iv)] When $\tau >0$ and Re$(\delta) = 0$, for $t$ large enough the left hull $L_t$ is a non-simple curve.
\end{enumerate}

\end{thm}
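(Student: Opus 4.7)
The plan is to exploit the self-similarity of $\lambda(t) = c\sqrt{\tau+t}$ by changing variables in the Loewner equation, paralleling the treatment of $c\sqrt{1-t}$ used for Theorem \ref{familyTHM1}. Setting $u = \sqrt{\tau+t}$, $\phi = g_t/u$, and $s = \log u$ converts
$$\partial_t g_t(z) = \frac{2}{g_t(z) - c\sqrt{\tau+t}}$$
into the autonomous separable ODE
$$\frac{d\phi}{ds} = -\frac{(\phi-\phi_+)(\phi-\phi_-)}{\phi-c},$$
where $\phi_\pm = \tfrac{1}{2}(c\pm\sqrt{c^2+16})$ satisfy $\phi_+ + \phi_- = c$ and $\phi_+\phi_- = -4$. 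Integrating via partial fractions, whose weights are exactly $\delta$ and $1-\delta$, yields the implicit conservation law
$$F(g_t(z),\sqrt{\tau+t}) = F(z,\sqrt\tau),\qquad F(w,u) := (w-u\phi_+)^\delta(w-u\phi_-)^{1-\delta},$$
on an appropriate Riemann surface. The tips $\gamma(\pm t) = \lim_{y\to 0^\pm} g_t^{-1}(\lambda(t)+iy)$ then arise as preimages under $F(\cdot,\sqrt\tau)$ of the family $F(\lambda(t)+iy,\sqrt{\tau+t}) = (u\phi_-+iy)^\delta(u\phi_++iy)^{1-\delta}$ with $u = \sqrt{\tau+t}$.

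For $\tau = 0$, after fixing branches the formula reduces to $F(g_t(z),\sqrt t) = z$, so by scaling $L_t = \sqrt t\, L_1$. The hull is therefore a union of rays from the origin, and cases (i) and (ii) reduce to deciding whether $\gamma(1)$ and $\gamma(-1)$ are distinct. The two tips correspond to the two local branches of $F^{-1}(\cdot,1)$ at the critical value $F(0,1)$, continued analytically along $y \mapsto (\phi_-+iy)^\delta(\phi_++iy)^{1-\delta}$ as $y\to 0^\pm$. I would compute the ratio $\gamma(1)/\gamma(-1)$ as a monodromy factor determined by the two fractional powers, using $\phi_+\phi_- = -4$ to relate $\arg\phi_+$ and $\arg\phi_-$, and check that this factor is trivial precisely when Re$(\delta)\leq 0$, collapsing two potential rays into one and giving case (ii); otherwise one obtains the two distinct line segments of case (i).

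For $\tau > 0$, the map $F(\cdot,\sqrt\tau)$ is a genuine branched cover with branch points at $\sqrt\tau\,\phi_\pm$ and a unique critical point $w_\ast = c\sqrt\tau = \lambda(0)$, whose critical value $\sqrt\tau\,\phi_-^\delta\phi_+^{1-\delta}$ coincides with the initial target as $t\to 0^+$. As $t$ grows the target moves radially away from this value, and $\gamma(\pm t)$ are the two analytic continuations from $w_\ast$ of the inverse branches of $F^{-1}$ along this ray. Short-time simplicity is automatic from Theorem \ref{HuyTheorem}, since $c\sqrt{\tau+t}$ is Lipschitz and therefore has small Lip$(1/2)$ norm on any interval $[0,\varepsilon]$ when $\tau>0$. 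For case (iii), when Re$(\delta)\neq 0$ I would use a winding argument around the branch points $\sqrt\tau\,\phi_\pm$ to show that the two branches separate at nonzero angle and extend globally without colliding, keeping $L_t$ a simple curve for all $t$. For case (iv), when Re$(\delta)=0$ the target ray is aligned with a critical direction of $F$, which forces the two branches to re-meet for $t$ sufficiently large and produces a self-intersection of $\gamma$.

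The principal technical obstacle is the global bookkeeping of branches of $F$ and $F^{-1}$: since $\delta$ is generically complex, $F$ lifts to an infinitely-sheeted cover of the $w$-plane, and the correct branches are those dictated by analytic continuation along the Loewner flow rather than any a priori principal-branch convention. The degenerate case $c=0$, where $F(w,u)$ simplifies to $\sqrt{w^2-4u^2}$ yet requires the Loewner branch of the square root rather than the standard one in order to recover the correct two-segment hull $\{iy : |y|\leq 2\sqrt t\}$, illustrates how easily a naive branch choice can give a wrong answer. The key computation will relate the monodromy of $F^{-1}$ at the critical point $w_\ast$ to Re$(\delta)$, so that the geometric dichotomy of cases (iii) versus (iv) emerges directly from the ODE analysis.
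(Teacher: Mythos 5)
Your derivation of the implicit solution and the reduction to counting tips via analytic continuation of $F^{-1}$ at $c$ follows the paper's line of attack, and your observation that short-time simplicity for $\tau>0$ comes free from Theorem \ref{HuyTheorem} is also the paper's starting point. However, there is a genuine gap in your treatment of cases (i)--(ii), and (iv) is left too vague to count as a proof.

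For $\tau=0$, you propose to compute the ratio $\gamma(1)/\gamma(-1)$ as a monodromy factor and ``check that this factor is trivial precisely when $\mathrm{Re}(\delta)\le 0$.'' That claim is false: the factor is $e^{\pm 2\pi i\delta}$, which is generically nontrivial and non-real when $\mathrm{Re}(\delta)<0$. The correct dichotomy is not whether the factor is trivial but whether the loop producing it is \emph{available}: a second tip exists only if one can draw a loop through $c$ lying in $(\mathbb{C}\setminus R_1)\cup\{c\}$ that encircles exactly one of the branch points $D,E$. Since the branch-cut set of $g_1^{-1}$ must sit inside $R_1$, this is a statement about the topology of $R_1$, and the paper determines it by duality: $R_{1,c\sqrt{t}}$ is a rotation of $L_{1,-ic\sqrt{1-t}}$, and Theorem \ref{familyTHM1} (already proved) tells you this is a simple arc with distinct endpoints $D,E$ when $\mathrm{Re}(\delta)>0$ (so a separating loop exists, giving two segments), but a Jordan curve plus its interior containing both $D$ and $E$ when $\mathrm{Re}(\delta)<0$ (so no separating loop, one segment). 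Only when $\mathrm{Re}(\delta)=0$ does the factor itself collapse the count, since $e^{\pm 2\pi i\delta}=e^{\mp 2\pi\,\mathrm{Im}(\delta)}$ is a positive real and both tips lie on one ray. Your plan, read literally, would predict the wrong segment count for $\mathrm{Re}(\delta)<0$, because it tries to read the answer off $\delta$ alone without the topological input from Theorem \ref{familyTHM1}.

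For $\tau>0$, the paper's proof of (iii) is not a ``winding argument around the branch points'' in the sense you describe; it is the concatenation-plus-induction argument of Proposition \ref{simplecurve1}, in which the implicit formula is used by contradiction to rule out a self-intersection (there is a winding-number step buried in that contradiction, but the scaffolding is the concatenation property and Theorem \ref{HuyTheorem} on short subintervals, without which the local-to-global passage in the complex setting does not go through). For (iv) your suggestion that ``the target ray is aligned with a critical direction of $F$, which forces the two branches to re-meet'' is a heuristic, not an argument; the paper instead deduces (iv) by duality from the transitional ($\mathrm{Re}(\alpha)=0$) analysis of the $c\sqrt{1-t}$ hulls, where one shows the upper arm returns to $c$ at an explicit time $t_c<1$, forcing $R_t$ (hence the dual $L_t$) to contain a loop for $t$ large. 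You would need to supply a concrete mechanism of this kind to close case (iv).
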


The fourth case represents another interesting difference between the complex setting and real setting.  In particular, we have a differentiable driving function that does not generate a simple curve hull, something that is not possible in the real setting. 
As we will see in our work, there are more challenges in the complex setting with turning local results into global results using the concatenation property.

So far, we have not mentioned the right hulls $R_t$.  
However, by the duality property (described in Section 2),
the right hulls generated by $c\sqrt{1-t}$ are related to the left hulls generated by driving functions of the form
$\hat{c}\sqrt{\tau+t}$, and vice versa.  
Thus taken together, Theorems \ref{familyTHM1} and \ref{familyTHM2} give us a complete understanding of the right and left hulls for both families of driving functions.

In fact, the Loewner flow gives a dynamic way to change $L_t$ into $R_t$.  
In our context, this means that we can morph the hull $L_1$ driven by $c\sqrt{1-t}$ (i.e. a Theorem \ref{familyTHM1} type hull) into $R_1$ which equals a rotated left hull driven by $-ic\sqrt{t}$ (i.e. a Theorem \ref{familyTHM2}(i)-(ii) type hull).
The intermediate hulls that we see along the way are a union of scaled Theorem \ref{familyTHM1} type hulls and rotated Theorem \ref{familyTHM2}(iii)-(iv) type hulls.
This viewpoint, illustrated in Figure \ref{FlowExample}, ties together all the hulls of Theorem \ref{familyTHM1} and Theorem \ref{familyTHM2}.

\begin{figure}
\centering
\includegraphics[scale=0.195]{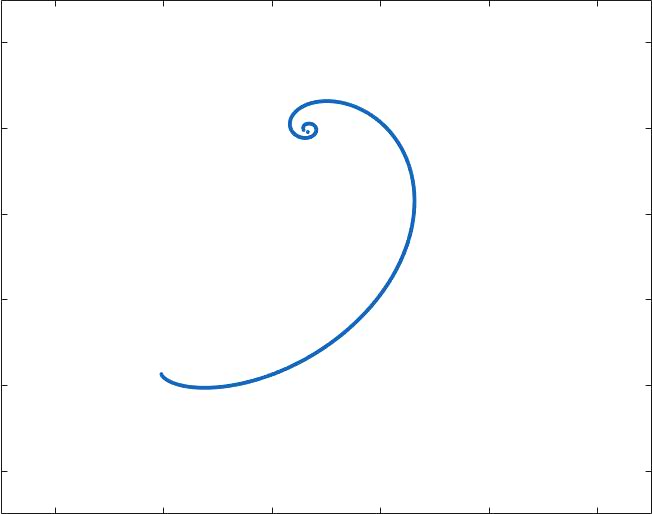}
\includegraphics[scale=0.195]{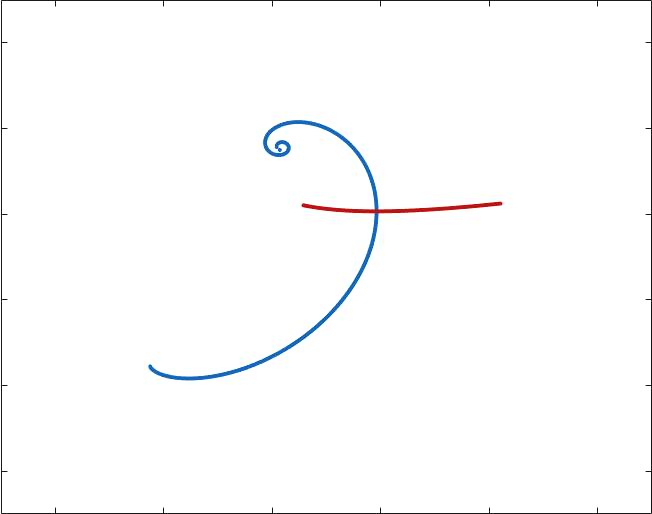}
\includegraphics[scale=0.195]{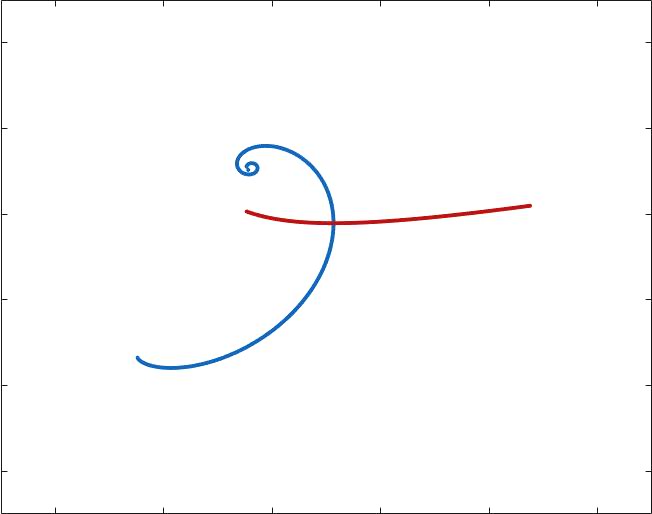}

\includegraphics[scale=0.195]{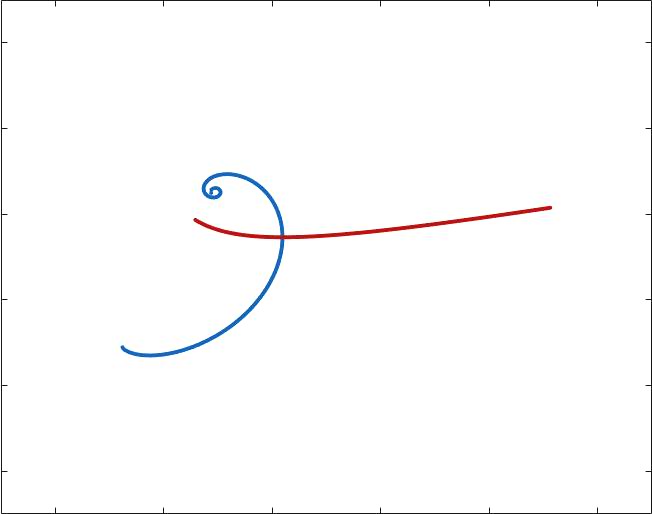}
\includegraphics[scale=0.195]{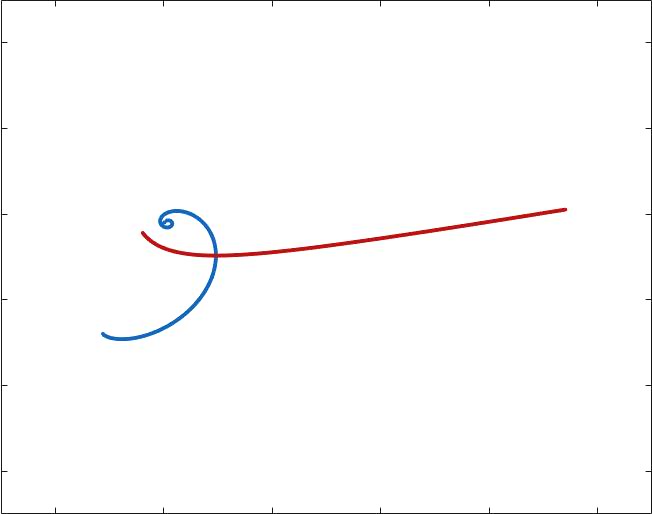}
\includegraphics[scale=0.195]{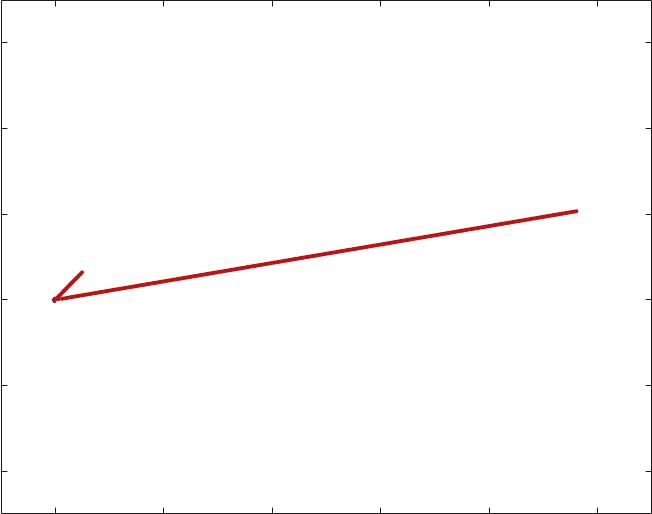}

\caption{The Loewner flow changes $L_1$ driven by $(3.31+1.15i)\sqrt{1-t}$ (top left image) into $R_1$ (bottom right image).  
The intermediate images show  
$g_t(L_1 \setminus L_t)$ in blue and  $R_t$ in red for $t= 0.196, 0.392, 0.588, 0.784$.
}\label{FlowExample}
\end{figure}

We give a brief description of our approach and organization.
Section \ref{background} contains background information on the complex Loewner equation.
In Section \ref{OneMinusT}, we study the left hulls generated by driving functions $c\sqrt{1-t}$, focusing on the cases away from the phase transition. 
We begin by calculating an implicit solution for the Loewner map $g_t$, following the method of \cite{KNK}.
Then using Tran's Theorem \ref{HuyTheorem} and the concatentation property, we show that the left hulls $L_t$ are simple curves before time 1.  
We use the implicit solution and a holomophic motion argument to analyze the behavior of these curves as time approaches 1. 
Finally, we determine the time-1 hulls $L_1$.
In Section \ref{tauplust}, we focus on the left hulls generated by driving functions $c\sqrt{\tau+t}$.  We calculate an implicit solution, and then separately address the cases when $\tau >0$ and when $\tau =0$.
The last section contains a discussion of the hulls associated with the phase transition, i.e. the cases Theorem \ref{familyTHM1}(iii) and Theorem \ref{familyTHM2}(iv).

When the driving function is $c\sqrt{t}$, our computation in Section \ref{tauplust} yields an explicit solution for the inverse Loewner map $g_t^{-1}$.  
By using this solution along with
 the well-known algorithm suggested by D. Marshall and S. Rohde in \cite{MR} for approximating the hulls of real-valued driving functions,
we created a program to simulate the hulls for complex-valued driving functions.
In particular, the images in Figures \ref{Examples} and \ref{FlowExample}  were generated from this program.

\section{Background}\label{background}

We begin by briefly reviewing  notation, terminology, and results associated with the Loewner equation driven by complex-valued functions from \cite{T}.

For $\l : [0,T] \to \mathbb{C}$, the complex Loewner equation is the following IVP:
\begin{equation}\label{cle}
\partial_t g_t(z) = \frac{2}{g_t(z) - \l(t)}, \;\;\; g_0(z) = z.
\end{equation}
The lifetime of a point $z$ is 
\begin{equation*}
T_z  = \sup \{ t \, : \,  g_t(z) \neq \l(t) \},
\end{equation*}
with $T_{\l(0)}=0$ and $T_z = \infty$ if $g_t(z) \neq \l(t)$ for all $t \in [0,T]$.
When $T_z < \infty$, we will say that $z$ is captured by $\l$ at time $T_z$.
The left hull $L_t$ and right hull $R_t$ are defined as
$$ L_t = \{ z \in \mathbb{C}  \, : \, T_z \leq t \}
\;\;\; \text{ and } \;\;\; R_t = \mathbb{C} \setminus g_t(\mathbb{C} \setminus L_t).$$
Then  $g_t$ is a conformal map from $\mathbb{C} \setminus L_t$ onto $\mathbb{C} \setminus R_t$. 
Further, by setting $g_t(\infty) = \infty$, the map $g_t$ can be extended to be conformal in a neighborhood of infinity. 

When we need to show the dependence of  the hulls on the driving function $\l$, we will use the notation $L_{t, \l}$ and $R_{t, \l}$.  
With this notation, we can state the following important properties of the hulls:
\begin{itemize}
\item Translation Property:  For $a \in \mathbb{C}$, then 
$$L_{t, \l +a} = a + L_{t,\l} \;\;\; \text{ and } \;\;\; 
R_{t, \l +a} = a + R_{t,\l}.$$

\item Scaling Property:  For $a >0$, then 
$$L_{t, a\l(\cdot/a^2)} = a L_{t/a^2, \l} \;\;\; \text{ and } \;\;\;
R_{t, a\l(\cdot/a^2)} = a R_{t/a^2, \l}$$
where the notation $\l(\cdot/a^2)$ indicates the map $s \mapsto \l(s/a^2)$.

\item Reflection Property:  Let Ref$_\mathbb{R}(K)$ be the reflection of a set $K$ over the real axis and Ref$_{i\mathbb{R}}(K)$ be the reflection over the imaginary axis.  
Then 
$$L_{t, \overline{\l}} = \text{Ref}_\mathbb{R}(L_{t,\l}),   \;\;\; L_{t, -\overline{\l}} = \text{Ref}_{i\mathbb{R}}(L_{t,\l}), \;\;\; \text{ and } \;\;\;
L_{t, -\l} = - L_{t,\l}.$$ 
(Note that the third property is a consequence of the first two.)
The same three properties also hold for the right hull.

\item Concatenation Property:  
$$L_{t+s, \l} = L_{t, \l} \cup g_t^{-1} \left( L_{s, \l(t+\cdot)} \setminus R_{t, \l} \right) \;\;\; \text{ and } \;\;\;
g_t\left( L_{t+s, \l} \setminus L_{t,\l} \right) = L_{s, \l(t+ \cdot)} \setminus R_{t, \l}.$$

\item Duality Property:  
$$L_{t, \l} = i R_{t, -i\l(t-\,\cdot)} \;\;\; \text{ and } \;\;\; R_{t, \l} = i L_{t, -i\l(t- \, \cdot)}.$$
\end{itemize}

When $\l$ is real-valued, the first reflection property shows that $L_t$ is symmetric with respect to $\mathbb{R}$.  
Thus, the hull $L_t$ driven by a real-valued function can be understood by simply studying  $K_t = L_t \cap \mathbb{H}$, which is the well-studied chordal Loewner hull in $\mathbb{H}$ driven by $\l$.
The right hull in the real-valued case will always be an interval of the real line.

The translation, scaling, and reflection properties are natural extensions of these well-known properties for the real-valued driving function case.  The concatenation property looks and behaves slightly different than its real-valued counterpart, due to the reference to the right hull $R_{t, \l}$.  Because of this difference, we wish to spend a little time discussing and justifying it.

\begin{proof}[Proof of Concatenation Property]
We begin by proving the second statement.  
By definition,
\begin{equation*}
L_{t+s, \l} \setminus L_{t, \l} = 
\{ z \in \mathbb{C}  \, : \, t < T_z \leq t+s \}.
 \end{equation*}
In other words, these are the points that have survived up until time $t$ but will be captured by time $t+s$.  
 Let $z \in L_{t+s, \l} \setminus L_{t, \l}.$  
 Since $z$ is not captured by time $t$,  $g_t(z)$ is a well-defined point in the range of $g_t$, implying that  $g_t(z) \notin R_t$.  
 Since $z$ is captured by time $t+s$, we must have that $g_t(z)$ is captured by time $s$ by the time-shifted driving function $\l(t+ \cdot)$, or equivalently $g_t(z) \in L_{s, \l(t+\cdot)}$.
 Thus $g_t(z) \in L_{s, \l(t+\cdot)} \setminus R_{t,\l}$, showing that
 $g_t(L_{t+s, \l} \setminus L_{t, \l} ) \subset L_{s, \l(t+\cdot)} \setminus R_{t,\l}$.
 Suppose that $w \in L_{s, \l(t+\cdot)} \setminus R_{t,\l}$.  
 Then $w$ is in the range of $g_t$, and so there is $z \in \mathbb{C}\setminus L_{t,\l}$ with $w = g_t(z)$.  
 The fact that $g_t(z)$ is captured at time $s$ by driving function $\l(t+\cdot)$ means that $z$ is captured at time $t+s$ by driving function $\l$.  So $z \in L_{t+s, \l} \setminus L_{t, \l}$, and $w\in g_t( L_{t+s, \l} \setminus L_{t, \l})$, proving that 
  $$g_t(L_{t+s, \l} \setminus L_{t, \l} ) = L_{s, \l(t+\cdot)} \setminus R_{t,\l}.$$
  
From this we obtain that 
  $$L_{t+s, \l} \setminus L_{t, \l}  = g_t^{-1}\left( L_{s, \l(t+\cdot)} \setminus R_{t,\l} \right),$$
and so
\begin{align*}
L_{t+s, \l}   &= \{ z \in \mathbb{C}  \, : \, T_z \leq t \} \cup \{ z \in \mathbb{C}  \, : \, t < T_z \leq t+s \} \\
& = L_{t,\l} \cup L_{t+s, \l} \setminus L_{t, \l} \\
&=  L_{t,\l} \cup g_t^{-1}\left( L_{s, \l(t+\cdot)} \setminus R_{t,\l} \right).
\end{align*}
\end{proof}

The difference between the concatenation property in the complex-valued case compared with that in the real-valued case is the following:  In the real-valued case, 
$$g_t(K_{t+s, \l} \setminus K_{t, \l} ) = K_{s, \l(t+\cdot)},$$ but in the complex case 
$$g_t(L_{t+s, \l} \setminus L_{t, \l} ) \neq L_{s, \l(t+\cdot)}.$$
Rather $L_{s, \l(t+\cdot)}$ and $R_{t, \l}$ will at least overlap at the point $\l(t)$, but the overlap can be even larger as we will see in the following example.

To illustrate the complex Loenwer equation, we briefly describe two examples.

\begin{example}
For the first example, let $\l(t) = 3\sqrt{2} \sqrt{1-t}$ for $t \in [0,1]$.  
One can compute that $L_1$ is the closed disk of radius $\sqrt{2}$ centered at $2\sqrt{2}$,
$\, R_1$ is the real interval $[0,4\sqrt{2}]$, and   
the conformal map $g_1: \mathbb{C} \setminus L_1 \to \mathbb{C} \setminus R_1$
is $g_1(z) = z + \frac{2}{z-2\sqrt{2}}$.
When $t <1$, then it can be shown that $L_t$ is a circular arc
and $R_t$ is a real interval containing $\l(t)$, as shown in Figure \ref{RealCircleExample}.  
In particular, there is an increasing odd function $\theta_t$ with $\theta_0 = 0$ and $\theta_1 = \pi$ so that $L_t = \gamma[-t,t]$ for
$\gamma(t) =  \sqrt{2} e^{i\theta_t} +2\sqrt{2} $.  
Under $g_t$, the two tips $\gamma(t), \gamma(-t)$ are mapped to $\l(t)$.  One can view $g_t$ as unzipping $L_t$ and flattening it, so the top curve of $L_t$ corresponds to the top of $R_t$, and similarly for the bottom.
As $t$ increases to 1, the left endpoint of $R_t$ decreases to 0 and the right endpoint increases to $4\sqrt{2}$.

\begin{figure}
\centering
\begin{tikzpicture}

\draw[thick] (0,0) to [out=90,in=0] (-1,1);
\draw[thick] (-1,1) to [out=180,in=45] (-1.707, 0.707);
\draw[fill] (-1.707,0.707) circle [radius=0.05];
\node[left] at (-1.8, 0.8) {$\gamma(t)$};
\draw[thick] (0,0) to [out=-90,in=0] (-1,-1);
\draw[thick] (-1,-1) to [out=180,in=-45] (-1.707, -0.707);
\draw[fill] (-1.707,-0.707) circle [radius=0.05];
\node[left] at (-1.8, -0.8) {$\gamma(-t)$};

\draw[fill] (0,0) circle [radius=0.05];
\node[right] at (0,0) {$3\sqrt{2}$};
\node[right] at (-0.7, -1.2) {$L_t$};

\draw[->] (1.7,1.8) to [out=45,in=135] (3.3,1.8);
\node[above] at (2.5,2.2) {$g_t$};

\draw[thick] (5,0) -- (8,0);

\draw[fill] (8,0) circle [radius=0.05];
\draw[fill] (5,0) circle [radius=0.05];
\draw[fill] (5.5,0) circle [radius=0.05];
\node[below] at (5.5,0) {$\l(t)$};
\node[above] at (7, 0) {$R_t$};

\end{tikzpicture}
\caption{The hulls $L_t$ and $R_t$ for driving function $\l(t) = 3\sqrt{2} \sqrt{1-t}$, when $t < 1$.  
}\label{RealCircleExample}
\end{figure}
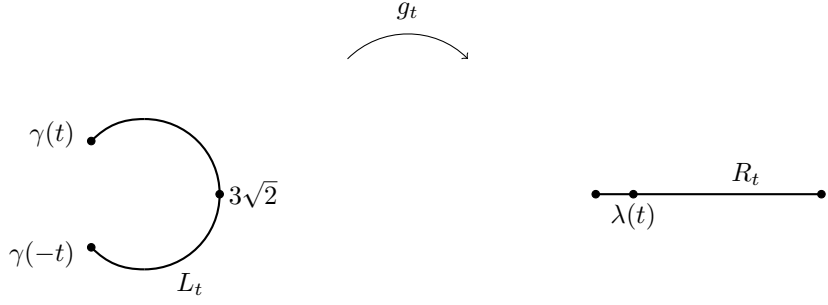

We now wish to consider the concatenation property in the context of this  example.  
Fix $t \in (0,1).$
The time-shifted driving function $\l(t+ \cdot)$ is given by:
$$ s \mapsto \l(t+s) = 3\sqrt{2} \sqrt{ 1- (t+s) }  = \sqrt{1-t} \cdot 3\sqrt{2} \sqrt{1-s/(1-t)}. $$
In other words, this is the map $s \mapsto a \l(s/a^2) $ for $a = \sqrt{1-t}$.
By the scaling property $L_{1-t,\l(t+ \cdot)}$ is a closed disc of radius $\sqrt{2}\sqrt{1-t}$, and its rightmost point will be $\l(t)$.
Thus the concatenation property tells us that 
$g_t(L_{1, \l} \setminus L_{t, \l})$ is the closed disc minus the left segment of $R_t$, as illustrated in Figure \ref{ConcatExample}. 
Notice that the sets
$g_t(L_{1, \l} \setminus L_{t, \l} )$ and $  L_{1-t, \l(t+\cdot)}$
differ by the left segment of $R_t$.
\end{example}

\begin{figure}
\centering
\begin{tikzpicture}

\draw[fill, lightgray]  (-1,0) circle [radius = 1];
\draw[fill, lightgray]  (4.8,0) circle [radius = 0.7];

\draw[thick] (0,0) to [out=90,in=0] (-1,1);
\draw[thick] (-1,1) to [out=180,in=45] (-1.707, 0.707);
\draw[fill] (-1.707,0.707) circle [radius=0.05];
\node[left] at (-1.8, 0.8) {$\gamma(t)$};
\draw[thick] (0,0) to [out=-90,in=0] (-1,-1);
\draw[thick] (-1,-1) to [out=180,in=-45] (-1.707, -0.707);
\draw[fill] (-1.707,-0.707) circle [radius=0.05];
\node[left] at (-1.8, -0.8) {$\gamma(-t)$};

\draw[fill] (0,0) circle [radius=0.05];
\node[right] at (0,0) {$3\sqrt{2}$};
\node[right] at (-0.7, -1.2) {$L_t$};

\draw[->] (1.7,1.8) to [out=45,in=135] (3.3,1.8);
\node[above] at (2.5,2.2) {$g_t$};

\draw[thick] (5.05,0) -- (8,0);

\draw[fill] (8,0) circle [radius=0.05];
\draw[fill] (5.05,0) circle [radius=0.05];
\draw[fill] (5.5,0) circle [radius=0.05];
\node[below] at (5.75,0) {$\l(t)$};
\node[above] at (7, 0) {$R_t$};

\end{tikzpicture}
\caption{The left grey set, $L_1 \setminus L_t$, is a closed disc minus the black curve $\gamma[-t,t]$.  The right grey set is its conformal image, $g_t(L_1 \setminus L_t)$, which is a closed disc minus the left segment of $R_t$.   
}\label{ConcatExample}
\end{figure}
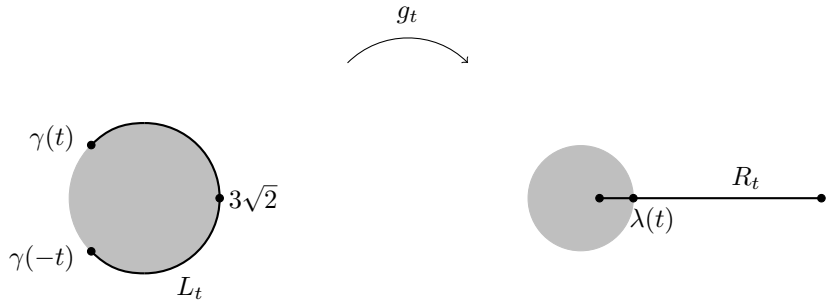

\begin{example}
For our next example, we consider the driving function 
$\l(t) = i \,3\sqrt{2}  \sqrt{t}$.  
By the duality property, we can identify the hulls for $\l$ by finding the hulls for
$$s \mapsto -i \l(t-s) = 3\sqrt{2} \sqrt{t-s} = 3 \sqrt{2t} \sqrt{ 1-s/t}. $$
However, we know these hulls from  our first example and the scaling property.
Thus
\begin{align*}
 L_{t, \l} &= i R_{t, -i\l(t-\,\cdot)} = [0, i\, 4\sqrt{2t}] \\
R_{t, \l} &= i L_{t, -i\l(t- \, \cdot)} = \text{ closed disc of radius } \sqrt{2t} \text{ centered at } i \,2\sqrt{2t}.
\end{align*}
Note that in this case, $L_{t, \l}$ is a simple curve, but it is only growing from one tip, instead from two tips as in the $t<1$ case of the first example.
\end{example}

When $c\in \mathbb{R}$, the upper-halfplane hulls $K_t = L_t \cap \mathbb{H}$ generated by driving functions $c\sqrt{1-t}$ and $c\sqrt{t}$ were computed in \cite{KNK}.  Applying Schwarz reflection yields the following.

\begin{prop}[\cite{KNK}] \label{KNKprop}
Let $c \in \mathbb{R}$.
\begin{enumerate}
    \item[(i)] When $|c|<4$, the left hull $L_t$ driven by $c\sqrt{1-t}$ is a simple curve $\gamma[-t,t]$ for all $t \leq 1$.  As $t \to 1$, the curve spirals infinitely around its two endpoints.
    \item[(ii)] When $|c| \geq 4$, the left hull $L_t$ driven by $c\sqrt{1-t}$ is a simple curve for $t<1$, but $L_1$ consists of a simple closed curve and its interior.
    \item[(iii)] The left hull $L_t$ driven by $c\sqrt{t}$ is the union of two line segments emanating from the origin.
\end{enumerate}
\end{prop}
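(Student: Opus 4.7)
The plan is to reduce Proposition \ref{KNKprop} to the chordal half-plane computations of \cite{KNK} by a direct application of Schwarz reflection. Since $c \in \mathbb{R}$ implies $\l = \overline{\l}$, the reflection property from Section \ref{background} gives $L_t = \text{Ref}_{\mathbb{R}}(L_t)$, and hence $L_t = K_t \cup \text{Ref}_{\mathbb{R}}(K_t)$ where $K_t := L_t \cap \overline{\mathbb{H}}$ is the standard chordal half-plane hull driven by $\l$. Each conclusion of the proposition will be read off by combining the explicit description of $K_t$ in \cite{KNK} with its reflection across $\mathbb{R}$.

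For the $c\sqrt{1-t}$ family in parts (i)--(ii), \cite{KNK} shows that $K_t$ is a simple curve starting from $c \in \mathbb{R}$ and growing into $\mathbb{H}$. When $|c|<4$, $K_t$ remains in $\mathbb{H}$ except at its starting point for all $t \leq 1$, and its tip spirals around a limit point as $t \to 1$. Gluing $K_t$ to its Schwarz reflection along the single shared point $c$ yields one simple curve $\gamma:[-t,t] \to \mathbb{C}$ with $\gamma(0) = c$, $\gamma(\pm s) \in \pm \mathbb{H}$ for $s>0$, and two conjugate spirals at the endpoints, giving case (i). When $|c| \geq 4$, $K_t$ is still simple for $t<1$, but at $t=1$ it returns to $\mathbb{R}$ at a second real point, enclosing a bounded Jordan region in $\overline{\mathbb{H}}$; after reflecting, the two real contact points become the only gluing points, and the union of $K_1$ with its mirror image is a Jordan curve whose interior, by symmetry of the flow, is filled in, yielding the closed topological disk of case (ii). For part (iii), \cite{KNK} shows $K_t$ is a line segment emanating from the origin at an angle determined by $c$; reflecting and unioning produces either a single segment along the imaginary axis (when $c=0$) or two segments meeting at the origin at conjugate angles, as claimed.

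The main thing to verify is that the Schwarz reflection genuinely recovers all of $L_t$ rather than a proper subset. This is immediate from \eqref{cle}: with real $\l$, the Loewner ODE is preserved under conjugation $z \mapsto \overline{z}$, so $g_t(\overline{z}) = \overline{g_t(z)}$ and hence $T_z = T_{\overline{z}}$, which means $z \in L_t$ if and only if $\overline{z} \in L_t$. The only slightly delicate point is case (ii), where one must confirm that the bounded region enclosed by the half-plane curve is indeed part of $K_1$ (equivalently, that every interior point has finite lifetime). This is exactly the content of the corresponding statement in \cite{KNK}, so no new analysis is required. I expect the whole argument to be a short paragraph once the three half-plane descriptions are quoted, with the hardest conceptual checkpoint being a clean statement of how the filled-in region in case (ii) arises from gluing the enclosed half-plane region to its reflection through the real interval joining the two contact points.
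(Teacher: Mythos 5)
Your proposal is correct and follows exactly the route the paper indicates: it states Proposition~\ref{KNKprop} as a citation to \cite{KNK} preceded by the one-line observation that ``Applying Schwarz reflection yields the following,'' which is precisely the reflection-property argument you spell out. The only (harmless) cosmetic difference is that you take $K_t = L_t \cap \overline{\mathbb{H}}$ while the paper writes $K_t = L_t \cap \mathbb{H}$; either convention works since the boundary contact points are handled the same way.
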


\section{Left hulls driven by $c\sqrt{1-t}$}\label{OneMinusT}
In this section we study the left hulls driven by functions of the form $\l(t) = c\sqrt{1-t}$ for $c \in \mathbb{C}$. 
We begin by calculating an implicit solution to the Loewner equation driven by $\l$.

\subsection{Implicit solution}

For the driving function 
$$\lambda(t) = c\sqrt{1-t}\text{,} $$
with $c \in \mathbb{C}$, we may compute an implicit solution to \eqref{cle}, the complex Loewner equation,
following the approach in \cite{KNK}. 
Define the function
$$G = G(t,z) = \frac{g_t(z)}{\sqrt{1-t}}\text{.} $$
Then
\begin{align*}
\partial_t G = \partial_t \left(\frac{g_t(z)}{\sqrt{1-t}} \right) 
&= \frac{\left(\frac{2}{g_t(z)-c\sqrt{1-t}}\right)\sqrt{1-t}+ \frac{1}{2}\frac{g_t(z)}{\sqrt{1-t}}}{1-t} \\
&= \frac{1}{1-t}\left(\frac{2}{G-c}+\frac{G}{2}\right) \\
&= \frac{1}{2}\cdot\frac{1}{1-t}\left(\frac{G^2-cG+4}{G-c} \right)\text{.} 
\end{align*}
Since the
differential equation is separable, we obtain
\begin{equation}\label{separable}
    \int\frac{G-c}{G^2-cG+4}dG = \frac{1}{2}\int\frac{dt}{1-t} \text{.} 
\end{equation}
We wish to use a partial fraction decomposition for the integral on the left-hand side. 
The roots of $G^2 - cG + 4$ are
$$A = \frac{1}{2}[c+\sqrt{c^2-16}] \;\;\; \text{ and } \;\; 
B = \frac{1}{2}[c-\sqrt{c^2-16}]\text{.}$$
We will see later that these points will play a special role. 
Now, notice that
\begin{align}\label{AB4}
A + B &= \frac{1}{2}[c + \sqrt{c^2-16} + c - \sqrt{c^2-16}] = c 
 \text{,} \nonumber \\ 
A - B &= \frac{1}{2}[c+\sqrt{c^2-16}-(c-\sqrt{c^2-16})] =  \sqrt{c^2-16}\text{, and } \nonumber \\ 
AB &= \frac{1}{4}[(c+\sqrt{c^2-16})(c-\sqrt{c^2-16})] = \frac{1}{4}(c^2-(c^2-16))=4.
\end{align}
We assume that $c \neq \pm 4$, so that $A$ and $B$ are distinct.  
Since
$G^2 - cG + 4 = (G-A)(G-B) $, we can find the partial fractions decomposition:
$$\frac{G-c}{G^2 - cG + 4} = \frac{G-c}{(G-A)(G-B)}
= \frac{\alpha}{G-A} + \frac{\beta}{G-B} , $$
where $\alpha$ and $\beta$ must satisfy 
$$\alpha + \beta = 1 \;\;\; \text{ and } \;\;\;
\beta A + \alpha B = c\text{.} $$
By substitution,
$$(1-\alpha)A + \alpha B = c. $$
Thus
$$\alpha = \frac{c-A}{B-A} 
= \frac{B}{B-A} 
= \frac{\frac{1}{2}\left( c-\sqrt{c^2-16}\right)}{-\sqrt{c^2-16}} = \frac{1}{2}\left( 1 - \frac{c}{\sqrt{c^2-16}}\right) \text{,}$$
and
$$\beta = 1 - \alpha 
= \frac{1}{2}\left( 1+\frac{c}{\sqrt{c^2-16}}\right) \text{.} $$
Returning to  \eqref{separable}, we obtain
$$\int \left(\frac{\alpha}{G-A} + \frac{\beta}{G-B} \right)dG = \frac{1}{2}\int\frac{dt}{1-t}\text{.} $$
Integrating gives 
$$\alpha\log(G-A) + \beta\log(G-B) = -\log(\sqrt{1-t}) + C \text{,}$$
where $C$ denotes the constant of integration. 
Since  $\alpha + \beta = 1$,
\begin{align*}
C &= \alpha\log(G-A) + \beta\log(G-B) + (\alpha+\beta)\log(\sqrt{1-t}) \\
&= \alpha\log[(G-A)\sqrt{1-t}]+\beta\log[(G-B)\sqrt{1-t}] \\
&=\alpha\log(g_t(z)-A\sqrt{1-t}) + \beta\log(g_t(z)-B\sqrt{1-t}),
\end{align*}
where in the last step we used that $G = \frac{g_t(z)}{\sqrt{1-t}}$.
We now want to plug in our initial condition $g_0(z) =z $ to solve for the constant of integration $C$. Fixing $t = 0$, we find that
$$C = \alpha\log(z - A) + \beta\log(z - B).$$
Our equation then becomes
\begin{equation} \label{implicitSolution}
\alpha\log(g_t(z)-A\sqrt{1-t}) + \beta\log(g_t(z)-B\sqrt{1-t}) 
    =\alpha\log(z-A) + \beta\log(z-B)\text{.} 
\end{equation}

Now suppose that $\displaystyle z_t \in L_t \setminus \bigcup_{s \in [0,t)} L_s$.
This means that 
$g_t(z_t) = \lambda(t) = c\sqrt{1-t}$.
From \eqref{implicitSolution}, we find that $z_t$ must satisfy $$\alpha\log(c\sqrt{1-t}-A\sqrt{1-t}) + \beta\log(c\sqrt{1-t}-B\sqrt{1-t}) 
 =\alpha\log(z_t-A) + \beta\log(z_t-B)\text{.}  $$
Since $A+B =c$ and $\alpha+\beta = 1$, the left-hand side simplifies to
$ \alpha\log(B) + \beta\log(A) + \log\sqrt{1-t},$
and we have shown that
\begin{equation*}
    \alpha\log B + \beta\log A + \log\sqrt{1-t} = \alpha\log(z_t-A) + \beta\log(z_t-B)\text{.} 
\end{equation*}

We record the conclusions of our computation in the following lemma.

\begin{lemma} \label{ImplicitSolutionLemma}
For $c \in \mathbb{C} \setminus \{\pm 4\}$, let  $\l(t) = c\sqrt{1-t}$ be the driving function.  
Then for $t \in [0,1]$, the solution $g_t(z)$ to \eqref{cle} satisfies
\begin{equation*} 
\alpha\log(g_t(z)-A\sqrt{1-t}) + \beta\log(g_t(z)-B\sqrt{1-t}) 
    =\alpha\log(z-A) + \beta\log(z-B), 
\end{equation*}
and $\displaystyle z_t \in L_t \setminus \bigcup_{s \in [0,t)} L_s$
satisfies
\begin{equation}\label{TipEquation}
    \alpha\log B + \beta\log A + \log\sqrt{1-t} = \alpha\log(z_t-A) + \beta\log(z_t-B),
\end{equation}
where $A = \frac{1}{2}[c+\sqrt{c^2-16}], \,
B = \frac{1}{2}[c-\sqrt{c^2-16}], \,
\alpha = \frac{1}{2}\left[ 1 - \frac{c}{\sqrt{c^2-16}}\right],$
and $ \beta = \frac{1}{2}\left[ 1+\frac{c}{\sqrt{c^2-16}}\right].$
\end{lemma}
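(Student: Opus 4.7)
The plan is to follow the standard separation-of-variables trick from the real-valued analysis of \cite{KNK}, lifted to the complex setting. The key observation is that the square root in the driving function suggests the scaling substitution
$$G(t,z) = \frac{g_t(z)}{\sqrt{1-t}},$$
which should eliminate the explicit time-dependence. First I would compute $\partial_t G$ using the chain rule and the Loewner equation \eqref{cle}. After a bit of algebra, the factors of $\sqrt{1-t}$ in the denominator of the Loewner ODE combine with the $\frac{1}{2}\, g_t(z)(1-t)^{-3/2}$ term from differentiating $(1-t)^{-1/2}$, producing an autonomous-in-$G$ right-hand side of the form
$$\partial_t G = \frac{1}{2(1-t)} \cdot \frac{G^2 - cG + 4}{G - c}.$$
This is separable in $G$ and $t$.

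Next I would factor the quadratic $G^2 - cG + 4 = (G-A)(G-B)$, where $A,B$ are defined in the statement; the identities $A+B = c$ and $AB = 4$ follow immediately from Vieta. Assuming $c \ne \pm 4$ guarantees $A \ne B$, so a partial fractions decomposition
$$\frac{G - c}{(G-A)(G-B)} = \frac{\alpha}{G-A} + \frac{\beta}{G-B}$$
is valid, and solving the linear system $\alpha + \beta = 1$ and $\beta A + \alpha B = c$ yields exactly the claimed $\alpha, \beta$. Integrating both sides of \eqref{separable} produces
$$\alpha \log(G - A) + \beta \log(G - B) = -\log\sqrt{1-t} + C,$$
and substituting $G = g_t(z)/\sqrt{1-t}$ while using $\alpha + \beta = 1$ absorbs the $\sqrt{1-t}$ factor inside the logarithms, giving the first asserted equation up to determining $C$. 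Setting $t = 0$ and using $g_0(z) = z$ fixes $C = \alpha \log(z-A) + \beta \log(z-B)$.

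For the second (tip) equation I would specialize to $z = z_t$ with $g_t(z_t) = \lambda(t) = c\sqrt{1-t}$. Then $g_t(z_t) - A\sqrt{1-t} = (c - A)\sqrt{1-t} = B\sqrt{1-t}$ (using $A+B=c$), and similarly for the other term, so the left-hand side collapses to $\alpha\log B + \beta \log A + (\alpha+\beta)\log\sqrt{1-t}$, which is \eqref{TipEquation}.

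The one genuine subtlety — and the place I would have to be careful — is that the complex logarithm is multivalued, so the integration step is only defined locally on branches. I would handle this by noting that the implicit identity should be understood up to a choice of branch determined by continuity in $t$ starting from the initial condition at $t=0$; equivalently, the exponential of both sides is single-valued and agrees, so the stated equation holds for a consistent branch choice along the flow. Since the subsequent analysis in the paper uses this relation locally near the tip and tracks branches explicitly, treating the logarithms as branch-consistent identities is sufficient, and no issue arises at the non-generic parameters $c = \pm 4$ since these are excluded.
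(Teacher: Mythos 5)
Your proposal is correct and follows essentially the same route as the paper: the scaling substitution $G = g_t(z)/\sqrt{1-t}$, separation of variables, factoring the quadratic via $A,B$, partial fractions into $\alpha,\beta$, integration, and fixing the constant by setting $t=0$, with the tip equation obtained by plugging $g_t(z_t)=c\sqrt{1-t}$ and simplifying via $A+B=c$ and $\alpha+\beta=1$. Your closing remark about branch consistency of the complex logarithm is a reasonable addition that the paper leaves implicit.
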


Our goal is to understand the hulls for the driving function $c\sqrt{1-t}$ for any $c \in \mathbb{C}$.
However, due to the reflection property, we may restrict our attention to $c$ with $\text{Arg}(c) \in [0, \frac{\pi}{2}]$. 
Under this restriction, we will look at the possible values for our parameters $A, B, \alpha,$ and $\beta$.

\begin{lemma} \label{parameterlemma}
Let $c \in \mathbb{C} \setminus \{\pm 4\}$ with $\text{Arg}(c) \in [0, \frac{\pi}{2}]$.
Then the parameters $A, B, \alpha$ and $\beta$ defined in Lemma \ref{ImplicitSolutionLemma} satisfy
\begin{align*}
    A &\in \{ z \, : \,  \text{Re}(z) \geq 0, \text{Im}(z) \geq 0
          \text{ and } |z| \geq 2 \}, \\ 
    B &\in \{ z \, : \,  \text{Re}(z) \geq 0, \text{Im}(z) \leq 0 
         \text{ and } |z| \leq 2\}, \\
    \alpha &\in \{z \, : \, \text{Re}(z) \leq 1/2 \text{ and Im}(z) \geq 0 \}, \;\; \text{ and } \\
    \beta &\in \{z \, : \, \text{Re}(z) \geq 1/2 \text{ and Im}(z) \leq 0 \}. 
\end{align*}
Moreover, if Arg$(c) \in (0, \pi/2)$, then the parameters will lie in the interior of the  given regions.
\end{lemma}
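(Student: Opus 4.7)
The plan is to track where $\sqrt{c^2-16}$ lands under the principal branch when $\mathrm{Arg}(c) \in [0,\pi/2]$, and then deduce everything else by algebra using the relations $A+B = c$, $AB = 4$, and the fact that $\alpha - \tfrac12 = -\tfrac{c}{2\sqrt{c^2-16}}$ and $\beta = 1-\alpha$.

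First I would check that for $c$ in the closed first quadrant (with $c \neq \pm 4$), the quantity $c^2 - 16$ lies in the closed upper half-plane: writing $c = c_1 + ic_2$ with $c_1, c_2 \geq 0$, the imaginary part of $c^2 - 16$ is $2c_1c_2 \geq 0$. Hence the principal square root produces $\sqrt{c^2-16} = p + iq$ with $p, q \geq 0$. This immediately places $A = \tfrac12(c + \sqrt{c^2-16})$ in the closed first quadrant, and since $B = 4/A$ by \eqref{AB4}, $B$ has argument $-\mathrm{Arg}(A) \in [-\pi/2, 0]$, so $\mathrm{Re}(B) \geq 0$ and $\mathrm{Im}(B) \leq 0$. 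Because $|A||B| = 4$, proving $|A| \geq 2 \geq |B|$ reduces to proving $|A| \geq |B|$, i.e.\ $|c + \sqrt{c^2-16}|^2 \geq |c - \sqrt{c^2-16}|^2$. Expanding gives the condition $\mathrm{Re}(c \cdot \overline{\sqrt{c^2-16}}) \geq 0$, and this real part equals $c_1 p + c_2 q \geq 0$.

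For $\alpha$ and $\beta$, the identity $\alpha = \tfrac12 - \tfrac{c}{2\sqrt{c^2-16}}$ shows that $\mathrm{Re}(\alpha) \leq \tfrac12$ is equivalent to $\mathrm{Re}\bigl(\tfrac{c}{\sqrt{c^2-16}}\bigr) \geq 0$, while $\mathrm{Im}(\alpha) \geq 0$ is equivalent to $\mathrm{Im}\bigl(\tfrac{c}{\sqrt{c^2-16}}\bigr) \leq 0$. A direct computation with $c = c_1+ic_2$ and $\sqrt{c^2-16} = p+iq$ shows the real-part condition is $c_1 p + c_2 q \geq 0$ (already verified above), and the imaginary-part condition is $c_2 p - c_1 q \leq 0$, i.e.\ $c_2 p \leq c_1 q$. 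The corresponding claims for $\beta$ then follow automatically from $\beta = 1 - \alpha$.

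The main obstacle is the inequality $c_2 p \leq c_1 q$. Here I would exploit the two identities obtained by squaring $p+iq = \sqrt{c^2-16}$, namely $p^2 - q^2 = c_1^2 - c_2^2 - 16$ and $pq = c_1 c_2$, together with $p^2 + q^2 = |c^2 - 16|$. Combining these yields
\[
q^2 - c_2^2 \;=\; \tfrac12\bigl(\,|c^2-16| - (|c|^2 - 16)\bigr),
\]
which is non-negative by the reverse triangle inequality $|c^2 - 16| \geq |c|^2 - 16$ (since $|c^2| = |c|^2$). Thus $q \geq c_2$, and multiplying by $p \geq 0$ and using $pq = c_1 c_2$ gives $c_2 p \leq c_1 q$ (the edge case $q = 0$ forces $c_1 c_2 = 0$ and is handled directly).

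Finally, for the interior statement, I would note that when $\mathrm{Arg}(c) \in (0, \pi/2)$ we have $c_1, c_2 > 0$, so $c^2 - 16$ has strictly positive imaginary part, forcing $p, q > 0$. All the nonnegativity inequalities above then become strict: $c_1 p + c_2 q > 0$, and the reverse triangle inequality is strict because equality $|c^2 - 16| = |c|^2 - 16$ requires $c^2$ to be a non-negative real multiple of $16$, which for $c$ in the open first quadrant fails. The only remaining boundary possibility, $|A| = |B| = 2$, would force $c$ real in $[0,4)$ via $A = 2e^{i\theta}$, $B = 2e^{-i\theta}$, again contradicting $\mathrm{Arg}(c) \in (0,\pi/2)$. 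This places all four parameters in the interiors of their prescribed regions.
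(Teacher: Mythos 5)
Your proof is correct, but it takes a genuinely different route from the paper. The paper observes that $c\mapsto A,B,\alpha,\beta$ are conformal maps of the first quadrant and then verifies only the boundary behavior: each map sends the three boundary pieces $[0,4]$, $(4,\infty)$, $i\mathbb{R}^+$ (traversed counterclockwise) onto the boundary of the claimed region (also counterclockwise), and the interior statement follows from the argument principle. This is slick but leans on the implicit claim that the maps are univalent and on the reader filling in the degree-theoretic step. You instead fix the principal branch, write $\sqrt{c^2-16}=p+iq$ with $p,q\geq 0$ (valid since $\mathrm{Im}(c^2-16)=2c_1c_2\geq 0$), and reduce every inequality to arithmetic in $c_1,c_2,p,q$: the statements $|A|\geq 2\geq|B|$ and $\mathrm{Re}(\alpha)\leq\tfrac12$ both collapse to $c_1p+c_2q\geq 0$, while $\mathrm{Im}(\alpha)\geq 0$ reduces to $c_2p\leq c_1q$, which you extract from $pq=c_1c_2$ and $q\geq c_2$ (the latter via the triangle inequality $|c^2-16|\geq|c|^2-16$). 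This is more elementary — no conformality needed — and it makes the ``strict when $\mathrm{Arg}(c)\in(0,\pi/2)$'' claim completely explicit: openness of the quadrant gives $c_1,c_2>0$, which makes $2c_1c_2>0$ and the triangle inequality strict, propagating strictness through the whole chain. One small stylistic note: the step ``multiplying $q\geq c_2$ by $p$'' is compressed; the clean chain is $c_2p\leq c_2c_1\leq qc_1$, using $p\leq c_1$ (from $pq=c_1c_2$ with $q\geq c_2>0$) and then $c_2\leq q$. Otherwise the argument is complete and sound.
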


\begin{proof}
Because the formulas for our parameters $A, B, \alpha, \text{ and }\beta$ are conformal maps in $c$, the result will follow from the behavior of each map on the boundary. In particular, we will show that each parameter maps the boundary of the first quadrant, oriented counterclockwise, to the boundary of the desired set, oriented counterclockwise.
We separate the first quadrant boundary into three components:
(1) $c \in [0,4]$,
(2) $c \in (4, \infty),\,$ and
(3) $c \in i\mathbb{R}^+$.

We begin by studying $A= \frac{1}{2}[c+\sqrt{c^2-16}].$
Consider first the case $c \in [0,4]$. 
Since $c^2 - 16 \leq 0$, then
$A = \frac{c}{2}+\frac{i}{2}\sqrt{16-c^2}$. 
Therefore  Re$(A) = \frac{c}{2}$ and  Im$(A)=\frac{\sqrt{16-c^2}}{2}$ are both non-negative, and  
$$|A| = \sqrt{\frac{c^2}{4}+\frac{16-c^2}{4}}  = 2. $$
This means that $A$ maps $[0,4]$ to the quarter-circle of radius 2 centered at 0 in the first quadrant.
In the second case when $c \in (4, \infty)$, then  $A \in \mathbb{R}$, and $A$ increases from 2 to $\infty$ as $c$ increases from 4 to $\infty$.
In the last boundary case $c = ik$ for some $k > 0$,
$$A = i\frac{k}{2} + \frac{\sqrt{-k^2-16}}{2} = i\left( \frac{k}{2} + \frac{\sqrt{k^2+16}}{2} \right) \in i\mathbb{R}^+ .$$
 We also see that
$$|A| = \frac{k}{2}+\frac{\sqrt{k^2+16}}{2} > 2.  $$
Therefore $A$ maps the boundary of the first quadrant, oriented counterclockwise, to the boundary of the desired set, oriented counterclockwise.

Next we look at $B = \frac{1}{2}[c-\sqrt{c^2-16}]$, and we consider the same boundary cases. 
If $c \in [0,4]$, then Re$(B) = \frac{c}{2}$ and  Im$(B)=-\frac{\sqrt{16-c^2}}{2}$,
and it follows that Re$(B) \geq 0$, Im$(B) \leq 0$ and $|B| = 2$. 
In the boundary case $c \in (4, \infty)$, then $B \in \mathbb{R}$ and $B$ decreases from 2 to 0 as $c$ increases from 4 to $\infty$.
For the last boundary case, consider $c \in i\mathbb{R}^+$. Then $c = ik$ for $k > 0$ and 
$$B = i\left( \frac{k}{2}-\frac{\sqrt{k^2+16}}{2}\right) \in i \mathbb{R}^-. $$
Thus Im$(B)$ is negative and increases from -2 to 0 as $k$ increases from 0 to $\infty$.
We have shown that $B$ maps the boundary of the first quadrant, oriented counterclockwise, to the boundary of the desired set, oriented counterclockwise.

Our third parameter to study is $\alpha = \frac{1}{2}\left[ 1 - \frac{c}{\sqrt{c^2-16}}\right]$.
Consider the case that $c \in [0,4]$. 
Then
$$\alpha = \frac{1}{2}\left[1-\frac{c}{i\sqrt{16-c^2}}\right]=\frac{1}{2}+i\frac{c}{2\sqrt{16-c^2}} \text{.}$$
Thus Re$(\alpha) = \frac{1}{2}$ and Im$(\alpha)$ increases from 0 to $\infty$ as $c$ increases from 0 to 4.
When $c \in (4, \infty)$, then $\alpha$ is real-valued and $\alpha < 0$ since $c > \sqrt{c^2-16}$. Further, $\alpha$ increases from $-\infty$ to 0, as $c$ increases from 4 to $\infty$.
Lastly, we consider the boundary case that  $c = ik$ for $k > 0$.  Then
$$\alpha  =\frac{1}{2}\left[1-\frac{ik}{\sqrt{-k^2-16}} \right]
=\frac{1}{2}\left[1 -\frac{k}{\sqrt{k^2+16}} \right] \in \mathbb{R} \text{.}$$
Further, $\alpha$ increases from 0 to $\frac{1}{2}$ to 0 as $k$ decreases from $\infty$ to 0. 
Thus $\alpha$ maps the boundary of the first quadrant, oriented counterclockwise, to the boundary of the desired set, oriented counterclockwise.

Because $\alpha+\beta=1$, we may conclude Re$(\alpha)+\text{Re}(\beta)=1$ and Im$(\alpha)+\text{Im}(\beta) = 0$. Since Re$(\alpha) \leq \frac{1}{2}$, we must then have Re$(\beta) \geq \frac{1}{2}$. Since Im$(\alpha) \geq 0$, we must then have Re$(\beta) \leq 0$.
\end{proof}

\subsection{Simple curves before time 1}

In this section, we prove that the left hulls $L_s$ for $s<1$ generated by $c\sqrt{1-t}$ are simple curves when $c$ is not on the phase transition curve Re$(\alpha) =0$.  
Note that this implies that if the hull $L_1$ is not a simple curve for Re$(\alpha) \neq 0$, then the non-simpleness arises at time $t=1$.

\begin{prop}  \label{simplecurve1}
Let $c \in \mathbb{C}$,  let $\alpha = \frac{1}{2}\left[ 1 - \frac{c}{\sqrt{c^2-16}}\right]$,
let $\lambda(t) = c\sqrt{1-t}$, and
let $ s \in (0, 1)$. 
\begin{enumerate}
\item[(i)]When  Re$(\alpha) \neq 0$, 
the left hull $L_s$ generated by $\l |_{[0,s]}$ is a simple curve.
In particular, $L_s = \gamma[-s,s]$ for a simple curve $\gamma: [-s, s] \to \mathbb{C}$ with
$$\gamma(t) = \lim_{y \to 0^+} g_t^{-1}(\l(t)+iy) \;\; \text{ and } \;\; \gamma(-t) = \lim_{y \to 0^-} g_t^{-1}(\l(t)+iy).$$

\item[(ii)] When Re$(\alpha) = 0$ and Arg$(c) \in (0, \frac{\pi}{2})$, 
 the left hull $L_s$ generated by $\l |_{[0,s]}$ can be decomposed into two sets, an upper left hull $L_s^+$ and a lower left hull $L_s^-$
with $L_s^+ \cup L_s^- = L_s$ and $L_s^+ \cap L_s^- = \{c\}$.  
The lower left hull is a simple curve with $L_s^- = \gamma[-s,0]$ for $\gamma$ defined in (i).
\end{enumerate}
\end{prop}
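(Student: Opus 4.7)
The plan is to combine Tran's local quasi-arc theorem (Theorem~\ref{HuyTheorem}) with the concatenation property, exploiting the self-similarity identity
\begin{equation*}
\lambda(t+r) = c\sqrt{1-t-r} = \sqrt{1-t}\,\lambda\bigl(r/(1-t)\bigr),
\end{equation*}
which combined with the scaling property yields $L_{r,\lambda(t+\cdot)} = \sqrt{1-t}\, L_{r/(1-t),\lambda}$. Thus each time-shifted hull is a rescaled copy of the original, and the task reduces to propagating the simple-curve property forward in time.

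First I would verify directly that the Lip$(1/2)$ norm of $\lambda|_{[0,\epsilon]}$ equals $|c|\sqrt{\epsilon}/(1+\sqrt{1-\epsilon})$, which tends to $0$ as $\epsilon \to 0^+$, so Theorem~\ref{HuyTheorem} gives some $\epsilon_0 > 0$ with $L_{\epsilon_0}$ a simple quasi-arc. Let
\begin{equation*}
T_* = \sup\bigl\{s \in (0,1) : L_s \text{ is a simple curve described by \eqref{Lcurve}}\bigr\},
\end{equation*}
so $T_* > 0$, and in case (i) aim to show $T_* = 1$. Assuming $T_* < 1$, the concatenation property combined with the self-similarity above gives
\begin{equation*}
L_{T_* + \delta} = L_{T_*} \cup g_{T_*}^{-1}\bigl(L_{\delta, \lambda(T_* + \cdot)} \setminus R_{T_*}\bigr),
\end{equation*}
where $L_{\delta, \lambda(T_* + \cdot)} = \sqrt{1-T_*}\, L_{\delta/(1-T_*), \lambda}$ is a small quasi-arc through $\lambda(T_*)$ for sufficiently small $\delta$. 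The step that remains is to show the new piece attaches to $L_{T_*}$ only at its two tips $\gamma(\pm T_*)$, which reduces to proving $L_{\delta, \lambda(T_* + \cdot)} \cap R_{T_*} = \{\lambda(T_*)\}$ for small $\delta$.

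For this transversality step I would extract an asymptotic expansion of $g_{T_*}^{-1}(\lambda(T_*) + iy)$ as $y\to 0^\pm$ from the implicit solution in Lemma~\ref{ImplicitSolutionLemma}, which identifies the two one-sided tangent directions of $\gamma$ at $\gamma(\pm T_*)$ and, applied on the image side, the tangent of $R_{T_*}$ at $\lambda(T_*)$, all with explicit dependence on $\alpha, \beta, A, B$. The condition Re$(\alpha) \ne 0$ then translates to both one-sided tangents being genuinely transverse to $R_{T_*}$, so the new quasi-arc immediately exits $R_{T_*}$ on both sides and the extension is again a simple curve, establishing (i). In case (ii), Re$(\alpha) = 0$ degenerates one of the two tangents, and the parameter region in Lemma~\ref{parameterlemma} identifies the $y\to 0^-$ side as the transverse one, so $L_s^- = \gamma[-s,0]$ remains a simple curve by the same argument applied only on that side. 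The other side is collected into $L_s^+$, and the equality $L_s^+ \cap L_s^- = \{c\}$ follows because the two sides share only their common origin $\gamma(0) = \lambda(0) = c$ at time $0$, while the transverse attachment on the lower side separates the remainder of $L_s^-$ from $L_s^+$.

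The main obstacle is this transversality/attachment step. In the real-valued setting $R_t \subset \mathbb{R}$ and new pieces automatically escape into $\mathbb{H}$, so the attachment is trivial; but here $R_t$ is itself a simple curve through $\lambda(t)$ in $\mathbb{C}$, and its tangent must be computed and compared against that of the incoming piece. The careful bookkeeping of logarithm branches in the implicit solution, combined with the identification of Re$(\alpha) = 0$ as the precise bifurcation value via Lemma~\ref{parameterlemma}, is what makes both the statement and its proof substantially richer than in the real case.
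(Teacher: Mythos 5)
Your overall scaffold (use Theorem~\ref{HuyTheorem} locally, then extend via the concatenation property and the self-similarity $\lambda(t+\cdot)=\sqrt{1-t}\,\lambda(\cdot/(1-t))$) is the right starting point and matches the paper's. However, the key step you propose — arguing that $\text{Re}(\alpha)\neq 0$ forces \emph{transversality} between the tangent of the incoming piece at $\lambda(T_*)$ and that of $R_{T_*}$ — is not correct, and this is where the genuine content lies. To see the problem, look at the real case $c\in(0,\infty)\setminus\{4\}$: there the left hull is a simple curve for \emph{all} $t<1$, regardless of whether $|c|<4$ (so $\text{Re}(\alpha)>0$) or $|c|>4$ (so $\text{Re}(\alpha)<0$). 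If $\text{Re}(\alpha)$ governed a local tangency condition at each $\lambda(\tau)$, the two cases would have to look different locally for $\tau<1$, but they do not; the distinction only appears at $t=1$. Furthermore, the hulls produced by Theorem~\ref{HuyTheorem} are quasi-arcs, which need not have one-sided tangents, so the object you want to compute may not exist. Even granting an asymptotic direction, transversality at the contact point would not preclude the new piece from wandering back to hit $R_{T_*}$ at a \emph{different} point, which is exactly what must be ruled out.

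What $\text{Re}(\alpha)\neq 0$ actually prevents is a \emph{global} self-intersection, and the paper's argument is global: it supposes that (after rescaling by the self-similarity) the hull driven by $\lambda$ from time $0$ contains a simple closed loop beginning and ending at $c=\lambda(0)$ for some positive time $r$. Plugging $z_r=c$ and $t=r$ into the implicit tip equation~\eqref{TipEquation} and tracking branches of the logarithm, the winding numbers $p,q\in\{-1,0,1\}$ of the loop around $A$ and $B$ must satisfy $0 = 2\pi p\,\text{Re}(\alpha) + 2\pi q\,\text{Re}(\beta)$; combined with $\alpha+\beta=1$ and Lemma~\ref{parameterlemma}, this forces $p=q=0$ and hence $\log\sqrt{1-r}=0$, i.e.\ $r=0$, a contradiction. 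That winding-number bookkeeping is precisely the role of $\text{Re}(\alpha)\neq 0$, not any local angle condition. Separately, the paper sets up the induction working \emph{backward} over a subdivision of $[0,s]$ (possible because $\lambda$ is $C^1$ on $[0,s]$ for $s<1$), which sidesteps the issue in your supremum setup of proving that $L_{T_*}$ is still a simple curve at the critical time. For part (ii), rather than a degenerate tangent, the paper uses Lemma~\ref{lowerlemma} (monotonicity of $\text{Im}\,g_t(z)-\text{Im}\,\lambda(t)$) and Lemma~\ref{HullLocation} to confine the lower hull strictly below, and the right hull on or above, the horizontal line $y=\text{Im}\,\lambda(\tau)$, so they meet only at $\lambda(\tau)$.
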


In our proof of Proposition \ref{simplecurve1}, we will need the following two lemmas.

\begin{lemma}\label{lowerlemma}
Let $c \in \mathbb{C}$ with Arg$(c) \in (0, \frac{\pi}{2}]$, 
and let $\lambda(t) = c\sqrt{1-t}$.
If there exists some time $\tau \in [0, T_z \wedge 1)$ with 
$$\text{Im }g_\tau(z) \geq \text{Im } \l(\tau),  $$
then $\text{Im }g_t(z) \geq \text{Im }\l(t)  $ for all $t \in [\tau,T_z \wedge 1). $
\end{lemma}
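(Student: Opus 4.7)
The plan is to set up a one-dimensional barrier argument for the quantity
$$h(t) := \operatorname{Im}\bigl(g_t(z) - \l(t)\bigr) = \operatorname{Im} g_t(z) - \operatorname{Im} \l(t).$$
The hypothesis says $h(\tau) \geq 0$, and we wish to show $h \geq 0$ on $[\tau, T_z \wedge 1)$. Since $h$ is continuous on this interval and $g_t(z) \neq \l(t)$ throughout, it suffices to check that at any point $t_0 \in [\tau, T_z \wedge 1)$ where $h(t_0) = 0$, one has $h'(t_0) > 0$. Indeed, if $h$ ever took a negative value at some $t^\ast \in (\tau, T_z \wedge 1)$, then setting $t_0 := \sup\{t \in [\tau, t^\ast] : h(t) = 0\}$ would give $h(t_0) = 0$ and $h(t) < 0$ for $t \in (t_0, t^\ast]$, forcing $h'(t_0) \leq 0$, a contradiction.

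First I would compute the two contributions to $h'(t_0)$. From the Loewner equation \eqref{cle},
$$\frac{d}{dt}\operatorname{Im} g_t(z)\bigg|_{t = t_0} = \operatorname{Im}\!\left(\frac{2}{g_{t_0}(z) - \l(t_0)}\right).$$
At $t_0$, the assumption $h(t_0) = 0$ means $g_{t_0}(z) - \l(t_0)$ is real, and it is nonzero since $t_0 < T_z$. Hence the displayed imaginary part is $0$. On the other hand,
$$\l'(t) = -\frac{c}{2\sqrt{1-t}}, \qquad \operatorname{Im} \l'(t) = -\frac{\operatorname{Im} c}{2\sqrt{1-t}}.$$
Since $\operatorname{Arg}(c) \in (0, \pi/2]$, we have $\operatorname{Im} c > 0$, so $\operatorname{Im} \l'(t_0) < 0$ and therefore
$$h'(t_0) = 0 - \operatorname{Im} \l'(t_0) = \frac{\operatorname{Im} c}{2\sqrt{1-t_0}} > 0.$$

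This single sign check finishes the argument via the barrier/contradiction step described above. There is no real obstacle: the only thing to be careful about is that the argument requires $g_{t_0}(z) \neq \l(t_0)$, which is free from $t_0 < T_z$, and that $\operatorname{Im} c > 0$, which is where the open endpoint $\operatorname{Arg}(c) > 0$ is used (at $\operatorname{Arg}(c) = 0$ the driving function is real-valued and the lemma is vacuous in the present form). Note also that $1 - t_0 > 0$ throughout, since we restrict to $t_0 < 1$, so $\l'$ and its imaginary part are well-defined.
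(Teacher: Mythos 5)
Your proof is correct and uses essentially the same argument as the paper: you define the same quantity ($h = \phi$), compute the same derivative, and observe that at any zero of $h$ the Loewner term contributes nothing to $h'$ while $-\operatorname{Im}\l'(t) = \operatorname{Im}(c)/(2\sqrt{1-t}) > 0$. The only difference is that you spell out the barrier/supremum step in more detail than the paper's terse "this implies $\phi(t)\geq 0$," which is a fine clarification but not a different approach.
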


\begin{proof}
Let $\phi(t) = \text{Im} \,g_t(z) - \text{Im}\,\l(t)$ for $t \in [\tau,T_z \wedge 1). $
Then
\begin{align*}
\phi'(t) 
  &= \text{Im}\left( \frac{2}{g_t(z) - \l(t)} \right) +  \frac{\text{Im}(c)}{2\sqrt{1-t}} \\
  & = -2\frac{\text{Im }g_t(z) - \text{Im }\l(t)}{|g_t(z)-\l(t)|^2} +  \frac{\text{Im}(c)}{2\sqrt{1-t}} \\ 
  & = -2\frac{\phi(t)}{|g_t(z)-\l(t)|^2} +  \frac{\text{Im}(c)}{2\sqrt{1-t}}.
\end{align*}
Recall that $\phi(\tau) \geq 0$.  If there is some time $t_0 \in [\tau,T_z \wedge 1)$ with $\phi(t_0) = 0$, then $\phi'(t_0) > 0$, and $\phi$ is increasing at $t_0$.  
This implies that $\phi(t) \geq 0$ for all $t \in [\tau,T_z \wedge 1)$.

\end{proof}

\begin{lemma} \label{HullLocation}
Let $\l(t)$ be a driving function on the closed interval $[0,T]$.
Define the vertical strip $V_\l$ and horizontal strip $H_\l$ by
\begin{align*}
 &V_{\l} = \{z\in\mathbb{C}: \min_{t\in[0,T]}\text{Re}\,\l(t) \leq \text{Re}(z) \leq \max_{t\in[0,T]}\text{Re}\,\l(t)\} \;\; \text{ and }\\
  &H_{\l} = \{z\in\mathbb{C}: \min_{t\in[0,T]}\text{Im}\,\l(t) \leq \text{Im}(z) \leq \max_{t\in[0,T]}\text{Im}\,\l(t)\}.
\end{align*}
Then
$$L_{T,\l} \subset V_{\l} \; \; \text{ and } \;\;  R_{T,\l} \subset H_{\l}. $$
\end{lemma}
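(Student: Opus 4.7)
The plan is to prove both inclusions by monotonicity arguments on the real and imaginary parts of $g_t(z)$, mirroring the style of the proof of Lemma \ref{lowerlemma}. For the first inclusion, I argue by contrapositive: if $\text{Re}(z) > M := \max_{t\in[0,T]} \text{Re}\,\l(t)$, the goal is to show $T_z > T$, which places $z$ outside $L_{T,\l}$; the case $\text{Re}(z) < m := \min_{t \in [0,T]} \text{Re}\,\l(t)$ is handled symmetrically.

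Fix such a $z$ and let $x(t) = \text{Re}\,g_t(z)$. Separating real parts in \eqref{cle} gives
$$x'(t) = \frac{2\bigl(x(t) - \text{Re}\,\l(t)\bigr)}{|g_t(z) - \l(t)|^2}.$$
Since $x(0) > M \geq \text{Re}\,\l(t)$ for every $t$, a continuity/bootstrap argument forces $x(t) > M$ throughout $[0,T_z \wedge T)$: if $t_0$ were the first time with $x(t_0) = M$, then on $[0,t_0)$ we would have $x'(t) > 0$, so $x(t_0) \geq x(0) > M$, a contradiction. Hence $x(t) > \text{Re}\,\l(t)$ strictly on the whole lifetime, so $g_t(z) \neq \l(t)$, and taking $t \to T_z^-$ shows that $T_z$ cannot satisfy $T_z \leq T$. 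This establishes $L_{T,\l} \subset V_\l$.

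For the inclusion $R_{T,\l} \subset H_\l$, I run the Loewner flow backwards. Given $w$ with $\text{Im}(w) > M' := \max_t \text{Im}\,\l(t)$, the aim is to produce a point $z \notin L_{T,\l}$ with $g_T(z) = w$, which gives $w \notin R_{T,\l}$. Solve
$$f'(s) = -\frac{2}{f(s) - \l(T-s)}, \qquad f(0) = w, \qquad s \in [0,T].$$
Letting $y(s) = \text{Im}\,f(s)$ yields $y'(s) = \frac{2(y(s) - \text{Im}\,\l(T-s))}{|f(s)-\l(T-s)|^2}$, and the same monotonicity argument shows $y(s) > M'$ as long as $f$ exists; in particular $f(s) \neq \l(T-s)$, so $f$ extends to all of $[0,T]$. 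Setting $z = f(T)$, the reversed function $t \mapsto f(T-t)$ satisfies the forward Loewner equation with initial value $z$ and therefore agrees with $g_t(z)$ on $[0,T]$ by ODE uniqueness. This gives $T_z > T$ and $g_T(z) = f(0) = w$, as desired. The case $\text{Im}(w) < \min_t \text{Im}\,\l(t)$ is symmetric.

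The main technical point is the monotonicity bootstrap: the continuity of $x$ (or $y$) must be combined with the strict sign of the Loewner ODE's right-hand side to prevent the solution from ever returning to the boundary of the strip. The one new wrinkle beyond Lemma \ref{lowerlemma} is the backwards-flow construction used for $R_{T,\l}$, where one must check both that the candidate $z = f(T)$ survives up to time $T$ and that it is indeed mapped by $g_T$ onto the prescribed $w$; both follow at once from the identification of $t \mapsto f(T-t)$ with the forward flow from $z$.
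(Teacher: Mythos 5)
Your proof of $L_{T,\lambda}\subset V_\lambda$ is essentially the paper's own argument: compute $\partial_t\,\text{Re}\,g_t(z)$, observe the sign is strictly positive while the trajectory sits to the right of the strip, and bootstrap. For $R_{T,\lambda}\subset H_\lambda$, however, you take a genuinely different route. The paper dispatches this inclusion in one line via the duality property $R_{T,\lambda}=iL_{T,\tilde\lambda}$ with $\tilde\lambda(s)=-i\lambda(T-s)$, reusing the first inclusion applied to $\tilde\lambda$ and then rotating by $i$. You instead build the time-reversed flow $f'(s)=-2/(f(s)-\lambda(T-s))$ from scratch, show $\text{Im}\,f$ stays bounded away from $\max_t\text{Im}\,\lambda(t)$ (which also gives the uniform lower bound on $|f(s)-\lambda(T-s)|$ needed to run $f$ for all of $[0,T]$), and identify $t\mapsto f(T-t)$ with the forward flow from $z=f(T)$ by ODE uniqueness. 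Both arguments are correct; the paper's is shorter because it leans on a property already stated in Section 2, while yours is self-contained and in effect re-derives the piece of duality that is actually needed here. Given that duality is available in the paper, the authors' version is more economical, but your explicit backward-flow construction makes the mechanism transparent and would be the natural thing to do if duality had not already been recorded.
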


\begin{proof}
If $w\in\mathbb{C}\setminus V_{\l}$, then either Re$(w) > \max_{t\in[0,T]}\text{Re}\,\l(t)$ or Re$(w) < \min_{t\in[0,T]}\text{Re}\,\l(t)$. 
We will consider the first case; the second case is similar.
Note that
$$\partial_t \text{Re}\,g_t(z) 
= 2\frac{\text{Re}\,g_t(z) - \text{Re}\,\l(t)}{|g_t(z)-\l(t)|^2}. $$
Therefore,
$\partial_t \text{Re} \, g_t(w) > 0$ as long as $g_t(w)$ remains to the right of $V_\l$.
This means it is impossible for $g_t(w)$ to move left in order to enter $V_\l$.
Since $\l(t) \in V_\l$, we must have that  $g_t(w) \neq \l(t)$ for all $t \in[0,T]$
which implies that $w \notin L_{T,\l}$.  Hence $L_{T,\l} \subset V_{\l}$. 

Now, let $\Tilde{\l}(s) = -i\l(T-s)$. 
Applying the first result to $\Tilde{\l}$ gives that $L_{T,\Tilde{\l}} \subset V_{-i\l}$. By the duality property, 
$$R_{T,\l} = iL_{T,\Tilde{\l}} \subset iV_{-i\l} = H_{\l}. $$
\end{proof}

\begin{proof}[Proof of Proposition \ref{simplecurve1}]

By the reflection property, we may assume that  Arg$(c) \in [0, \frac{\pi}{2}]$.  
Further, since  the behavior when $c \in \mathbb{R}$ is well understood, we may assume Arg$(c) \in (0, \frac{\pi}{2}]$.

In this proof, we will be considering the hulls generated by $\lambda |_{[a,b]}$ for different time intervals $[a,b]$, and so we introduce a more convenient notation for this.
Let $L_{[a,b]}$ be the left hull $L_{b-a, \l(a+ \cdot)}$ which is generated by $\l(a+ \, \cdot)$ at time $b-a$,
and let $R_{[a,b]}$ be the corresponding right hull. 
Let $g_{[a,b]} : \mathbb{C} \setminus L_{[a,b]} \to \mathbb{C} \setminus R_{[a,b]}$ be the associated conformal map generated by \eqref{cle}.

Since $\l $ is differentiable on $[0,s]$, there exists $\delta>0$ so that $\l$ is Lip$(1/2)$ with norm at most $1/3$ on any subinterval of $[0,s]$ with length $\delta$.  
Subdivide $[0,s]$ into intervals  $[t_{k-1}, t_{k}]$ with $0=t_0 < t_1 < \cdots < t_n = s$ and $ t_{k}-t_{k-1} \leq \delta$.
By Theorem \ref{HuyTheorem}, the  hulls $L_{[t_{k-1}, t_{k}]} $ and $R_{[t_{k-1}, t_{k}]}$ are simple curves.
We wish to follow a standard argument in the real-valued case which uses the concatenation property to build $L_s = L_{[0,s]}$ out of the conformal images of the curves $L_{[t_{k-1}, t_k]}$.  However, we will need to take a little more care with this argument in the complex-valued case.

We will use induction to prove that $L_{[t_k,s]}$ is a simple curve, starting with $k=n-1$ and decreasing to $k=0$.
As mentioned above, Theorem \ref{HuyTheorem} gives  the base case that $L_{[t_{n-1},s]}$ is a simple curve.
For our inductive step, we assume that  $L_{[t_k,s]}$ is a simple curve, and we must show that $L_{[t_{k-1},s]}$ is a simple curve.
Assume for the moment that $L_{[t_{k},s]} \setminus \{\l(t_{k})\}$ does not intersect $R_{[t_{k-1}, t_k]}$. 
Then the concatenation property will imply that 
$$ L_{[t_{k-1}, s]} = L_{[t_{k-1}, t_k]} \cup g_{[t_{k-1}, t_k]}^{-1} \left( L_{[t_k,s]} \setminus \{ \l(t_{k}) \} \right). $$
Since $L_{[t_k,s]} \setminus \{ \l(t_{k}) \}$ is a union of two simple curves in $\mathbb{C} \setminus R_{[t_{k-1}, t_k]}$ that approach $\l(t_k)$, 
the conformal image $g_{[t_{k-1}, t_k]}^{-1} \left( L_{[t_k,s]} \setminus \{ \l(t_{k}) \} \right)$ will be two simple curves in $\mathbb{C} \setminus L_{[t_{k-1}, t_k]}$ that approach the two tips of $L_{[t_{k-1}, t_k]} $.  
Gluing these with $L_{[t_{k-1}, t_k]} $ gives that $L_{[t_{k-1},s]}$ is a simple curve, as illustrated in Figure \ref{InductionStep}.

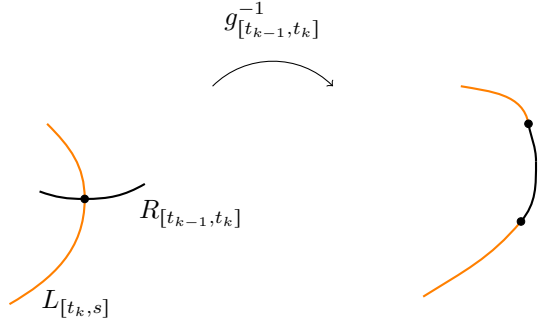
\begin{figure}
\centering
\begin{tikzpicture}

\draw[thick,orange] (0,0) to [out=90,in=-45] (-0.5,1);
\draw[thick] (0,0) to [out=180,in=-20] (-0.6, 0.1);
\draw[thick,orange] (0,0) to [out=-90,in=30] (-1,-1.4);
\draw[thick] (0,0) to [out=0,in=-150] (0.8, 0.2);
\node[right] at (0.6, -0.2) {$R_{[t_{k-1}, t_k]}$};

\draw[fill] (0,0) circle [radius=0.05];
\node[right] at (-0.7, -1.4) {$L_{[t_k,s]} $};

\draw[->] (1.7,1.5) to [out=45,in=135] (3.3,1.5);
\node[above] at (2.5,2) {$g_{[t_{k-1}, t_k]}^{-1}$};

\draw[thick,black] (6,0.5) to [out=90, in=-80] (5.9,1);
\draw[thick,black] (6,0.5) to [out=-90, in=50] (5.8,-0.3);

\draw[thick, orange] (5.9,1) to [out=100, in=-10] (5,1.5) ;
\draw[thick,orange] (5.8,-0.3) to [out=230, in=32] (4.5,-1.3) ;

\draw[fill] (5.9,1) circle [radius=0.05];
\draw[fill] (5.8,-0.3) circle [radius=0.05];

\end{tikzpicture}
\caption{
An illustration of the concatenation property used in the proof of Proposition \ref{simplecurve1}:  
the curve $L_{[t_{k-1}, s]}$ on the right is a union of $L_{[t_{k-1}, t_k]}$ (shown in black) and $g_{[t_{k-1}, t_k]}^{-1} \left( L_{[t_k,s]} \setminus \{\l(t_k) \} \right)$ (shown in orange).
}\label{InductionStep}
\end{figure}

To establish the first statement of (i),
it remains to show that $L_{[t_{k},s]} \setminus \{\l(t_{k})\}$ does not intersect $R_{[t_{k-1}, t_k]}$ when Re$(\alpha) \neq 0$.
By way of contradiction, assume that Re$(\alpha) \neq 0$
and $L_{[t_{k},s]} \setminus \{\l(t_{k})\}$ does intersect $R_{[t_{k-1}, t_k]}$.
Let $\sigma > t_k$ be the first time that $L_{[t_{k},\sigma]} \setminus \{\l(t_{k})\}$ intersects $R_{[t_{k-1}, t_k]}$, 
and let $\tau< t_k $ be the largest time so that $L_{[t_{k},\sigma]} \setminus \{\l(t_{k})\}$ intersects $R_{[\tau, t_k]}$.
Then $L_{[t_{k},\sigma]}$ contains a curve that starts at $\l(t_k)$ and ends at one of the tips of $R_{[\tau, t_k]}$.  (To visualize this, think of adapting the left picture in Figure \ref{InductionStep} so that one tip of the orange curve intersects one tip of the black curve.)
The conformal image of this curve under $g_{[\tau, t_k]}$ is a curve that starts at one tip of $L_{[\tau, t_k]}$ and ends at $\l(\tau)$.
Since the concatenation property implies that
$$ L_{[\tau, \sigma]} = L_{[\tau, t_k]} \cup g_{[\tau, t_k]}^{-1} \left( L_{[t_k, \sigma]} \setminus \{ \l(t_{k}) \} \right), $$ 
we find that $L_{[\tau, \sigma]}$ contains a loop that starts and ends at $\l(\tau)$.  (To visualize, imagine adapting the right picture in Figure \ref{InductionStep} so that one tip of the orange curve intersects the middle of the black curve.)

Now recall that $L_{[\tau, \sigma]}$ is the left hull generated by $\l_{[\tau, \sigma]}(t) = c\sqrt{1-\tau - t}$ at time $t=\sigma-\tau$.
Notice that when we apply Brownian scaling to $ \l_{[\tau, \sigma]}$, we obtain our original driving function $\l$, i.e.
\begin{align*}
\hat \lambda(t) &:= \frac{1}{\sqrt{1-\tau}} \l_{[\tau, \sigma]}\left( (1-\tau)t \right) \\
  &=  \frac{1}{\sqrt{1-\tau}} c\sqrt{ 1-\tau - (1-\tau) t } \\ 
  &= c\sqrt{1-t}
\end{align*}
Thus the scaling property implies that $L_{[0, r]}$
for $r= (\sigma-\tau)/(1-\tau)$ is a scaled version of $L_{[\tau, \sigma]}$.
In particular, $L_{[0, r]}$ contains a simple closed curve through $c = \l(0)$.
We can parametrize this curve by $z_t$ satisfying \eqref{TipEquation}
so that as $t \nearrow r$, we have that $z_t \to c$.
Note that this curve can encircle both of the points $A, B$, one of these points, or neither.  Thus the winding number of the curve around $A$ (resp. $B$) can be 1, 0, or $-1$.  If the winding number around both $A$ and $B$ is non-zero, then it must be same for both points.

We fix a branch of the logarithm for the moment and plug $t=r \neq 0$ and $z_t = c$ into \eqref{TipEquation} to obtain
\begin{align*}
 \alpha\log B + \beta\log A + \log\sqrt{1-r} 
 &= \alpha\left[\log(c-A) +i 2\pi p  \right]+ \beta\left[\log(c-B) + i 2\pi q \right] \\ \nonumber
 &= \alpha\left[ \log B +i 2\pi p \right] + \beta\left[\log A + i 2\pi q \right],
 \end{align*}
 where $p, q \in \{-1, 0, 1\}$ and if both are nonzero, then $p=q$.
 Simplifying the above equation yields
 \begin{equation}\label{cc}
    \log \sqrt{1-r} = i 2\pi p \alpha + i 2 \pi q \beta.
 \end{equation}
 The imaginary part of \eqref{cc} gives 
 $$ 0= 2 \pi p \text{Re}(\alpha) + 2 \pi q \text{Re}(\beta). $$
 Note that Re$(\alpha) \neq 0$ by assumption and Re$(\beta) \neq 0$ by Lemma \ref{parameterlemma}.  
 If $p, q$ are both nonzero, then $p=q$ which implies that Re$(\alpha) + \text{Re}(\beta) = 0$.
 However, this contradicts the fact that $\alpha + \beta = 1$.
 Thus one of $p,q$ must be zero, and this immediately implies both must be zero.
Then equation \eqref{cc} becomes
 $$\log\sqrt{1-r} = 0,$$
which yields a contradiction since $r \neq 0$.
This completes the contradiction proof showing that
that $L_{[t_{k},s]} \setminus \{\l(t_{k})\}$ does not intersect $R_{[t_{k-1}, t_k]}$ when Re$(\alpha) \neq 0$
and subsequently establishes the first statement of (i).
The last statement of (i) follows from Tran's Theorem \ref{HuyTheorem} and the concantenation property.  

Further, Tran's Theorem \ref{HuyTheorem} and the concatenation property show  that 
there is $\epsilon >0$ so that for every $z$ with $T_z \leq s$ 
 when $t \in [T_z-\epsilon, T_z)$ then $g_{t}(z)$ is in the simple curve $L_{[t, t+\epsilon]} =: \gamma_t[-\epsilon,\epsilon]$.
Thus we can define the upper left hull $L_s^+$ as all the points $z \in L_s$ with  
$g_t(z)$ in the upper curve $\gamma_t[0,\epsilon]$ for $t$ close enough to $T_z$.
Similarly  the lower left hull $L_s^-$ is all  $z \in L_s$ 
with $g_t(z) \in \gamma_t[-\epsilon, 0]$ for $t$ close enough to $T_z$.
Statement (ii) will follow once we show that 
$L^-_{[\tau,s]} \setminus \{\l(\tau)\}$ does not intersect $R_{[0, \tau]}$ for any $\tau \in (0,s)$.
From Tran's work in \cite{T}, the lower curve $\gamma_t[-\epsilon, 0)$ will be in the open halfplane below the horizontal line $y = \text{Im }\l(t)$.
Therefore for $z \in L_{[\tau,s]}^-$ and $t$ close enough to $T_z$, we must have that $\text{Im } g_t(z) < \text{Im } \l(t)$.
Lemma \ref{lowerlemma} then implies that $\text{Im } z <\text{Im }\l(\tau)$.
Hence $L^-_{[\tau,s]}\setminus \{\l(\tau)\}$ is in the open halfplane below the horizontal line $y = \text{Im } \l(\tau)$. 
From Lemma \ref{HullLocation}, 
$R_{[0, \tau]}$ is in the closed halfplane above this line.  
Therefore $L^-_{[\tau,s]} \setminus \{\l(\tau)\}$ does not intersect $R_{[0, \tau]}$ for any $\tau \in (0,s)$.

\end{proof}

As a consequence to this proposition, when Re$(\alpha) \neq 0$ and $t <1$, 
we know that there are exactly two points in $L_t \setminus \bigcup_{s \in [0,t)} L_s$ and these are $\gamma(t)$ and $\gamma(-t)$.

\subsection{Behavior as $t$ approaches 1 }

From Proposition \ref{simplecurve1} we know that for $t <1$, the left hull is a simple curve  when Re$(\alpha) \neq 0$.
We now address the behavior of this curve as $t$ approaches 1.

\begin{prop}\label{AB}
Let $c \in \mathbb{C}$ with Arg$(c) \in [0,\pi/2]$, let $\l(t) = c \sqrt{1-t}$,
and let $\gamma $  be defined as in Proposition \ref{simplecurve1}. 
\begin{enumerate}
\item[(i)] When Re$(\alpha) > 0$, the curve  $\gamma(t)$ approaches $A$ as $t \to 1$,
and when Re$(\alpha) < 0$, the curve  $\gamma(t)$ approaches $B$ as $t \to 1$.
\item[(ii)] As $t \to 1$, the curve  $\gamma(-t)$ approaches $B$. 
\end{enumerate}
Moreover, when Arg$(c) \in(0,\pi/2)$, then $\gamma(t),\gamma(-t)$ approach their limit points through an infinite spiral.
\end{prop}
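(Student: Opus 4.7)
The plan is to extract asymptotic information from the implicit tip equation \eqref{TipEquation}, combine it with the half-plane bounds from Lemmas \ref{lowerlemma} and \ref{HullLocation}, and close the last case with a continuity-in-$c$ argument. By the reflection property we may restrict to $\text{Arg}(c) \in (0, \pi/2]$.

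First I would localize the possible limit points of $z_t$ as $t \to 1^-$. Lemma \ref{HullLocation} keeps $z_t$ in a bounded strip, while the left-hand side of \eqref{TipEquation} has real part $\log\sqrt{1-t} + O(1) \to -\infty$. Consequently the real part of $\alpha\log(z_t - A) + \beta\log(z_t - B)$ must also tend to $-\infty$, which forces $z_t$ to accumulate at $A$ or $B$. Substituting the ansatz $z_t = A + \epsilon_t$ into \eqref{TipEquation} and using $\log(\epsilon_t + A - B) \to \log(A-B)$ yields the local asymptotic
\begin{equation*}
\log(z_t - A) \;=\; \tfrac{1}{2\alpha}\log(1-t) \;+\; C_A \;+\; o(1),
\end{equation*}
with an analogous $\log(z_t - B) = \tfrac{1}{2\beta}\log(1-t) + C_B + o(1)$ near $B$. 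Taking real parts gives $|z_t - A| \sim (1-t)^{\text{Re}(\alpha)/(2|\alpha|^2)}$ and $|z_t - B| \sim (1-t)^{\text{Re}(\beta)/(2|\beta|^2)}$, so $z_t \to A$ is consistent exactly when $\text{Re}(\alpha) > 0$, while $z_t \to B$ is always consistent by Lemma \ref{parameterlemma}.

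Next I would identify $\gamma(-t) \to B$ using the horizontal-line bound already derived in the proof of Proposition \ref{simplecurve1}: the lower hull lies strictly below $y = \text{Im}(c)$. Lemma \ref{parameterlemma} gives $\text{Im}(A) \geq \text{Im}(c) > 0 \geq \text{Im}(B)$ for $\text{Arg}(c) \in (0, \pi/2)$, so $A$ is inaccessible and hence $\gamma(-t) \to B$; the boundary case $\text{Arg}(c) = \pi/2$ is handled by a limiting argument. This proves (ii), and together with the first step also settles (i) when $\text{Re}(\alpha) < 0$, since there the upper tip is forced to $B$ as well. The main obstacle is the remaining case $\text{Re}(\alpha) > 0$, where both $A$ and $B$ are a priori admissible limits for $\gamma(t)$. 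I would handle it by continuity in $c$: the set $\{c : \text{Re}(\alpha(c)) > 0, \text{Arg}(c) \in (0, \pi/2]\}$ is connected and has $(0,4) \subset \mathbb{R}$ on its boundary, where Proposition \ref{KNKprop}(i) (together with Schwarz reflection) identifies the upper endpoint as the root in the upper half-plane, namely $A$. Using continuous dependence of $g_t^{-1}(\l(t) + iy)$ on $c$, together with the fact that $A(c)$ and $B(c)$ remain distinct throughout the region since $c \neq \pm 4$, the two-valued assignment $c \mapsto \lim_{t \to 1} \gamma(t) \in \{A(c), B(c)\}$ is locally constant and therefore identically $A(c)$.

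The spiral claim follows from the imaginary parts of the asymptotic formulas. From the local expansion near $A$ we get
\begin{equation*}
\arg(z_t - A) \;=\; -\tfrac{\text{Im}(\alpha)}{2|\alpha|^2}\log(1-t) \;+\; O(1),
\end{equation*}
which diverges as $t \to 1^-$ precisely when $\text{Im}(\alpha) \neq 0$, i.e.\ by Lemma \ref{parameterlemma} when $\text{Arg}(c) \in (0, \pi/2)$. The analogous formula for $z_t - B$ uses $\text{Im}(\beta) = -\text{Im}(\alpha)$ and produces infinite winding in the same regime, yielding the infinite spiral for both $\gamma(t)$ and $\gamma(-t)$. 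The main technical hurdle is the continuity-in-$c$ step, which rests on standard continuous dependence of Loewner hulls on their driving functions together with the observation that the two candidate limits $A(c), B(c)$ never collide inside the region.
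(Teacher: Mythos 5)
Your route differs from the paper's in two useful ways, and both of your deviations are correct. First, the paper does not use the asymptotic/ansatz argument: after taking real parts of \eqref{TipEquation} it only concludes that the limit must be $A$ or $B$, and then determines which one via continuity in $c$ and the real case (Proposition \ref{KNKprop}) in \emph{every} case. Your observation that $|z_t-A|\sim(1-t)^{\text{Re}(\alpha)/(2|\alpha|^2)}$ rules out $z_t\to A$ outright when $\text{Re}(\alpha)<0$, which settles part (i) for $\text{Re}(\alpha)<0$ (and part (ii) there as well) without any appeal to continuity. Second, your geometric argument for part (ii) — that the lower tip is trapped below $y=\text{Im}(c)$ while $\text{Im}(A)>\text{Im}(c)$ by $A+B=c$ and $\text{Im}(B)<0$ — is not in the paper; the paper again invokes continuity in $c$ and the real case. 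This is a nice simplification, though you should spell out the limiting argument you invoke at $\text{Arg}(c)=\pi/2$. Your spiral argument, while phrased via the local expansion rather than the imaginary part of \eqref{TipEquation} directly, is essentially the paper's.

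The genuine gap is the continuity-in-$c$ step for $\gamma_c(1)$ when $\text{Re}(\alpha)>0$. You describe it as ``standard continuous dependence of Loewner hulls on their driving functions,'' but standard continuous-dependence results hold only at fixed finite times. What you need is continuity of the \emph{limit} $\lim_{t\to 1}\gamma_c(t)$ in $c$, which requires uniform control as $t\to 1$ — exactly where the driving function's derivative blows up and the curve spirals infinitely. Your local expansion $\log(z_t-A)=\tfrac{1}{2\alpha}\log(1-t)+C_A+o(1)$ does not close this gap on its own: the constant $C_A$ absorbs branch/winding choices of the logarithm and is not obviously continuous in $c$, and without uniformity of the $o(1)$ term across $c$ you cannot conclude that $c\mapsto\gamma_c(1)$ is continuous merely from the finite-time convergence at each fixed $c$. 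The paper handles this with a nontrivial device: Lemma \ref{CurveAtTime1} builds a holomorphic motion $h(\zeta,w)=\gamma_{c_0+r\zeta}(\sigma(w))$ and applies the extended $\lambda$-lemma (Theorem \ref{holomotion}) to get joint continuity in $(\zeta,w)$, from which existence and continuity of $\gamma_c(1)$ follow. You would need either to reproduce that argument or to establish, say, uniform convergence of the asymptotic on compact $c$-sets; as written, the locally-constant claim rests on an unproven assertion.
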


First we show that the limits of $\gamma(t), \gamma(-t)$ are well-defined as $t$ approaches 1.

\begin{lemma}\label{CurveAtTime1}
Let $c \in \mathbb{C}$, let $\l(t) = c \sqrt{1-t}$,
and let $\gamma = \gamma_c$  be defined as in Proposition \ref{simplecurve1}. 
\begin{enumerate}
\item[(i)] When Re$(\alpha) \neq 0$, then $\displaystyle \lim_{t \nearrow 1} \gamma_c(t)$ exists and is continuous in $c$.
\item[(ii)] If Re$(\alpha) \neq 0$ or if Arg$(c) \in (0, \pi/2)$, then $\displaystyle \lim_{t \nearrow 1} \gamma_c(-t)$ exists and is continuous in $c$.
\end{enumerate}
\end{lemma}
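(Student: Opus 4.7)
The plan is to prove existence and continuity simultaneously through a holomorphic motion argument anchored at a real base point, with a direct analysis of the implicit equation~\eqref{TipEquation} to cover the phase-transition piece in part~(ii).

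For each connected component $\Omega$ of $\{c \in \mathbb{C}\setminus\{\pm 4\} : \operatorname{Re}\alpha(c) \neq 0\}$, choose a real base point $c_0 \in \Omega \cap \mathbb{R}$ (available by taking $c_0 \in (0,4)$ or $c_0 \in (4,\infty)$). By Proposition~\ref{KNKprop} together with Schwarz reflection, the curve $\gamma_{c_0}:[-1,1] \to \mathbb{C}$ is a continuous simple arc with both endpoints $\gamma_{c_0}(\pm 1)$ existing. Setting $E = \gamma_{c_0}((-1,1))$, I would define $h: \Omega \times E \to \mathbb{C}$ by $h(c, \gamma_{c_0}(t)) = \gamma_c(t)$ and verify that $h$ is a holomorphic motion: $h(c_0, \cdot) = \operatorname{id}$ by construction, $h(c,\cdot)$ is injective by Proposition~\ref{simplecurve1}(i), and $c \mapsto \gamma_c(t)$ is holomorphic on $\Omega$ for each fixed $t$, because the implicit equation~\eqref{implicitSolution} is analytic in $c$ and the implicit function theorem applies off the critical value. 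The $\lambda$-lemma then extends $h$ to a continuous motion $\bar h: \Omega \times \bar E \to \mathbb{C}$, and continuity at the endpoints $\gamma_{c_0}(\pm 1) \in \bar E$ forces
\[ \bar h(c, \gamma_{c_0}(\pm 1)) \;=\; \lim_{t \nearrow 1} \bar h(c, \gamma_{c_0}(t)) \;=\; \lim_{t \nearrow 1} \gamma_c(\pm t), \]
so the limit exists and depends continuously on $c \in \Omega$. This establishes~(i) and the portion of~(ii) with $\operatorname{Re}\alpha \neq 0$.

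For the remaining case in~(ii), where $\operatorname{Re}\alpha(c) = 0$ with $\operatorname{Arg}(c) \in (0, \pi/2)$, I would argue directly from~\eqref{TipEquation}. Let $z_t = \gamma_c(-t)$ and choose continuous branches of the logarithms along the lower curve. Lemma~\ref{lowerlemma} confines $z_t$ to the halfplane $\{\operatorname{Im} z < \operatorname{Im}\lambda(t)\}$, which contracts to the closed lower halfplane as $t \to 1$. Since $\operatorname{Im}(A) > 0$ strictly by Lemma~\ref{parameterlemma}, $z_t$ is bounded away from $A$, so $\log|z_t - A|$ and $\arg(z_t - A)$ remain bounded. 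On the phase transition $\operatorname{Re}(\alpha) = 0$ we have $\operatorname{Re}(\beta) = 1$, so taking real parts in~\eqref{TipEquation} forces $\log|z_t - B| \to -\infty$, that is, $z_t \to B$. Continuity of $\gamma_c(-1) = B(c)$ across the phase transition then follows from holomorphicity of $B(c)$, once one checks (using the holomorphic motion anchored at $c_0 \in (0, 4)$) that the limit on the $\operatorname{Re}\alpha > 0$ side is also $B(c)$ rather than $A(c)$.

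The main obstacle is justifying the holomorphic-motion hypothesis rigorously, namely that $c \mapsto \gamma_c(t)$ is genuinely holomorphic for each fixed $t \in (-1,1)$. This requires a coherent choice of logarithm branches in~\eqref{implicitSolution} over the connected component $\Omega$ and a uniform application of the implicit function theorem away from the critical value, which in turn relies on the qualitative geometric picture that $\gamma_c(t)$ varies injectively as $c$ varies within $\Omega$. A secondary delicate point is matching the limit across the phase-transition curve in part~(ii), where the analytic structure changes character (the curve ceases to spiral into two distinct points); the uniform value $B(c)$ of the limit --- independent of whether one approaches from $\operatorname{Re}\alpha > 0$, $\operatorname{Re}\alpha < 0$, or along the transition itself --- is precisely what permits the continuity statement.
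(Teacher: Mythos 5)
Your argument for part (i) and for the $\operatorname{Re}(\alpha) \neq 0$ piece of part (ii) is essentially the paper's: a holomorphic motion of the curve, parametrized by $c$, with $\partial_c \gamma_c(t)$ computed implicitly from \eqref{TipEquation} to verify holomorphicity, and the $\lambda$-lemma (Theorem \ref{holomotion}) supplying the jointly continuous extension. You are in fact more explicit than the paper about one point that the paper leaves implicit: to pass from ``the cluster set of $\gamma_c(t)$ is a quasiconformal image of the cluster set of $\gamma_{c_0}(t)$'' to ``$\lim_{t\nearrow1}\gamma_c(t)$ exists,'' one needs the limit to exist for at least one anchor $c_0$ in each connected component of $\Omega$; your choice $c_0 \in (0,4)$ or $c_0 \in (4,\infty)$, where Proposition \ref{KNKprop} applies, supplies exactly this. (A small technical mismatch: Theorem \ref{holomotion} as stated takes $\mathbb{D}$ as the parameter space, not $\Omega$; you would need the disc-exhaustion that the paper uses, or an extension of the $\lambda$-lemma to simply connected domains.)

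The gap is in your treatment of the phase-transition case $\operatorname{Re}(\alpha)=0$, $\operatorname{Arg}(c)\in(0,\pi/2)$. You argue from the real part of \eqref{TipEquation} that, once the $(z_t-A)$-terms are controlled, the divergence of $\log\sqrt{1-t}$ forces $\log|z_t-B|\to-\infty$. But the right-hand side also contains the term $-\operatorname{Im}(\beta)\,\operatorname{Arg}(z_t-B)$, and since $\operatorname{Im}(\beta)<0$ strictly (Lemma \ref{parameterlemma}) this term can itself tend to $-\infty$ if $z_t$ winds clockwise around $B$ indefinitely without approaching it; confinement to the lower halfplane does not exclude this, because $B$ lies strictly in the lower halfplane and a small annulus around $B$ stays there. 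So the real-part equation alone does not yield existence of the limit --- it only shows that either $|z_t-B|\to0$ or $\operatorname{Arg}(z_t-B)\to-\infty$. (This is exactly why the paper's Proposition \ref{AB} first invokes Lemma \ref{CurveAtTime1} to get existence, and only then uses \eqref{TipEquation} to identify the limit; your direct argument is trying to establish existence and identification at once, which is where it breaks.) The paper instead closes the phase-transition case by re-running the holomorphic-motion argument on the lower curve alone, which by Proposition \ref{simplecurve1}(ii) remains simple across $\operatorname{Re}(\alpha)=0$, so that the motion is defined on a neighborhood in $c$-space that straddles the transition curve. You should replace the direct \eqref{TipEquation} argument with that; the identification of the common limit as $B(c)$ and the consequent continuity across the transition, which you correctly anticipate, then belongs to Proposition \ref{AB} rather than to this lemma.
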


We will use the notion of holomorphic motion to prove this result.
A holomorphic motion of $E \subset \mathbb{C}$ is a map $h: \mathbb{D} \times E \to \mathbb{C}$ with the following three properties:
\begin{enumerate}
\item For any fixed $w \in E$, the map $\zeta \mapsto h(\zeta, w)$ is holomorphic in $\mathbb{D}$.
\item For any fixed $\zeta \in \mathbb{D}$, the map $w \mapsto h(\zeta, w)$ is an injection.
\item The mapping  $w \mapsto h(0, w)$ is the identity on $E$.
\end{enumerate}
The following important theorem about holomorphic motions is due to \cite{MSS} and \cite{S}.
\begin{thm}\label{holomotion}
If $h: \mathbb{D} \times E$ is a holomorphic motion, then $h$ has an extension to $H: \mathbb{D} \times \mathbb{C} \to \mathbb{C}$ so that
$H$ is a holomorphic motion of $\mathbb{C}$, each map $w \mapsto  H(\zeta, w)$ is quasisymetric, and $H$ is jointly continuous in $(\zeta, w)$.
\end{thm}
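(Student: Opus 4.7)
The plan is to transport the existence of endpoint limits from real base points (where Proposition \ref{KNKprop} already supplies them) to complex parameters via a holomorphic motion, then use Theorem \ref{holomotion} to extract joint continuity, which gives both existence of the limit and its continuity in $c$ in one stroke.

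For part (i) with Re$(\alpha(c)) > 0$, I would first select a simply connected open set $U$ containing both $c$ and some base point $c_0 \in (0,4)$, with $U$ contained in the connected component of $\{c' : \text{Re}(\alpha(c')) > 0\}$ that meets the real axis. Fix a biholomorphism $\phi : \mathbb{D} \to U$ with $\phi(0) = c_0$, and set $E = \gamma_{c_0}((-1,1))$, the open portion of the base curve, which deliberately omits the two endpoints. Define
$$h : \mathbb{D} \times E \to \mathbb{C}, \qquad h\bigl(\zeta, \gamma_{c_0}(s)\bigr) = \gamma_{\phi(\zeta)}(s).$$
The three holomorphic-motion axioms would be checked as follows: holomorphy in $\zeta$ for each fixed $s \in (-1,1)$ by applying the implicit function theorem to the tip equation \eqref{TipEquation} (after choosing single-valued branches of $\sqrt{c^2-16}$ and of the relevant logarithms on the simply connected $U$); injectivity of $w \mapsto h(\zeta,w)$ on $E$ by Proposition \ref{simplecurve1}(i), which gives that $\gamma_{\phi(\zeta)}$ is a simple curve on $(-1,1)$; and $h(0,w) = w$ because $\phi(0) = c_0$. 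The choice to omit the endpoints from $E$ is essential: defining the motion there would require the very limits we are trying to prove exist.

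Theorem \ref{holomotion} then supplies a jointly continuous extension $H : \mathbb{D} \times \mathbb{C} \to \mathbb{C}$. Since Proposition \ref{KNKprop}(i) yields $\gamma_{c_0}(s) \to P := \gamma_{c_0}(1)$ as $s \to 1^-$, joint continuity of $H$ gives
$$\lim_{s \to 1^-} \gamma_{\phi(\zeta)}(s) \;=\; \lim_{s \to 1^-} H\bigl(\zeta, \gamma_{c_0}(s)\bigr) \;=\; H(\zeta, P),$$
so the limit exists for every $c = \phi(\zeta) \in U$ and depends continuously on $c$. The Re$(\alpha(c)) < 0$ case is the same with $c_0 \in (4,\infty)$ as base point; at such $c_0$ the curve $\gamma_{c_0}$ closes into a loop at $B(c_0)$ by Proposition \ref{KNKprop}(ii), but because the endpoints lie outside $E$, the loop closure causes no difficulty. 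Part (ii) runs in parallel using $E = \gamma_{c_0}((-1, 0])$, the lower half-curve; for the delicate subcase Arg$(c) \in (0, \pi/2)$ with Re$(\alpha(c)) = 0$, I would select the base point $c_0$ near $c$ but off the phase transition, so that the already-proven part (i) supplies convergence of $\gamma_{c_0}(-t)$, while Proposition \ref{simplecurve1}(ii) furnishes injectivity of the lower hull throughout $U$ (even though $U$ may straddle the phase transition curve).

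The main obstacle is selecting the parameter region $U$ well. It must be simply connected, contain $c$, meet a real interval where Proposition \ref{KNKprop} gives a base-point limit, and lie in $\{\text{Re}(\alpha) \neq 0\}$ with consistent sign so that Proposition \ref{simplecurve1}(i) applies uniformly and so that a single-valued branch of $\sqrt{c^2-16}$ can be chosen on $U$. This requires a careful reading of the connected components of $\{c' : \text{Re}(\alpha(c')) \neq 0\}$ depicted in Figure \ref{ReAlphaIs0} and confirming each admits such a base point; the borderline subcase of part (ii) needs extra care because there $U$ unavoidably crosses the phase transition curve.
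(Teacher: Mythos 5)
Your proposal is not a proof of the statement at all. Theorem \ref{holomotion} is Slodkowski's extended $\lambda$-lemma (building on Mañé--Sad--Sullivan): it asserts that any holomorphic motion of an arbitrary set $E \subset \mathbb{C}$ extends to a holomorphic motion of all of $\mathbb{C}$ with quasisymmetric time-slices and joint continuity. This is a deep, self-contained result in quasiconformal geometry; it has nothing specifically to do with Loewner curves, the driving function $c\sqrt{1-t}$, the tip equation \eqref{TipEquation}, or Proposition \ref{KNKprop}. The paper does not prove it either — it is a cited black box, attributed to \cite{MSS} and \cite{S}.

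What you have written is a (reasonable-looking) proof of Lemma \ref{CurveAtTime1}, the result that the endpoint limits $\gamma_c(\pm 1)$ exist and vary continuously in $c$. That lemma \emph{invokes} Theorem \ref{holomotion}: one constructs a holomorphic motion $h$ on a subset $E$ of the Loewner curve, and then Theorem \ref{holomotion} supplies the jointly continuous extension $H$ from which the endpoint limits are read off. You have reproduced that application, with a few cosmetic differences from the paper (omitting the endpoint from $E$, invoking Proposition \ref{KNKprop} explicitly to pin down the base-point limit, discussing branch choices on a simply connected $U$). But none of this touches the content of Theorem \ref{holomotion} itself. To actually prove Theorem \ref{holomotion}, you would need the quasiconformal machinery: the original $\lambda$-lemma argument of Mañé--Sad--Sullivan giving a quasiconformal extension to $\overline{E}$ via normal families and the distortion estimates from the Schwarz lemma on the hyperbolic metric, followed by Slodkowski's harder step extending the motion off $\overline{E}$ to all of $\mathbb{C}$ via polynomial hulls. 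None of that appears in your write-up, so the proposal does not address the statement.
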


\begin{proof}[Proof of Lemma \ref{CurveAtTime1}]

Let $\Omega = \{c \in \mathbb{C} \setminus \{\pm 4\} \, : \, \text{Re}(\alpha) \neq 0\}$, which is pictured in Figure \ref{ReAlphaIs0}, and let $D$ be a disc of radius $r$ centered at $c_0$ with $D \subset \Omega$.
For $c \in \Omega$,
the top curve of the left hull
$\gamma_c[0,1)$  is a simple curve by Proposition \ref{simplecurve1}.
Let  $s \in (0,1)$ and let $E =  \gamma_{c_0}[s,1)$.
For $w \in E$, let $\sigma(w) = \gamma_{c_0}^{-1}(w)$, i.e. $\sigma(w)$ is the unique time $t \in [s, 1)$ so that $\gamma_{c_0}(t) = w$, which is well-defined since $\gamma_{c_0}$ is a simple curve.
Define $h: \mathbb{D} \times E \to \mathbb{C}$ by
$$h(\zeta, w) = \gamma_{c_0+r\zeta}\left( \sigma(w) \right).$$
We will show that $h$ is a holomorphic motion of $E$.
Note that for fixed $\zeta \in \mathbb{D}$, the map $w \mapsto h(\zeta, w)$ is injective since $\gamma_c$ is a simple curve for each $c \in \Omega$, verifying property 2.
Property 3 follows from the definition of $\sigma$:
$$h(0,w) = \gamma_{c_0} ( \sigma(w))   = w.  $$

To show that $h$ is a holomorphic motion of $E$, we must show that  $ h(\zeta, w)$ is holomorphic in $\zeta$ for fixed $w \in E$.
This will follow from showing that $\gamma_c(t)$ is holomorphic in $c$ for fixed $t \in [s, 1)$.  
Recall from \eqref{TipEquation} that $ \gamma_c(t)$ satisfies
\begin{equation*}
    \alpha\log B + \beta\log A + \log\sqrt{1-t} = \alpha\log(\gamma_c(t)-A) + \beta\log(\gamma_c(t)-B),
\end{equation*}
Note that $A, B, \alpha, $ and $\beta$ are holomorphic in $c$, for $c$ in any disk that avoids $\pm4$. (In fact, $A' = \beta$ and $B'= \alpha$.)  This implies that the derivative of the lefthand side of the above equation ($\partial_c LHS$) is well-defined.  A short computation yields
\begin{align*}
\partial_c \gamma_c(t) = \frac{(\gamma_c(t) - A)( \gamma_c(t) - B)}{\gamma_c(t) - c} &\left[ \partial_c LHS 
          - \alpha' \log(\gamma_c(t) -A) - \beta'\log(\gamma_c(t) -B) \right] \\
          &+  \alpha\beta \frac{2 \gamma_c(t) -c}{\gamma_c(t) -c}. 
\end{align*}          
Since $t \in [s, 1)$ and $\gamma_c(\cdot)$ is only equal to $c$ at time $0$, we have that $\partial_c \gamma_c(t)$ is well defined for $c \in \Omega$.
Thus $h$ is a holomorphic motion of $E$.

Theorem \ref{holomotion} implies that $h$ has an extension to $H: \mathbb{D} \times \mathbb{C} \to \mathbb{C}$ so that $H$ is jointly continuous in $(w, \zeta)$.  
This implies that $\gamma_c(1) := \displaystyle \lim_{t \nearrow 1} \gamma_c(t)$ exists and is continuous in $c$ for $c \in D$.  
Since this is true for all discs $D \subset \Omega$, we have that $\gamma_c(1)$ exists and is continuous in $c$ for all $c \in \Omega$.

We can apply the same proof to the lower curve to show that 
$\gamma_c(-1)$ exists and is continuous in $c$ for all $c \in \Omega$.
Further, we can extend this result to $c$ with Re$(\alpha)=0$ and Arg$(c) \in (0,\pi/2)$ using
Proposition \ref{simplecurve1}(ii).

\end{proof}

We are now ready to prove Proposition \ref{AB}.

\begin{proof}[Proof of Proposition \ref{AB}.]
We begin by identifying $A$ and $B$ as the possible limits for $\gamma(t)$ and $\gamma(-t)$ as $t \to 1$.
Let $z_t = \gamma(t)$, and assume that Re$(\alpha) \neq 0$.
By Lemma \ref{CurveAtTime1}, we know that the limit of $z_t$ as $t \to 1$ exists.
Taking the real part of both sides of equation \eqref{TipEquation} yields
\begin{align*}
\text{Re}&(\alpha)\log(|B|) - \text{Im}(\alpha)\text{Arg}(B)+ \text{Re}(\beta)\log(|A|)-
 \text{Im}(\beta)\text{Arg}(A) + \log\sqrt{1-t} \\
 &=\text{Re}(\alpha)\log|z_t-A|-\text{Im}(\alpha)\text{Arg}(z_t-A)+\text{Re}(\beta)\log|z_t-B|-\text{Im}(\beta)\text{Arg}(z_t-B) \text{.} 
 \end{align*}
Since the left-hand side of the above equation diverges to $-\infty$ as $t \to 1$, at least one of the four terms on the right side must also diverge to $-\infty$. 
Note that if either of the first two terms approaches $-\infty$ as $t \to 1$, then $z_t \to A$.
If the either of the last two terms approaches $-\infty$, then $z_t \to B$.
The same proof applies when $z_t = \gamma(-t)$.

Next we determine when $\gamma(1)$ equals $A$ and when  equals $B$. 
By Lemma \ref{CurveAtTime1}, we know that $\gamma(1)$ is continuous in $c$ 
on the set $\{ c \in \mathbb{C} \setminus \{\pm 4\} \, : \, \text{ Arg}(c) \in [0,\pi/2] \text{ and } \text{Re}(\alpha) \neq 0 \}$.  
This set has two connected components (see Figure \ref{ReAlphaIs0}); the component corresponding to Re$(\alpha)>0$ contains the real interval (0,4), and the component corresponding to Re$(\alpha)<0$ contains $(4, \infty)$.
From the real-valued case in Proposition \ref{KNKprop}, we know that 
$\gamma_c(1)=A$ for $c \in (0,4)$
and $\gamma_c(1)=B$ for $c \in (4, \infty)$.
By Lemma \ref{parameterlemma}, $A$ and $B$ are in separate regions (which only intersect for $c=4$.)
Therefore, by the continuity in $c$, we have that 
$\gamma_c(1) = A$ when Re$(\alpha) >0$ 
and $\gamma_c(1) = B$ when Re$(\alpha) <0$. 
The same proof applies to show that $\gamma_c(-1) = B$ 
for all $c$ with Arg$(c) \in [0, \pi/2]$.

Our last step is to identify the spiraling behavior of $z_t = \gamma(t)$ or $\gamma(-t)$, when Arg$(c) \in (0,\pi/2)$.
We first consider the case when $z_t \to A$ (i.e. 
$z_t = \gamma(t)$ and Re$(\alpha)>0$).
Taking the imaginary parts of both sides of equation \eqref{TipEquation} gives 
\begin{align*}
\text{Re}&(\alpha)\text{Arg}(B)+\text{Im}(\alpha)\log|B| + \text{Re}(\beta)\text{Arg}(A)+\text{Im}(\beta)\log|A| \\
    &=\text{Re}(\alpha)\text{Arg}(z_t-A) +\text{Im}(\alpha)\log|z_t-A|+\text{Re}(\beta)\text{Arg}(z_t-B)+\text{Im}(\beta)\log|z_t-B|\text{.}
\end{align*}
The left-hand side is independent of $t$, and so the right-hand side must remain bounded as $t \to 1$.
However, Im$(\alpha)\log|z_t-A| \to - \infty$, since Im$(\alpha) >0$ by Lemma \ref{parameterlemma}.
Thus there must be at least one term in the right-hand side that approaches $+ \infty$ as $t\to 1$. 
The terms Re$(\beta)\text{Arg}(z_t-B)$ and Im$(\beta)\log|z_t-B|$ must remain bounded since  $z_t \to A \neq B$. 
Therefore $\text{Arg}(z_t-A) \to + \infty$ as $t \to 1$, which implies that $\gamma(t)$ spirals counterclockwise around $A$ as $t \to 1$.
For the case when $z_t \to B$, we will use that 
$\text{Im}(\beta)<0$ and $\text{Re}(\beta)>0$ by Lemma \ref{parameterlemma}.
Then arguing as above, $z_t \to B$ implies that $\text{Im}(\beta)\log|z_t-B| \to +\infty$.
This further implies that $\text{Arg}(z_t-B) \to -\infty$, which means that $z_t$ spirals clockwise around $B$ as $t \to 1$.

\end{proof}

\subsection{Time-1 hulls}
To finish our analysis when Re$(\alpha) \neq 0$, it remains to determine the time-1 left hulls $L_1$.
We begin by showing that $A,B \in L_1$ and, when Re$(\alpha) <0$, the interior of $\gamma$ is also in $L_1$.

\begin{lemma}\label{TargetPointsInHull}
Let $c \in \mathbb{C}$ and let
 $\l(t) = c\sqrt{1-t}$.
 Then $A,B \in L_{1}$. 
\end{lemma}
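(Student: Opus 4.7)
The plan is to observe that along the trajectory of $A$ the Loewner ODE has the simple explicit solution $g_t(A)=A\sqrt{1-t}$. Once this is established, $A\in L_1$ follows immediately because this solution meets the driving function $\lambda(t)=c\sqrt{1-t}$ exactly at $t=1$.

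First, I would recall from \eqref{AB4} that $A$ and $B$ are the roots of $G^2-cG+4$, so $A^2-cA+4=0$ and $AB=4$; in particular $A\neq 0$, and rearranging $A^2-cA+4=0$ gives $A-c=-4/A$. Setting $h(t)=A\sqrt{1-t}$, one checks directly that $h(0)=A$ and
\[ h'(t) \;=\; -\frac{A}{2\sqrt{1-t}}, \qquad \frac{2}{h(t)-\lambda(t)} \;=\; \frac{2}{(A-c)\sqrt{1-t}} \;=\; -\frac{A}{2\sqrt{1-t}}, \]
so $h$ solves the IVP \eqref{cle} with $z=A$. Since the vector field $(t,y)\mapsto 2/(y-\lambda(t))$ is locally Lipschitz in $y$ off the set $\{y=\lambda(t)\}$, local uniqueness of solutions forces $g_t(A)=A\sqrt{1-t}$ for all $t\in[0,1)$. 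The difference $g_t(A)-\lambda(t)=(A-c)\sqrt{1-t}$ is nonzero for $t\in[0,1)$ and vanishes at $t=1$, so the lifetime is $T_A=1$, and $A\in L_1$. The identical argument with $B$ in place of $A$, using $B^2-cB+4=0$ and the ansatz $k(t)=B\sqrt{1-t}$, gives $B\in L_1$.

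I do not expect a serious obstacle: the whole proof is a direct verification together with an ODE-uniqueness invocation. The two small bookkeeping points to flag are that $A-c=-4/A$ requires $A\neq 0$, which holds because $AB=4$, and that one must also have $A\neq c$ (so that $h(t)-\lambda(t)$ is not identically zero); but $A=c$ would force $\sqrt{c^2-16}=c$, i.e.\ $-16=0$, which is impossible, and similarly for $B$.
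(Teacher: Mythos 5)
Your proof is correct and follows essentially the same route as the paper: verify directly that $t\mapsto A\sqrt{1-t}$ and $t\mapsto B\sqrt{1-t}$ solve the Loewner ODE (the paper writes $A-c=-B$ and uses $AB=4$ where you write $A-c=-4/A$, an algebraically equivalent step), invoke uniqueness to conclude $g_t(A)=A\sqrt{1-t}$ and $g_t(B)=B\sqrt{1-t}$, and observe that these differ from $\lambda(t)$ for $t<1$ but coincide with it at $t=1$. The only cosmetic difference is that you explicitly flag the sanity checks $A\neq 0$ and $A\neq c$, which the paper leaves implicit.
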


The proof will further show that $T_A= T_B = 1$, i.e. $A,B$ are added to the hull at time 1.

\begin{proof}
Define  $w_A(t) = A\sqrt{1-t}$ and $w_B(t) = B\sqrt{1-t}$.
We will show that these
are solutions to the Loewner equation with driving function $\l$.
Since $AB=4$ by \eqref{AB4},
$$\partial_t w_A(t)= -\frac{A}{2\sqrt{1-t}}
= -\frac{2}{B\sqrt{1-t}} = \frac{2}{(A-c)\sqrt{1-t}}=\frac{2}{w_A(t)-\l(t)} .$$
Since \eqref{cle} has a unique solution starting from point $A$, 
we must have that $g_t(A) = w_A(t)$. 
In other words, the Loewner flow of $A$ with driving function $\lambda$ is given by by $w_A(t)$.
A similar computation shows $g_t(B) = w_B(t)$.
We notice that, for $t < 1$, $w_A(t), w_B(t) \neq \lambda(t)$, but at time $t = 1$, 
$$w_A(1) = w_B(1) = \lambda(1) = 0, $$
showing that $A, B \in L_{1}$.
\end{proof}

If Re$(\alpha) < 0$, then $\gamma(1) = \gamma(-1)$ by Proposition \ref{AB} and the reflection property, and consequently $\gamma[-1,1]$ is a simple closed curve.  Let Int$(\gamma)$ be the bounded component of $\mathbb{C}\setminus \gamma[-1,1]$.

\begin{lemma} \label{interior}
Let $c \in \mathbb{C}$ with Re$(\alpha) < 0$, and let $\l(t) = c\sqrt{1-t}.$
Then $\gamma[-1,1] \cup \text{Int}(\gamma)$ are contained in  $L_1$.
\end{lemma}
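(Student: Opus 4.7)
The plan is to recast the Loewner flow as an autonomous ODE in the rescaled variable and identify $\text{Int}(\gamma)$ as the basin of attraction of $A$. Following the change of variables used in the implicit solution derivation, set $G(s)=g_t(z)/\sqrt{1-t}$ with $s=-\log(1-t)$. Then $G$ satisfies the autonomous equation
\begin{equation*}
\frac{dG}{ds} = \frac{1}{2}\,\frac{(G-A)(G-B)}{G-c},
\end{equation*}
a rational vector field on $\mathbb{C}$ with fixed points at $A$ (linear eigenvalue $1/(2\alpha)$) and $B$ (eigenvalue $1/(2\beta)$), an attracting fixed point at $\infty$ (eigenvalue $-1/2$ in the chart $v=1/G$), and a pole at $G=c$ reached in finite $s$ exactly when $z$ is captured by $\lambda$. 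The assumption $\text{Re}(\alpha)<0$ gives $\text{Re}(1/(2\alpha))<0$ and $\text{Re}(\beta)>0$, so $A$ is attracting while $B$ is repelling.

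For $z\notin\gamma[-1,1]$, Proposition \ref{simplecurve1} shows $z$ is not captured before time $1$, so the orbit $G(s;z)$ never hits $c$ and is defined for all $s\geq 0$. Lemma \ref{ImplicitSolutionLemma} records the first integral
\begin{equation*}
\alpha\log(G-A) + \beta\log(G-B) = \alpha\log(z-A) + \beta\log(z-B) + \tfrac{s}{2}.
\end{equation*}
As $s\to\infty$ the real part of the right-hand side tends to $+\infty$, so the same must hold for the left-hand side. Since $\text{Re}(\alpha)<0$ and $\text{Re}(\beta)>0$, this can only occur if $G\to A$ (where $\text{Re}(\alpha)\log|G-A|\to+\infty$) or $G\to\infty$ (where both log terms grow); convergence to $B$ is excluded because $\text{Re}(\beta)\log|G-B|$ would tend to $-\infty$, and any bounded non-convergent orbit would keep the left-hand side bounded. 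Hence every such $z$ lies in either the basin $\mathcal{B}_A$ of $A$ or the basin $\mathcal{B}_\infty$ of $\infty$.

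Both basins are open in $\mathbb{C}$, and their union is $\mathbb{C}\setminus\gamma[-1,1]$ since points captured in finite time lie in $\gamma(-1,1)$ by Proposition \ref{simplecurve1} and the only orbit tending to the repeller $B$ is the constant orbit $G\equiv B$. A boundary point of $\mathcal{B}_A$ cannot lie in the open set $\mathcal{B}_\infty$, so $\partial\mathcal{B}_A\subset\gamma[-1,1]$, and analogously for $\mathcal{B}_\infty$; consequently each basin is a union of connected components of $\mathbb{C}\setminus\gamma[-1,1]$. By the Jordan curve theorem this complement has exactly two components, the bounded $\text{Int}(\gamma)$ and an unbounded component $\Omega$, so each basin equals one of them. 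Since $\infty\in\mathcal{B}_\infty\cap\Omega$, we have $\mathcal{B}_\infty=\Omega$. Lemma \ref{TargetPointsInHull} gives $T_A=1$ with constant orbit $G(s;A)\equiv A$, so $A\in\mathcal{B}_A$; moreover $A\notin\gamma[-1,1]$, because $\gamma(t_0)=A$ would force $T_A=|t_0|$, contradicting $T_A=1$ and Proposition \ref{AB}'s identification $\gamma(\pm 1)=B\neq A$. Hence $A\in\text{Int}(\gamma)$ and $\mathcal{B}_A=\text{Int}(\gamma)$. For any $z\in\text{Int}(\gamma)$ then, $G(s;z)\to A$ implies $g_t(z)=G\sqrt{1-t}\to 0=\lambda(1)$, so $T_z\leq 1$ and $z\in L_1$; combined with $\gamma[-1,1]\subset L_1$, this gives $\gamma[-1,1]\cup\text{Int}(\gamma)\subset L_1$.

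The main obstacle is the implicit-formula step that rules out both convergence to $B$ and bounded non-convergent orbits, by observing that the real part of $\alpha\log(G-A)+\beta\log(G-B)$ must grow linearly in $s$ along every trajectory; once this is in hand, the rest is a purely topological consequence of the Jordan curve theorem applied to $\gamma[-1,1]$.
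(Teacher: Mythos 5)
You take a genuinely different route from the paper. The paper's proof is short and elementary: it exploits the self-similarity of $\l(t)=c\sqrt{1-t}$ under Brownian scaling to show $\text{diam}\,L_{1-t_0,\l(t_0+\cdot)} = \sqrt{1-t_0}\,\text{diam}\,L_{1,\l}$, then uses the concatenation property to trap $g_{t_0}(w)$ inside a loop of diameter $O(\sqrt{1-t_0})$ about $\l(t_0)$, giving $g_{t_0}(w)\to 0=\l(1)$ directly. Your proof instead treats the rescaled variable $G=g_t/\sqrt{1-t}$ as a point moving under an autonomous holomorphic vector field and identifies $\text{Int}(\gamma)$ with the basin of attraction of the sink $A$. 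This is a nice dynamical picture, and the topological bookkeeping (openness of basins, Jordan curve theorem, $A\in\text{Int}(\gamma)$) is handled correctly.

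However, the key analytic step has a genuine gap. You claim that ``any bounded non-convergent orbit would keep the left-hand side bounded.'' That is false as stated: the real part of $\alpha\log(G-A)+\beta\log(G-B)$ contains the terms $-\text{Im}(\alpha)\text{Arg}(G-A)-\text{Im}(\beta)\text{Arg}(G-B)$, which are unbounded along an orbit that spirals around $A$ or $B$ while staying a bounded distance from it. (Indeed even when $G\to A$ the growth of the real part is shared between the $\log|G-A|$ and the $\text{Arg}(G-A)$ terms: a local linearization shows $\text{Re}(\alpha)\log|G-A|\sim \tfrac{s}{2}\,\text{Re}(\alpha)^2/|\alpha|^2 < s/2$, with the $\text{Arg}$ term supplying the remaining $\tfrac{s}{2}\,\text{Im}(\alpha)^2/|\alpha|^2$.) So the first integral alone does not exclude bounded spiraling orbits; to do that you need to invoke Poincar\'e--Bendixson together with the fact that a holomorphic vector field has no periodic orbits except around centers (and the fixed points $A,B$ here are not centers since $\text{Re}(1/\alpha),\text{Re}(1/\beta)\neq 0$). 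You should also argue that a non-captured orbit cannot accumulate at the pole $c$, so that Poincar\'e--Bendixson applies. None of this is in the proposal, and it is a real missing ingredient rather than a routine detail. Once supplied, the rest goes through, but the result is considerably heavier machinery than the paper's scaling estimate.
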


\begin{proof}
Since we already know that $\gamma[-1,1]$ is contained in $L_1$, we need only prove that Int$(\gamma) \subset L_{1}$. 
Fix $t_0 \in (0,1)$.  Then for $t \in [0, 1-t_0]$,
$$\l(t_0+t) = c\sqrt{(1-t_0) -t} = \sqrt{1-t_0} \cdot c\sqrt{1-t/(1-t_0)} = \sqrt{1-t_0} \cdot \l\left( t/(1-t_0) \right).$$
The scaling property implies that 
$$L_{t, \l(t_0+\cdot)} = \sqrt{1-t_0}\, L_{t/(1-t_0), \l}.$$
Plugging in $t=1-t_0$ yields
$$L_{1-t_0, \l(t_0+\cdot)} = \sqrt{1-t_0}\, L_{1, \l},$$
and so
$$\text{diam}\,L_{1-t_0,\l(t_0+\cdot)} = \sqrt{1-t_0}\text{ diam}\,L_{1,\l}. $$

Now, fix any $w \in \text{Int}(\gamma)$. Then $w \notin L_{t,\l}$ for any $t < 1$, so $g_{t_0}(w)$ is well-defined. 
Under the conformal map $g_{t_0}$, 
the curve $\gamma[-1,-t_0]\cup \gamma[t_0,1]$ is mapped to a loop beginning and ending at $\l(t_0)$, 
and $g_{t_0}(w)$ must lie in the interior of this loop.
However, by the concatenation property this loop will be a subset of
$L_{1-t_0, \l(t_0+\cdot)}.$
Therefore there exists $k>0$ so that 
$$|g_{t_0}(w)| \leq |\l(t_0)| + \text{diam}\, L_{1-t_0, \l(t_0+\cdot)} \leq k \sqrt{1-t_0}.$$ 
Thus
$$\displaystyle \lim_{t_0 \nearrow 1}g_{t_0}(w) = 0 = \l(1), $$
and so $w \in L_{1,\l}$.
\end{proof}

It remains to address the question of whether there are any additional points that are added to hull at time 1.  
Since we do not have a well-developed theory yet in the complex setting to rule this out, we will do this in a more hands-on way with the following lemma, which we will prove in Section 4.
Let $\hat{\mathbb{C}} = \mathbb{C} \cup \{\infty \}$ be the Riemann sphere,
and define a {\it $t$-accessible} point to be a point $z_0$ in 
$\displaystyle L_t \setminus \bigcup_{s \in [0,t)} L_s$ so that there exists a curve $\eta$ in $\mathbb{C} \setminus L_t$ ending at $z_0$.

\begin{lemma}\label{accessiblepoints}
Let $c \in \mathbb{C}$, and let $\l(t) = c\sqrt{1-t}.$
There are at most two 1-accessible points and $\hat{\mathbb{C}} \setminus L_1$ is simply connected.
\end{lemma}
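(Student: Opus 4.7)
My plan is to pass to the limit $t \to 1^-$ in the implicit equation from Lemma~\ref{ImplicitSolutionLemma} to obtain an explicit formula for $g_1$, and then use this formula to control both the accessible points and the topology of $\hat{\mathbb{C}} \setminus L_1$. For $z \in \mathbb{C} \setminus L_1$, one has $T_z > 1$, so $g_t(z) \to g_1(z) \neq 0 = \l(1)$, and the two logarithms on the left of the implicit equation combine (using $\alpha + \beta = 1$) to yield
\[
g_1(z) = F(z) := (z-A)^{\alpha}(z-B)^{\beta},
\]
with branches chosen so that $F(z)/z \to 1$ at $\infty$. The total monodromy around $\{A,B\}$ is $e^{2\pi i(\alpha+\beta)} = 1$, so $F$ is single-valued on any open subset of $\hat{\mathbb{C}}$ obtained by removing an arc from $A$ to $B$ contained in $L_1$. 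Since $F$ is an exponential of an analytic function it is nonvanishing, and a short computation gives $F'(z) = F(z)(z-c)/[(z-A)(z-B)]$, whose only zero is at $c = \l(0) \in L_0 \subseteq L_1$; hence $F$ is locally univalent on $\mathbb{C} \setminus L_1$.

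For the bound on 1-accessible points, suppose $z_0$ is 1-accessible via a curve $\eta:[0,1) \to \mathbb{C} \setminus L_1$ with $\eta(s) \to z_0$. The idea is to argue that, along $\eta$, the value of $F = g_1$ must tend to $\l(1) = 0$. For each $t < 1$, the map $g_t$ is continuous at $z_0 \in \mathbb{C} \setminus L_t$, so $g_t(\eta(s)) \to g_t(z_0)$ as $s \to 1$; since $T_{z_0} = 1$ one has $g_t(z_0) \to 0$ as $t \to 1^-$; and for each fixed $s$, $g_t(\eta(s)) \to F(\eta(s))$ by the implicit equation. A careful diagonal selection, using locally uniform convergence $g_t \to F$ on compact subsets of $\mathbb{C} \setminus L_1$ together with the continuity of each $g_t$ on the closure of $\eta$, allows one to interchange the order of limits and conclude $F(\eta(s)) \to 0$ as $s \to 1$. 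Since $F$ is nonvanishing and continuous on $\mathbb{C}\setminus\{A,B\}$, this forces $\eta(s)$ to approach $A$ or $B$, so $z_0 \in \{A, B\}$, giving at most two 1-accessible points.

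For the simple connectivity of $\hat{\mathbb{C}}\setminus L_1$, I would argue that $F$ extends $g_1$ to a conformal map on a simply connected domain $\Omega$ obtained from $\hat{\mathbb{C}}$ by removing an arc inside $L_1$ that joins $A$ to $B$. Since $F'$ vanishes only at the critical point $c$, which lies on the removed arc, and since the boundary behavior of $F$ near that arc can be read off the explicit formula, an argument-principle / monodromy upgrade shows that $F$ is globally univalent on $\Omega$. If $\mathbb{C} \setminus L_1$ had a bounded connected component $U$, then $F(U)$ would be an open subset of $\mathbb{C} \setminus R_1$; but so is the image of the unbounded component, and the two images would have to overlap (both preimages under the Loewner bijection $g_1$ of the same points of $\mathbb{C}\setminus R_1$), contradicting injectivity of $F$ on $\Omega$. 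Hence $\hat{\mathbb{C}} \setminus L_1$ has a single component, which inherits simple connectivity from $\Omega$.

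The main obstacle will be rigorously justifying the limit interchange in the second step and, more substantially, verifying in the third step that $F$ is globally univalent on the appropriate domain $\Omega$ in each of the three regimes Re$(\alpha) > 0, <0, =0$. The phase-transition case Re$(\alpha) = 0$ is particularly delicate: $L_1$ there is a non-simple curve with interior, the pinch point $c$ lies on both the upper and lower parts of the hull, and the choice of arc joining $A$ and $B$ inside $L_1$ must be coordinated with the decomposition from Proposition~\ref{simplecurve1}(ii) in order to rule out spurious complementary components generated by the pinching.
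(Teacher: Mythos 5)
The paper's proof of this lemma is very short and proceeds by a completely different route than yours: it uses the duality property to identify $R_{1,c\sqrt{1-t}}$ with a rotation of $L_{1,-ic\sqrt{t}}$, which the immediately preceding discussion in Section \ref{tauplust} has already shown to be one or two line segments emanating from the origin. The complement in $\hat{\mathbb{C}}$ of such a star is simply connected and has at most two prime ends at $0$; transporting this through the conformal bijection $g_1:\hat{\mathbb{C}}\setminus L_1 \to \hat{\mathbb{C}}\setminus R_1$ then settles both assertions at once. This is why the lemma is stated in Section \ref{OneMinusT} but proved only in Section \ref{tauplust}, after the $c\sqrt{t}$ hulls are understood. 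Your proposal, by contrast, tries to avoid duality entirely and work directly with the explicit $g_1(z) = (z-A)^\alpha(z-B)^\beta$.

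Two of your steps have genuine gaps. First, the limit interchange to get $F(\eta(s)) \to 0$: the "locally uniform convergence $g_t \to F$ on compact subsets of $\mathbb{C}\setminus L_1$" does not help, because $z_0$ lies on $\partial(\mathbb{C}\setminus L_1)$ and the curve $\eta$ eventually leaves every compact subset of the domain; you would need a quantitative boundary estimate such as the Loewner displacement bound applied to the concatenated flow, which you have not supplied. The paper's phrasing in terms of prime ends of the explicit star domain $\mathbb{C}\setminus R_1$ avoids this. Second, and more seriously, the simple connectivity argument does not close: the claimed "argument-principle / monodromy upgrade" showing $F$ univalent on a larger $\Omega$ is not justified, and even if granted, the purported contradiction is vacuous --- since $g_1$ is already a bijection from $\mathbb{C}\setminus L_1$ onto $\mathbb{C}\setminus R_1$, the images of distinct components of $\mathbb{C}\setminus L_1$ are automatically disjoint, so no "overlap" can occur. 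What is actually needed, and what the paper supplies via duality, is the fact that $\mathbb{C}\setminus R_1$ is \emph{connected} (indeed the complement of a star); without identifying $R_1$ you have no way to rule out that $\mathbb{C}\setminus L_1$ and its conformal image $\mathbb{C}\setminus R_1$ both have extra bounded components. Your observation that $g_1$ has the explicit closed form $(z-A)^\alpha(z-B)^\beta$ is correct and is the time-reversed companion of equation \eqref{inverse}, but on its own it does not replace the structural information about $R_1$ that the paper's argument rests on.
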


We are now able to prove the first two parts of Theorem \ref{familyTHM1}.

\begin{proof}[Proof of Theorem \ref{familyTHM1} (i)-(ii).]
Assume that Re$(\alpha) \neq 0$.
Then Proposition \ref{simplecurve1} gives that $L_t$ is a simple curve $\gamma[-t,t]$ for $t<1$.
From Proposition \ref{AB}, Lemma \ref{TargetPointsInHull}, and the reflection property, we can extend $\gamma$ continuously to $[-1,1]$, and the endpoints $\gamma(1), \gamma(-1)$ are in $L_1$.  

When Re$(\alpha) >0$, Proposition \ref{AB} shows that the endpoints of $\gamma[-1,1]$ are distinct and gives
the spiraling behavior of this curve for $c \notin i\mathbb{R}$.
Note that if $L_1 = \gamma[-1,1]$, then the distinct endpoints $\gamma(1),\gamma(-1)$ are 1-accessible points.
It is not possible that any additional points could be added to $\gamma[-1,1]$ to form $L_1$ so that there are only two 1-accessible points, as required by Lemma \ref{accessiblepoints}.
Thus $L_1 = \gamma[-1,1]$.

When Re$(\alpha) <0$, Proposition \ref{AB} implies that $\gamma(-1) = \gamma(1)$,  forming a closed loop, and Lemma \ref{interior} shows that the interior of $\gamma$ is also in $L_1$.
If $L_1 = \gamma[-1,1] \cup \text{Int}(\gamma)$, then the endpoint $\gamma(1)=\gamma(-1)$ is a 1-accessible point.
The only way to add additional points to form $L_1$ so that is has at most two 1-accessible points, as required by Lemma \ref{accessiblepoints}, is to add one point in $\mathbb{C} \setminus \left( \gamma[-1,1] \cup \text{Int}(\gamma) \right)$.  However, this would violate the requirement that $\hat{\mathbb{C}} \setminus L_1$ is simply connected,
proving that $L_1 = \gamma[-1,1] \cup \text{Int}(\gamma)$.
\end{proof}

\section{Left hulls driven by $c\sqrt{\tau + t}$} \label{tauplust}

Properties of the hulls driven by functions of the form $c\sqrt{\tau + t}$ are intertwined with properties of the hulls driven by functions of the form $\hat{c}\sqrt{1-t}$. In fact, the duality and scaling properties relate these hulls.
By scaling, we can assume that $\tau <1$, with $s = 1-\tau$.
Then by duality, the right hull $R_s$ driven by $c\sqrt{\tau +t}$ is a rotation of the left hull $L_s$ driven by $-ic\sqrt{1-t}$, and similarly, the right hull $R_s$ driven by $\hat{c}\sqrt{1-t}$ is a rotation of the left hull $L_s$ driven by $ -i\hat{c}\sqrt{\tau+t}$.

\subsection{Implicit solution }

We begin by solving the Loewner equation for an implicit solution, as in the previous case.
Alternately, one could establish this result using Lemma \ref{ImplicitSolutionLemma} along with the duality and scaling properties.

\begin{lemma} \label{ImplicitSolutionLemma2}
For $c \in \mathbb{C}$ with $c \neq \pm 4i$ and $\tau \geq 0$, let  $\l(t) = c\sqrt{\tau+ t}$ be the driving function.  
Set $D = \frac{1}{2}\left[c+\sqrt{c^2+16}\right], \, E = \frac{1}{2}\left[c-\sqrt{c^2+16}\right], \,
\delta = \frac{1}{2}\left[1-\frac{c}{\sqrt{c^2+16}}\right]$,
and $\epsilon = \frac{1}{2}\left[1+\frac{c}{\sqrt{c^2+16}}\right]$.
Then the solution $g_t(z)$ to \eqref{cle} satisfies
\begin{equation} \label{implicitsol2}
\delta \log\left(g_t(z)-D\sqrt{\tau + t}\,\right)+\epsilon \log\left(g_t(z)-E\sqrt{\tau +t}\,\right) 
= \delta\log(z-D\sqrt{\tau}) + \epsilon\log(z-E\sqrt{\tau}).
\end{equation}
Further $\displaystyle z_t \in L_t \setminus \bigcup_{s \in [0,t)} L_s$
satisfies
\begin{equation}\label{tip2}
 \delta \log\left(E\right)+\epsilon \log\left(D\right) +\log\sqrt{\tau+t}
= \delta\log(z_t-D\sqrt{\tau}) + \epsilon\log(z_t-E\sqrt{\tau}).
\end{equation}
\end{lemma}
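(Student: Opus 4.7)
The plan is to mimic the derivation of Lemma \ref{ImplicitSolutionLemma} almost verbatim, with the sign under the square root and the constant term in the resulting quadratic flipped. First I would introduce $G(t,z) = g_t(z)/\sqrt{\tau+t}$ to absorb the moving scale of $\lambda(t)$. A direct computation using \eqref{cle} gives
\begin{equation*}
\partial_t G = \frac{1}{\tau+t}\left(\frac{2}{G-c} - \frac{G}{2}\right) = -\frac{1}{2(\tau+t)}\cdot\frac{G^2 - cG - 4}{G-c},
\end{equation*}
which separates as $\frac{G-c}{G^2-cG-4}\,dG = -\frac{dt}{2(\tau+t)}$. The derivative of $\sqrt{\tau+t}$ carries the opposite sign from that of $\sqrt{1-t}$, which is what flips the constant term in the quadratic and ultimately turns $c^2-16$ into $c^2+16$.

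Factoring $G^2 - cG - 4 = (G-D)(G-E)$ via the quadratic formula gives the stated $D$ and $E$, along with the key identities $D+E=c$, $DE=-4$, and hence $c-D=E$ and $c-E=D$. Choosing $\delta, \epsilon$ so that $\frac{G-c}{(G-D)(G-E)} = \frac{\delta}{G-D} + \frac{\epsilon}{G-E}$, the constraints $\delta+\epsilon=1$ and $\delta E + \epsilon D = c$ are solved exactly as in the proof of Lemma \ref{ImplicitSolutionLemma}, producing the formulas for $\delta$ and $\epsilon$ claimed in the statement.

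Integrating both sides, using $\delta+\epsilon=1$ to absorb the factor $\log\sqrt{\tau+t}$ into the two logarithms on the left, and evaluating the constant of integration at $t=0$ with $g_0(z)=z$ produces \eqref{implicitsol2}. For \eqref{tip2}, I would substitute $g_t(z_t) = \lambda(t) = c\sqrt{\tau+t}$ into \eqref{implicitsol2}; the relations $c-D=E$ and $c-E=D$ collapse the left-hand side to $\delta\log E + \epsilon\log D + \log\sqrt{\tau+t}$, which is the desired equation.

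The whole argument is a bookkeeping exercise; the main subtlety, as in Lemma \ref{ImplicitSolutionLemma}, is that the identities involve complex logarithms and so must be read modulo a consistent branch choice fixed at the original antidifferentiation. Alternatively, as noted just before the statement, one could bypass the ODE entirely and deduce \eqref{implicitsol2} and \eqref{tip2} from Lemma \ref{ImplicitSolutionLemma} via the duality and scaling properties, tracking how the substitution $s \mapsto -i\lambda(t-s)$ transforms $A, B, \alpha, \beta$ into $D, E, \delta, \epsilon$.
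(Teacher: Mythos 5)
Your proposal matches the paper's proof essentially verbatim: the same substitution $W = g_t(z)/\sqrt{\tau+t}$ (you call it $G$), the same separable ODE $\partial_t W = -\frac{1}{2(\tau+t)}\cdot\frac{W^2-cW-4}{W-c}$, the same partial-fraction decomposition with $\delta+\epsilon=1$, the same integration and evaluation at $t=0$, and the same substitution $g_t(z_t)=c\sqrt{\tau+t}$ using $c-D=E$, $c-E=D$ to obtain \eqref{tip2}. The alternative route you mention via duality and scaling is also the one the paper flags as an alternative just before stating the lemma.
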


\begin{proof}

Fix $c \in \mathbb{C}$ with $c \neq \pm 4i$ and $\tau \geq 0$, and set
$$\l(t) = c \sqrt{\tau + t}.$$
Set 
$$W = W(t,z) = \frac{g_t(z)}{\sqrt{\tau + t}}\text{.} $$
Then
\begin{align*}
\partial_t W = \partial_t\left( \frac{g_t(z)}{\sqrt{\tau+t}}\right) 
&= \frac{1}{\sqrt{\tau+t}} \, \frac{2}{g_t(z)-c\sqrt{\tau +t}} -\frac{1}{2} \frac{g_t(z)}{(\tau+t)^{3/2}} \\
&= \frac{1}{\tau+t}\left(\frac{2}{W-c}-\frac{W}{2} \right) \\
&= - \frac{1}{2} \cdot \frac{1}{\tau + t} \left(\frac{W^2 - cW - 4}{W-c}\right), 
\end{align*}
and 
$$\int \frac{W-c}{W^2-cW-4} dW = -\frac{1}{2}\int \frac{1}{\tau + t} dt \text{.}$$
The roots of the denominator $W^2-cW-4$ are 
$$D = \frac{1}{2}\left[c+\sqrt{c^2+16}\right] \;\;\; \text{ and } \;\;\; E = \frac{1}{2}\left[c-\sqrt{c^2+16}\right].$$
The assumption that $c \neq \pm 4i$ guarantees that the roots are distinct.
Then the partial fraction decomposition yields 
$$\int \left( \frac{\delta}{W-D}+\frac{\epsilon}{W-E} \right)dW 
= -\frac{1}{2} \int \frac{dt}{\tau + t} $$
for 
$$
\delta = \frac{1}{2}\left[1-\frac{c}{\sqrt{c^2+16}}\right] \;\;\;
\text{ and } \;\;\; \epsilon = \frac{1}{2}\left[1+\frac{c}{\sqrt{c^2+16}}\right] \text{.}$$
Note that $D+E=c$ and $\delta + \epsilon = 1$.
Therefore
$$\delta\log(W-D) + \epsilon \log(W-E) = -\frac{1}{2}\log(\tau + t) + K\text{,} $$
where $K$ denotes the constant of integration. 
Since $W = g_t(z)/ \sqrt{\tau + t}$ and $\d + \epsilon = 1$, 
we have
\begin{align*}
K &= \d\log\left(\frac{g_t(z)}{\sqrt{\tau + t}}-D\right) 
+ \epsilon \log \left(\frac{g_t(z)}{
\sqrt{\tau + t}}-E\right) + \log\left(\sqrt{\tau + t}\,\right)  \\
&= \delta \log\left(g_t(z)-D\sqrt{\tau + t}\,\right)+\epsilon \log\left(g_t(z)-E\sqrt{\tau +t}\,\right). 
\end{align*}
Plugging in the initial condition $g_0(z) = z$ gives
$$K = \delta\log(z-D\sqrt{\tau}) + \epsilon\log(z-E\sqrt{\tau}), $$
which establishes \eqref{implicitsol2}.
Equation \eqref{tip2} follows from \eqref{implicitsol2}, since $g_t(z_t) = c \sqrt{\tau+t}$ for $\displaystyle z_t \in L_t \setminus \bigcup_{s \in [0,t)} L_s$.

\end{proof}

The parameters $\delta, \epsilon, D$ and $E$ are related to the parameters $\alpha, \beta, A$ and $B$ as follows:
\begin{equation} \label{parameters}
\delta(c) = \alpha(-ic), \;\;\; \epsilon(c) = \beta(-ic), \;\;\;
D(c) = iA(-ic), \;\;\; \text{ and } \;\;\; E(c) = iB(-ic).
\end{equation}
This fits with the relationship from the  duality property that
right hull $R_s$ driven by $ c\sqrt{(1-s)+t}$  is a rotation by $i$ of the left hull $L_s$ driven by $-ic\sqrt{1-t}$. 
Since the behavior of the hulls generated by $\hat{c}\sqrt{1-t}$ 
depends on whether  Re$(\alpha)$ is positive, negative, or zero,
the behavior of hulls generated by $c\sqrt{\tau+t}$ should
depend on whether Re$(\delta)$ is positive, negative, or zero, 
as we will see in the next sections.

\subsection{Hulls for $\tau >0$}

The following result establishes Theorem \ref{familyTHM2}(iii) about the left hull generated by $c\sqrt{\tau -t}$ with $\tau >0$ and Re$(\delta) \neq 0$. 
We omit the proof, since it follows the same argument as for Proposition \ref{simplecurve1}.

\begin{prop}  \label{simplecurve2}
Let $c \in \mathbb{C}$, let $\tau >0$, let $\delta = \frac{1}{2}\left[ 1 - \frac{c}{\sqrt{c^2+16}}\right]$, 
let $\lambda(t) = c\sqrt{\tau+t}$, and
let $ s > 0$. 
\begin{enumerate}
\item[(i)]When Re$(\delta) \neq 0$, 
the left hull $L_s$ generated by $\l |_{[0,s]}$ is a simple curve.
In particular, $L_s = \gamma[-s,s]$ for a simple curve $\gamma: [-s, s] \to \mathbb{C}$ with
$$\gamma(t) = \lim_{y \to 0^+} g_t^{-1}(\l(t)+iy) \;\; \text{ and } \;\; \gamma(-t) = \lim_{y \to 0^-} g_t^{-1}(\l(t)+iy).$$

\item[(ii)] When Re$(\delta) = 0$ and Arg$(c) \in (0, \frac{\pi}{2})$, 
 the left hull $L_s$ generated by $\l |_{[0,s]}$ can be decomposed into two sets, an upper left hull $L_s^+$ and a lower left hull $L_s^-$
with $L_s^+ \cup L_s^- = L_s$ and $L_s^+ \cap L_s^- = \{c\}$.  
The upper left hull is a simple curve with $L_s^+ = \gamma[0,s]$ for $\gamma$ defined in (i).
\end{enumerate}
\end{prop}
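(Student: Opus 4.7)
The plan is to follow the proof of Proposition \ref{simplecurve1} as closely as possible. By the reflection property and Proposition \ref{KNKprop}, I may assume $\mathrm{Arg}(c) \in (0, \pi/2]$. Since $\tau > 0$, the driving function $\lambda(t) = c\sqrt{\tau + t}$ is smooth on $[0,s]$, so I subdivide $[0,s]$ into finitely many subintervals $[t_{k-1}, t_k]$ on which the $\mathrm{Lip}(1/2)$ norm of $\lambda$ is at most $1/3$; Tran's Theorem \ref{HuyTheorem} then makes each $L_{[t_{k-1},t_k]}$ and $R_{[t_{k-1},t_k]}$ a simple curve. I then run the same downward induction on $k$: passing from $L_{[t_k,s]}$ simple to $L_{[t_{k-1},s]}$ simple reduces, via the concatenation property, to showing that $L_{[t_k,s]} \setminus \{\lambda(t_k)\}$ does not meet $R_{[t_{k-1},t_k]}$.

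The hard part is this non-intersection claim, which I handle by the same loop-contradiction strategy. Suppose otherwise; then there are times $\tau' < t_k < \sigma$ such that $L_{[\tau',\sigma]}$ contains a simple closed curve based at $\lambda(\tau')$. Unlike the $c\sqrt{1-t}$ case, no Brownian scaling is needed here: the time-shifted driving $\lambda(\tau' + \cdot) = c\sqrt{\tilde\tau + \cdot}$, with $\tilde\tau = \tau + \tau'$, already lies in the family treated by Lemma \ref{ImplicitSolutionLemma2}. Parametrizing the loop by $z_t$ satisfying \eqref{tip2} with parameter $\tilde\tau$, taking the closing time $r > 0$ with $z_r = c\sqrt{\tilde\tau}$, and using $c - D = E$ and $c - E = D$, the tip equation collapses to
\[
\log\sqrt{(\tilde\tau + r)/\tilde\tau} \;=\; 2\pi i(\delta p + \epsilon q),
\]
where $p, q \in \{-1, 0, 1\}$ are the winding numbers of the loop around $D\sqrt{\tilde\tau}$ and $E\sqrt{\tilde\tau}$; by the Jordan curve theorem they must agree whenever both are nonzero. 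The left-hand side is real and strictly positive, so matching imaginary parts forces $p\,\mathrm{Re}(\delta) + q\,\mathrm{Re}(\epsilon) = 0$. The case $p = q = 0$ yields $r = 0$; the case $p = q \neq 0$ forces $\mathrm{Re}(\delta) + \mathrm{Re}(\epsilon) = 0$, contradicting $\delta + \epsilon = 1$; and the two mixed cases are excluded by the hypothesis $\mathrm{Re}(\delta) \neq 0$ together with $\mathrm{Re}(\epsilon) \neq 0$. This last fact is the analog of Lemma \ref{parameterlemma} for the new parameters, obtained by applying a conjugation argument to the identities $\delta(c) = \alpha(-ic)$ and $\epsilon(c) = \beta(-ic)$ from \eqref{parameters}; the result is $\mathrm{Re}(\epsilon) \geq 1/2$ throughout $\mathrm{Arg}(c) \in [0,\pi/2]$.

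For part (ii), the roles of upper and lower swap relative to Proposition \ref{simplecurve1}(ii), because $\mathrm{Im}\,\lambda(t) = \mathrm{Im}(c)\sqrt{\tau + t}$ is now increasing rather than decreasing. The required analog of Lemma \ref{lowerlemma} states that if $\mathrm{Im}\,g_{\tau'}(z) \leq \mathrm{Im}\,\lambda(\tau')$ at some time $\tau'$, then this inequality persists for all later times, since the ODE for $\phi(t) := \mathrm{Im}\,g_t(z) - \mathrm{Im}\,\lambda(t)$ now satisfies $\phi'(t_0) = -\mathrm{Im}(c)/(2\sqrt{\tau + t_0}) < 0$ at any zero $t_0$. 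Combined with Lemma \ref{HullLocation}, which places $R_{[0,\tau']}$ in the closed halfplane $\{\mathrm{Im}\,z \leq \mathrm{Im}\,\lambda(\tau')\}$, this forces the upper hull $L^+_{[\tau',s]} \setminus \{\lambda(\tau')\}$ to lie strictly above the line $\{\mathrm{Im}\,z = \mathrm{Im}\,\lambda(\tau')\}$ and therefore to avoid $R_{[0,\tau']}$. The concatenation argument from (i) then applies to the upper hull to give (ii).
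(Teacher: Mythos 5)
Your proposal is correct and is exactly the reconstruction the paper has in mind: the paper explicitly omits this proof, stating only that "it follows the same argument as for Proposition \ref{simplecurve1}," and you have carried out that argument faithfully, making the right adaptations at each point (no Brownian rescaling since the time-shift $\lambda(\tau'+\cdot)$ stays in the $c\sqrt{\tilde\tau+t}$ family; winding tracked around $D\sqrt{\tilde\tau}$ and $E\sqrt{\tilde\tau}$; $\mathrm{Re}(\epsilon)\geq 1/2$ from the relations \eqref{parameters} and Lemma \ref{parameterlemma}; and the sign flip in the analog of Lemma \ref{lowerlemma}, which correctly swaps the roles of upper and lower in part (ii) because $\mathrm{Im}\,\lambda$ is now increasing).
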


\subsection{Hulls for $\tau=0$}
In this section we determine the left hull driven by $c\sqrt{t}$, 
establishing Theorem \ref{familyTHM2}(i)-(ii).

When $\tau = 0$, we can simplify equations \eqref{implicitsol2} and \eqref{tip2} in Lemma \ref{ImplicitSolutionLemma2}.
Setting $\tau =0$ in \eqref{implicitsol2} gives
$$\delta \log\left(g_t(z)-D\sqrt{ t}\,\right)+\epsilon \log\left(g_t(z)-E\sqrt{t}\,\right) 
= \log(z),$$
and so
\begin{equation}\label{inverse}
z =  \left( g_t(z)-D\sqrt{t} \right)^{\d}\left(g_t(z)-E\sqrt{t}\right)^{\epsilon},
\end{equation}
yielding an explicit formula for the inverse.
Using this equation with  $g_t(z_t) = c\sqrt{t}$ 
(and few step of algebra) gives 
\begin{equation}\label{sqrtttips}
z_t = E^\delta D^\epsilon \sqrt{t} \;\;\; \text{ for }  z_t \in L_t \setminus \bigcup_{s \in [0,t)} L_s.
\end{equation}
The form of this equation may obscure the fact that there is an implicit branch choice.  As a result, there may be multiple points $z_t$ that satisfy this.
However, we immediately see that the hull $L_t$ is a union of  line segments that connect the origin to a tip point satisfying \eqref{sqrtttips}.

It remains to resolve the question of how many line segments constitute $L_t$.
By the scaling property, we can assume that $t=1$, and by the reflection property, we can assume that Arg$(c) \in [0,\pi/2]$.
Then from \eqref{inverse}, $g_1^{-1} : \mathbb{C} \setminus R_1 \to \mathbb{C} \setminus L_1$ is given by
$$g_1^{-1}(z) = \left( z - D \right)^{\d}\left( z - E \right)^{\epsilon}. $$
Recall that we can extend $g_1$ to the point at infinity so that $g_1^{-1}$ is conformal on $\hat{\mathbb{C}} \setminus R_1$
(where $\hat{\mathbb{C}} = \mathbb{C} \cup \{ \infty \}$.)
However, looking at the formula for $g_1^{-1}$ and using what we know about logarithms, it is possible that $g_1^{-1}$ is defined on an even larger domain.
In particular, there exists a well-defined branch for this function defined on a simply connected domain $\Omega$ containing  $\hat{\mathbb{C}} \setminus R_1$,
and the complement of $\Omega$ (i.e. the connected ``branch-cut" set containing $D$ and $E$) must be a subset of $R_1$.

We wish to determine how many different limits are possible for $g_1^{-1}(z)$ as  $z$ approaches $c$ along a curve in $\Omega$.  
Let $\eta_1$ be a curve which approaches $c$ in $\Omega$ from above.
(We know $\eta_1$ must exist, since the halfplane above the horizontal line $y=\text{Im}(c)$ is contained in $\mathbb{C} \setminus R_1$ by Lemma \ref{HullLocation}.)
Let $\eta_2$ be another curve that approaches $c$ within $\Omega$.
Connect $\eta_1$ and $\eta_2$ to form a simple closed loop in $\Omega \cup \{c\}$ beginning and ending at $c$.  
The interior of this loop may contain neither $D$ and $E$, both $D$ and $E$, or only one of the points. 
In the first case, there will be no change in Arg$(z-D)$ and Arg$(z-E)$ from the beginning to the end of the loop, and hence the limit of $g_1^{-1}(z)$ along $\eta_1$ and $\eta_2$ will be the same.  
The second case is equivalent to the first by homotopy, since $g_1^{-1}$ is conformal at infinity (or one can compute directly that the limit will be the same using the facts that $\delta + \epsilon = 1$ and the change of Arg$(z-D)$ and Arg$(z-E)$ along the loop will be the same value of either $2\pi$ or $-2\pi$ for both.)
We now consider the third case when the loop surrounds only one of $D$ or $E$.  
There is only one direction (clockwise or counterclockwise) that the loop may go around  $D$ while excluding $E$ and then a loop that goes around $E$ while excluding $D$ must go in the opposite direction.
These two cases are homotopy equivalent and we only need consider one of them (or they can be computed directly to show that they give the same limit).
Let's assume that the curve goes around $D$ while excluding $E$.
Then 
\begin{equation}\label{twolimits}
 \lim_{z \to c, z \in \eta_2} g_1^{-1}(z) = e^{\pm 2\pi \delta i} \lim_{z \to c, z \in \eta_1}g_1^{-1}(z),  
 \end{equation}
where there is only one possible choice of plus or minus as determined by the orientation of the loop.
Therefore, there are at most two points satisfying \eqref{sqrtttips} and $L_1$ is either one line segment or a union of two line segments emanating from zero.

In a moment we will determine exactly when we obtain  one-segment and two-segment hulls, but first we pause to prove Lemma \ref{accessiblepoints}.  This will finish off the loose ends from Section 3 and will fully establish Theorem \ref{familyTHM1}(i)-(ii).

\begin{proof}[Proof of Lemma \ref{accessiblepoints}.]
Let $z_0$ be a 1-accessible point of $L_{1,c\sqrt{1-t}}$.  
This means that $z_0$ is added to the left hull at time 1 and there is a curve $\eta \in \mathbb{C} \setminus L_{1,c\sqrt{1-t}}$ ending at $z_0$.  
Thus  $g_1(\eta)$ is a curve in $\mathbb{C} \setminus R_{1,c\sqrt{1-t}}$ ending at $g_1(z_0) = 0$.
This shows that 1-accessible points of $L_{1,c\sqrt{1-t}}$ correspond to prime ends at 0 in $\mathbb{C} \setminus R_{1,c\sqrt{1-t}}$.  By the above discussion and the duality property, $R_{1,c\sqrt{1-t}}$ is either one or two line segments emanating from the origin.  Therefore, there are at most two prime ends at 0 in $\mathbb{C} \setminus R_{1,c\sqrt{1-t}}$ and hence at most two 1-accessible points of $L_{1,c\sqrt{1-t}}$.  Further $\hat{\mathbb{C}} \setminus R_{1,c\sqrt{1-t}}$ is simply connected, proving that $\hat{\mathbb{C}} \setminus L_{1,c\sqrt{1-t}}$ must also be simply connected.
\end{proof}

Now that we have established Theorem \ref{familyTHM1}(i)-(ii), we will use it, along with the duality property, to understand the hulls
 $R_{1,c\sqrt{t}}$ when Re$(\delta) \neq 0$.  This in turn will allow us to determine when we obtain one-segment or two-segment hulls for $L_{1, c\sqrt{t}}$.
Recall that the right hull $R_{1, c\sqrt{t}}$  is a rotation by $i$ of the left hull $L_{1, -ic\sqrt{1-t}}$, with the parameters related by \eqref{parameters}.
Therefore, when Re$ \, \delta(c) >0$, 
the hull $R_{1, c\sqrt{t}}$ is a simple curve with endpoints $D,E$ that contains the point $c$.  In this situation, we can create a  loop in $\left( \mathbb{C} \setminus R_1 \right) \cup \{c\}$ starting and ending at $c$ that contains one of $D,E$ but not the other, and so we obtain two segments for $L_{1, c\sqrt{t}}$. 
When  Re$ \, \delta(c) <0$, the hull $R_{1, c\sqrt{t}}$
is a simple closed curve and its interior.
Since this set contains both points $D$ and $E$,  
 it is not possible to create a loop in $\left( \mathbb{C} \setminus R_1 \right) \cup \{c\}$ starting and ending at $c$ that contains one of $D,E$ but not the other.  
This means that $L_{1, c\sqrt{t}}$ consists of only one line segment.
(Alternately, we could argue using the correspondence described in the proof of Lemma \ref{accessiblepoints}, and the fact that $R_{1, c\sqrt{t}}$ contains two $1$-accessible points when Re$ \, \delta(c) >0$ and one such point when Re$ \, \delta(c) <0$.)

 It remains to look at the case when Re$(\delta)=0$, which we will examine further in the next section.
 When Re$(\delta)=0$, the factor in equation \eqref{twolimits} become $e^{\mp 2\pi \text{Im}(\delta)} $, which is purely real.  In other words, if there are two tips, they must lie along the same ray, and consequently the overall effect is a one-segment hull.

\section{Transitional hulls}

In this section, we discuss the left and right hulls generated by $c\sqrt{1-t}$ for $c$ with Re$(\alpha) =0$, 
but we will keep this discussion brief and omit some details. 
Through the duality property, this also provides an understanding of the left and right hulls generated by $\hat{c} \sqrt{\tau + t}$ when Re$(\delta) = 0$.
Since the reflection property allows us to restrict our attention to  the first quadrant, we define the phase transition set
$$P = \{ c \in \mathbb{C} : \, \text{Re}(\alpha) = 0 \text{ and Arg}(c) \in (0, \pi/2) \}.$$

We begin by considering the question of whether $L_t$ could be a simple curve for all $t<1$ when $c \in P$.  If this were true for all $c$ in an open disc $U$ in the first quadrant that has nonempty intersection with $P$, then the holomorphic motion argument used in Lemma \ref{CurveAtTime1} would apply to show that $\gamma_c(1)$ is well-defined and continuous in $c$.  
However, $\gamma_c(1)$ equals
$A(c)$ when Re$(c) >0$ but equals $B(c)$ when Re$(c) <0$, 
which yields a discontinuity for $\gamma_c(1)$ for $c \in P$.
Therefore $L_t$ is not a simple curve for all $t<1$ for at least a dense set of $c \in P$.  
In fact, one can use a more careful holomorphic motion argument to show that this holds for all $c \in P$.

\begin{figure}
\centering
\includegraphics[scale=0.6]{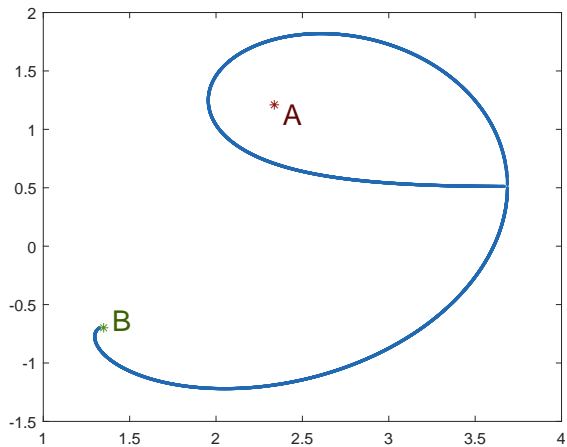}
\caption{The left hull $L_1$ driven by $c\sqrt{1-t}$, where $c \approx  3.687 + 0.511i$ satisfies Re$(\alpha)=0$,
has a lower simple curve and an upper curve that hits back at $c$ prior to time 1.  The points inside the loop are added to the left hull at time 1.
}\label{ReAlphaZeroExample}
\end{figure}

Let $c \in P$ so that $L_t$ is not a simple curve for all $t<1$. 
Then Proposition \ref{simplecurve1} implies that the problem must occur with the upper left hull.
Further from the proof of Proposition \ref{simplecurve1}, we can determine  that there must be some time $t_c< 1$ so that $L_t$ is a simple curve for all $t<t_c$ and $\gamma(t) \to c$ as $t \to t_c$ (and in fact, we can compute $t_c = 1-e^{-4\pi\text{Im}(\alpha)}$).  
In other words, the upper curve of $L_t$ hits back on itself at the point $c$ at time $t_c$.  Thus $L_{t_c}$ is a curve with a loop on top and a simple tail.
Further, the domain for $g_{t_c}$ is no longer connected but has two components.
One can show that the points in the bounded component are captured at time 1,
which implies that
 $\mathbb{C} \setminus L_t$ remains disconnected for $t \in [t_c,1). $
 See Figure \ref{ReAlphaZeroExample} for an image of $L_1$, computed from the implicit solution given in Lemma \ref{ImplicitSolutionLemma}.

Since $\mathbb{C} \setminus L_t$ is disconnected when $t \in [t_c,1)$,
we must also have that its conformal image $\mathbb{C} \setminus R_t$ is also disconnected.
This shows that $R_t$ cannot be a simple curve.
In fact, one can show that for $t \in [t_c, 1)$, the hulls $R_t$  will also consist of a loop and a tail.
As $t \to 1$, the loop of $R_t$ will shrink away, and $R_1$ is a one-segment hull with two tips along the same ray.  The two tips will correspond to the two limits of $g_t(z)$ as $z \to c$ with $z\in \mathbb{C} \setminus L_1$.

\vspace{0.25in}

Department of Mathematics $\cdot$ University of Tennessee $\cdot$ Knoxville, TN 37996


\begin{thebibliography}{abc}

\bibitem[KNK]{KNK}  W.~Kager, B.~Nienhuis, and L.~Kadanoff. Exact solutions for Loewner Evolutions. 
{\it J. Statist. Phys. \bf 115} (2004), 805--822.

\bibitem[L]{L} J.~Lind. 
A sharp condition for the Loewner equation to generate slits. {\it Ann. Acad. Sci. Fenn. Math.  \bf 30} (2005), no 1, 143--158.

\bibitem[MR]{MR} D.E. Marshall, S. Rohde, The Loewner differential equation and slit mappings.
{\it J. Amer. Math. Soc. \bf 18} (2005),  763--778. 

\bibitem[MSS]{MSS} R.~Ma\~n\`e, P.~Sad, and D.~Sullivan.
On the dynamics of rational maps.  {\it Ann. Sci. \`Ecole Norm. Sup. \bf 16} (1983), no 2, 193--217.

\bibitem[Sc] {SLEintro} O.~Schramm, Scaling limits of loop-erased random walks and uniform
              spanning trees.
  {\it Israel J. Math. \bf 118} (2000), 221--288.

\bibitem[Sl]{S} Z.~Slodkowski.
Holomorphic motions and polynomial hulls.  {\it Proc. Amer. Math. Soc.  \bf{111}} (1991), no 2, 347--355.

\bibitem[T]{T} H.~Tran.
Loewner equation driven by complex-valued functions.  arXiv:1707.01023.


\end{thebibliography}
\end{document}